\let\@@pmod\mod
\DeclareRobustCommand{\mod}{\@ifstar\@pmods\@@pmod}
\def\@pmods#1{\mkern4mu({\operator@font mod}\mkern 6mu#1)}
\definecolor{blue}{rgb}{0,0,1}
\definecolor{red}{rgb}{1,0,0}
\definecolor{green}{rgb}{0,.6,.2}
\definecolor{purple}{rgb}{1,0,1}
\long\def\red#1\endred{\textcolor{red}{#1}}
\long\def\blue#1\endblue{\textcolor{blue}{#1}}
\long\def\purple#1\endpurple{\textcolor{purple}{ #1}}
\long\def\green#1\endgreen{\textcolor{green}{#1}}
\newcommand{\sm}{\left(\begin{smallmatrix}}
\newcommand{\esm}{\end{smallmatrix}\right)}
\newcommand{\bpm}{\begin{pmatrix}}
\newcommand{\ebpm}{\end{pmatrix}}
\newcommand{\Z}{\mathbb{Z}}
\let\Re\relax
\DeclareMathOperator{\Re}{Re}
\DeclareMathOperator{\ord}{ord}
\DeclareMathOperator{\SL}{SL}
\newtheorem{theorem}{Theorem}
\newtheorem{lemma}[theorem]{Lemma}
\newtheorem{proposition}[theorem]{Proposition}
\newtheorem{corollary}[theorem]{Corollary}
\theoremstyle{remark}
\newtheorem{remark}[theorem]{Remark}
\newtheorem{example}[theorem]{Example}
\numberwithin{theorem}{section}
\numberwithin{equation}{section}
\title{On the functional equation of twisted Ruelle zeta function and Fried's conjecture
}
\author{Jay Jorgenson}
\author{Min Lee}
\author{Lejla Smajlovic}
\thanks{The second author is supported by a Royal Society University Research Fellowship.}
\begin{document}
\maketitle

\begin{abstract}\noindent
Let $M$ be a finite volume hyperbolic Riemann surface of arbitrary genus, number of cusps,
and elliptic points.  Let $\chi$ be an arbitrary multiplier system associated to $M$,
composed of a unitary representation of the fundamental group of $M$ and an admissible weight.
We develop a functional equation for the product $R(s;\chi)\varphi(s;\chi)$ where $R(s;\chi)$ is
the (twisted by $\chi$) Ruelle zeta function and $\varphi(s;\chi)$ is the determinant of the scattering matrix.  The
factor in the functional equation is expressed solely in terms of the function $\sin(s)$
and its translations, hence is a first-order meromorphic function.
From this expression, we determine the order of the divisor $\ord_R(0)$ of $R(s;\chi)$ at $s=0$ as well
as its lead term in the Laurent expansion, up to $\pm$ sign.  When combined with results of Kitano and of Yamaguchi,
we prove further instances of the Fried conjecture, which asserts in general terms that (higher-dimensional) Reidemeister
torsion is equal to $\vert R(0;\chi)\vert^{\pm 1}$, depending on certain
normalizations.  More specifically, in \cite{Ki96} Kitano evaluates Reidemeister torsion for certain Seifert fibered
spaces associated to $\SL(n,\mathbb{C})$ irreducible acyclic representations $\chi$;  we show that
one has the same value for $\vert R(0;\chi)\vert^{-1}$.  In \cite{Yamaguchi17} Yamaguchi computes higher-dimensional
Reidermeister torsion for certain Seifert fibered spaces and even dimensional non-unitary representations of the fundamental group,
and then proves in \cite{Yamaguchi22} that the Fried conjecture holds for some of the cases considered in \cite{Yamaguchi17}.
Our computations complete the proof of the Fried conjecture for all settings addressed in \cite{Yamaguchi17}
and provides an alternate proof of the main result in \cite{Yamaguchi22}.
Finally, we compute the limit $\lim_{s\to 0}|s^{-\ord_R(0)}R(s;\chi)|$ in some cases when $M$ is the non-compact quotient of the hyperbolic upper
half plane by a congruence group and $\chi$ is a Dirichlet character. We show that this number can be expressed in terms of special values of Dirichlet $L$-functions and other number-theoretic quantities.
\end{abstract}

\section{Introduction}

\subsection{R-torsion and analytic torsion}
Reidemeister torsion or Reidemeister-Franz torsion, often simply referred to as R-torsion, is a topological invariant
of unquestionable significance.  The R-torsion of a $3$-manifold $M$, which is denoted by $\tau(M)$,
was first defined in
\cite{Re35} where $\tau(M)$ was used to prove the existence of
certain lens spaces which are homotopically equivalent but not homeomorphic.
It has been stated that \cite{Re35} was the historical beginning of geometric topology.

The subsequent development and study of R-torsion is itself a vast area of mathematics, including
higher dimensional definitions of $\tau(M)$ beginning with Franz \cite{Fr35} and de Rham \cite{deR64}.
Of the many aspects and uses of R-torsion perhaps none are more striking than
its presence in Milnor's counterexamples for the \it Hauptvermutung; \rm see \cite{Mi61} as well
as \cite{Co73}, \cite{Ra01} and \cite{Ra96}.

Let $M$ be a compact, smooth Riemannian manifold.  In \cite{RS71}, Ray and Singer showed that R-torsion $\tau(M)$ associated to a triangulation $\mathit{Tri}(M)$ of $M$ can be realized as a special value of a certain
linear combination of graph-theoretic spectral zeta functions associated to combinatorical
Laplacians corresponding to $\mathit{Tri}(M)$; see also section 6 of
of \cite{Mu78}.  Going further, Ray and Singer defined a spectral-theoretic invariant $T(M)$
associated to the Riemannian manifold $M$, which they called
as \it analytic torsion\rm, by using a similarly defined linear combination of spectral zeta functions associated
to the Riemannian Laplacian on $M$.  Furthermore, Ray and Singer conjectured that
$\tau(M) = T(M)$, meaning that the combinatorial Laplacian invariant $\tau(M)$ is equal
to the Riemannian Laplacian invariant $T(M)$.   The proof of this
conjecture was given by Cheeger \cite{Ch79} and by M\"uller \cite{Mu78}, and the result that
$\tau(M) = T(M)$ is known as the Cheeger-M\"uller
Theorem.

Of course, the aforementioned articles defined and studied R-torsion and analytic torsion in its nascent form.  In subsequent
years, many authors have obtained and developed vast generalizations of R-torsion and analytic torsion, and in
many instances the analogue of the Cheeger-M\"uller theorem has been shown to be true. Furthermore, analytic
torsion has played a role in other mathematical fields, such as algebraic geometry and arithmetic geometry.
We refer to the monograph \cite{CFM16} for further reading and guide to the existing literature.

As a separate remark, let us point out that in
\cite{RS73} the authors defined another invariant which they also called ``analytic torsion'', this time associated to
a complex Hermitian manifold.  The careful discussion given in
\cite{DY22} suggest that the invariant defined in \cite{RS73} perhaps should be
called \it holomorphic torsion \rm in order to distinguish from the analytic torsion from \cite{RS71};
for now, let us use simply use the phrase ``holomorphic version of analytic torsion'' in referring to the
invariant from \cite{RS73}.
With this terminology, we note that holomorphic version of analytic torsion from \cite{RS73} is defined through special values of spectral
zeta functions in analogy
with analytic torsion from \cite{RS71}.  The holomorphic version of analytic torsion has been used to construct modular forms on certain moduli
spaces and is a vital ingredient in Arakelov theory, specifically in the definition of Quillen metrics.

\subsection{Ruelle zeta function and the Fried conjecture}
For simplicity, let us now assume that $M$ is a compact and oriented hyperbolic manifold, which is the setting considered in
\cite{Fr86b}. Let $\chi$ be a representation of the fundamental group $\pi_{1}(M)$ of $M$.
In this context, there is a Ruelle zeta function which is defined for $\text{\rm Re}(s)$ sufficiently large
by
$$
R(s;\chi) := \prod\limits_{\gamma}\text{\rm det}\left(I - \chi(\gamma)e^{-s \ell(\gamma)}\right),
$$
where the product is over inequivalent classes of prime and closed hyperbolic classes of the fundamental group of $M$
and $\ell(\gamma)$ is the length of the shortest closed geodesic in the homotopy class determined by $\gamma$.
With the data $M$ and $\chi$, one has a well-defined R-torsion $\tau(M,\chi)$ and analytic torsion $T(M,\chi)$, and the analogous
Cheeger-M\"uller theorem has been proved, namely that $\tau(M,\chi) = T(M,\chi)$; see, for example, \cite{Mu93}
and references therein as well as the original articles
\cite{Ch79} and \cite{Mu78}.
The main result in \cite{Fr86b} is that the analytic torsion $T(M,\chi)$ of $M$ and $\chi$,
hence the R-torsion $\tau(M,\chi)$ via the Cheeger-M\"uller theorem, is equal to $\vert R(0;\chi)\vert$, or its inverse
depending on the dimension of $M$.  In other words, the topological or graph-theoretic invariant $\tau(M,\chi)$, which is known
to equal the spectral invariant $T(M,\chi)$, is given succinctly in terms of the length-spectral invariant $\vert R(0;\chi)\vert$.

At this time, one generally uses the term \it Fried conjecture \rm to mean an assertion that analytic torsion $T(M)$, or one of its many generalizations,
is easily expressed in terms of a special value of a corresponding Ruelle  zeta function $R(s)$, often when $s=0$.

By their definition, spectral zeta functions are like Dirichlet series by which they are expressed as convergent
series in a right-half-plane.  However, Ruelle and Selberg zeta functions are very akin to number theoretic zeta functions
because they can be defined as a type of Euler product in a right-half-plane.  With this in mind, one can view the results
from \cite{Fr86b} as connecting R-torsion to special values of a type of number theoretic zeta function, namely $R(s;\chi)$.

On the other hand, in \cite{KN22}, the R-torsion is proved to be an algebraic integer for some special 3-manifolds. Thus, Fried's conjecture in this context implies that the special value of the Ruelle zeta function at $s=0$  is an algebraic integer.

\subsection{The purpose of this article}
In this article we study the Ruelle zeta function $R(s;\chi)$ associated to a finite volume hyperbolic
Riemann orbifold $M$ associated to an arbitrary unitary representation $\chi$ and general multiplier
system.   When combined with computations by Kitano \cite{Ki96} and by Yamaguchi \cite{Yamaguchi17} we prove
new instances of the Fried conjecture, in particular an extension of the results from \cite{Yamaguchi22}.
In further detail, our main results are as follows.

First we derive a functional equation for the Ruelle zeta function.  Let $\varphi(s;\chi)$ be the
determinant of the scattering matrix associated to the situation in hand; if $M$ is co-compact, then take
$\varphi(s;\chi) \equiv 1$.   Then for all $s \in \mathbb{C}$, we have that
\[R(s;\chi)\varphi(s;\chi) = \left(R(-s;\chi)\varphi(-s;\chi)\right)^{-1}H(s)\]
where $H(s)$ is an order one meromorphic function which is explicitly and completely given in terms of
the trigonometric function $\sin(s)$ and its translations; see Proposition \ref{prop: funct eq R zeta}.
As a corollary, in the instance when $M$ is co-compact, we derive the following functional equation relating the Ruelle zeta functions twisted by the multiplier system $\chi_{k}$ and $\chi_{-k}=\overline{\chi_k}$ of admissible weights $2k$ and $-2k$ respectively:
\[R(s;\chi_{-k})R(-s;\chi_{-k})=R(s;\chi_{k})R(-s;\chi_{k}).\]
Fried in \cite{Fr86c} studied the functional equation of the Ruelle zeta function $R(s; \chi)$ in the setting when $M$ is compact
and $\chi$ is unitary; however, the analysis is complicated by the classical though somewhat intractable
form of the functional equation of the Selberg zeta function.  More recent considerations of the
functional equation of $R(s;\chi)$ include results from \cite{FS23}, which assumes that $M$ is compact,
and \cite{Teo20}, which assumes that $\chi$ is trivial.  In both \cite{FS23} and \cite{Teo20}, the authors
prove a form of the functional equation which, as in Proposition \ref{prop: funct eq R zeta} below,
expresses $H(s)$ in terms of the trigonometric function $\sin(s)$, thus showing that $H(s)$ has order one.
Our method of proof employs various computations from  regularized product
expressions for the Selberg zeta function.

Second, we determine the order of the divisor of $R(s;\chi)$ at $s=0$.  We
prove that the order depends on the topological signature of $M$, the weight $2k$ of the multiplier
system, and data associated to the representation $\chi$.  In particular, if the weight is an even integer, meaning
$k \in \mathbb{Z}$, then the order of $R(s;\chi)$ at $s=0$, which we denote by $\ord_R(0;\chi)$, is given by
$$
\ord_R(0;\chi)= m(2g-2+\rho+\tau) - \tau_0-\tilde{\tau}_0-n_0,
$$
where $\tau_0$ and
$\tilde{\tau}_0$ are parabolic and elliptic orders of singularity of $\chi$, respectively, and $n_0$ is the order of divisor at $s=0$ of
the Dirichlet series portion of the scattering determinant.  We refer to Sections \ref{sec:notation},  \ref{sec: mult systems} and
\ref{ss:scattering} below for precise definitions and notation.
When the weight is not an even integer, meaning $k \notin \mathbb{Z}$, then the order of divisor of $R(s;\chi)$ at $s=0$ is $-n_0$.
In particular, if $M$
is compact and $k\notin \mathbb{Z}$, then $R(s;\chi)$ is non-vanishing at $s=0$.  These results are given
in Theorem \ref{thm:Ruelle_vanishing_constant}.

Third, we compute the coefficient of the lead term in the expansion of $R^{2}(s;\chi)$ at $s=0$; see Theorem
\ref{thm:Ruelle_vanishing_constant}.  Indeed, this gives the coefficient of the lead term of $R(s;\chi)$ up to
sign.  For our purposes, this ambiguity of sign does not play a role since we next turn to the Fried conjecture
which itself involves $\vert R(0;\chi)\vert$, or more generally the absolute value of the coefficient of the lead term of
$R(s;\chi)$ at $s=0$.  Let us note that an
ambiguity of sign has been encountered elsewhere in the context of our study.
Specifically, in the introduction to \cite{Tu01}, the
author points out that R-torsion $\tau(M)$ has a ``well-known indeterminacy'' which includes a multiplicative
factor of $\pm 1$; see \cite[p.viii]{Tu01}.
As further stated in \cite{Tu01}, the sign determinacy led Tureav to define new
combinatorial structures on manifolds from which he defined other types of torsion; see \cite[chapter 18]{Tu01},
which evidentally goes beyond the topics under consideration in this article.

Fourth, we use our results and prove further instances of the Fried conjecture.  We consider two examples
where R-torsion has been computed.  In \cite{Ki94} and \cite{Ki96}, Kitano evaluates R-torsion
for a Seibert fibered space over a Riemann surface $M$ and an acyclic irreducible representation $\tilde{\rho}$
of $\pi_{1}(M)$ into $\mathrm{SL}(n,\mathbb{C})$.
From this data, we associate a naturally defined unitary multiplier system $\chi$ with non-zero weight and show that
$\tau(M;\tilde{\rho}) = \vert R(0;\chi)\vert ^{-1}$, thus proving the Fried conjecture; see Theorem \ref{thm: irreducible rep} below.
In \cite{Yamaguchi17}, Yamaguchi computes higher-dimensional R-torsion $\tau(M;\rho_{2N})$
for a Seifert fibered space and a certain acyclic representation $\rho_{2N}=\sigma_{2N} \circ \tilde{\rho}$ of dimension $2N$;
see \cite[Proposition 4.8]{Yamaguchi17}.  Associated to the data from \cite{Yamaguchi17}, we construct a weight one
multiplier system $\chi$ such tnat $\tau(M;\rho_{2N}) =\vert R(0;\chi)\vert$, thus extending the main result of \cite{Yamaguchi22}
which considers certain parametric values of $\rho_{2N}$.

Finally, we consider some explicit evaluations of the lead term in the Laurent expansion of $R(s;\chi)$ at $s=0$, in the case when $M$ is not compact.
In particular, we consider $M$ realized as the quotient of the upper half plane by the
congruence subgroup $\Gamma_{0}(N)$ and the multiplier system has weight zero and is associated to a
Dirichlet character modulo $N$.  As predicted by Fried, the absolute value of this lead term is given in terms of special values of arithmetic $L$-functions; see \cite[p. 162]{Fr86c}.

\subsection{Comparison to some recent results}

As stated, there are many fascinating results involving R-torsion, Ruelle zeta functions, the Fried
conjecture, and related fields.  Let us mention here just a few  recent results which
are related to our study.

In \cite{GP10} the authors studied the Ruelle and Selberg zeta functions for finite volume quotients
of hyperbolic manifolds without elliptic fixed points.  They proved that the Ruelle zeta function admits
a functional equation, and they evaluated the order of the divisor at $s=0$; see Theorem 1.1 and Theorem 1.2.
In this setting, the evaluation of the coefficient of the lead term in the Laurent expansion at $s=0$
was determined in \cite{Pa09}, and thus the Fried conjecture was proved.
These results were extended to non-unitary twists and compact surfaces by Spilioti in \cite{Sp18}
and \cite{Sp20} and Frahm and Spilioti in \cite{FS23}; see also
\cite{Go97}, \cite{GP08}, \cite{GP10}, \cite{Mul12}, \cite{Pa19} and \cite{CDDP22}
in the setting of (higher-dimensional) hyperbolic manifolds.

In \cite{Teo20} the author studies the Ruelle zeta function for any finite volume hyperbolic Riemann
surface in the case that the representation of $\pi_{1}(M)$  is trivial.  Teo proves a functional equation
of $R(s)$
and evaluates the divisor and leading coefficient at $s=0$; see Theorem 3.1 and Theorem 3.2 of \cite{Teo20}.
Though Teo considers cofinite hyperbolic Riemann surfaces with cusps and ramification points,
they only consider the case when $\chi$ is the trivial one-dimensional representation.

When the Riemann surface is negatively curved, but the curvature is not necessarily constant,
it is proved in \cite{DZ17} that its Ruelle zeta function vanishes at zero to the order given by the
absolute value of the Euler characteristic.  The surfaces considered in \cite{DZ17} are compact
and without elliptic fixed points.

Fried's conjecture for closed locally symmetric manifolds of nonpositive sectional curvature was proved in \cite{MS91}. In \cite{Sh18} the author proves the Fried conjecture for an arbitrary
acyclic and unitarily flat vector bundle on a closed locally symmetric reductive manifold such that the quotient
space is compact and odd dimensional.  Additional results towards a general Fried conjecture when $M$ is compact were recently proved in \cite{FS23}, \cite{Mu21} and \cite{Sh21}, see also \cite{DGRS20} for the first examples of geodesic flows in variable negative curvature where the Fried conjecture holds true. We refer an interested reader to the recent survey \cite{Shen21} (and the references therein) where the results on the Fried conjecture were described from the dynamical point of view.

In summary, there are a several recent publications related to the study of R-torsion and the Fried
conjecture for hyperbolic Riemann surfaces, and more generally quotients of symmetric spaces.
The presence of ramification points presents a significant technical difficulty in the study of the Ruelle
zeta function, its functional equation, and its lead asymptotic behavior near $s=0$.  The study undertaken in this article
focuses on these questions in the general setting of arbitrary finite volume hyperbolic Riemann surfaces, and arbitrary (admissible) weight unitary multiplier systems.

\subsection{Outline of the article}
In section \ref{sec: prelim} we establish notation and summarize existing results from the mathematical literature.
In particular, we recall results from \cite{Gong95} which express Selberg's zeta function as a regularized
determinant.
In section \ref{s:Rzeta} we obtain the functional equation of the Ruelle zeta function in Proposition
\ref{prop: funct eq R zeta} and then study the its divisor at $s=0$; see Theorem \ref{thm:Ruelle_vanishing_constant}.
In section \ref{sec: Kitano rep} we combine our results with those from \cite{Ki96} and prove further instances of the
Fried conjecture for R-torsion.
In section \ref{sec: Yam appl} we complete the results from \cite{Yamaguchi22} and
prove the Fried conjecture for the cases of higher R-torsion which were computed in \cite{Yamaguchi17}.
Finally, in section \ref{s:congruence}, we explicitly evaluate the lead coefficient of $R(s,\chi)$ for certain arithmetically
interesting examples.
As stated above, the computations in section \ref{s:congruence} confirm Fried's assertion from \cite[p. 162]{Fr86c} that the special value of the Ruelle zeta function at $s=0$ is
related to $L$-functions.

\section{Preliminaries}\label{sec: prelim}

\subsection{Basic notation}\label{sec:notation}

Let $\Gamma< {\rm SL}_2(\mathbb{R})$ be a discrete subgroup acting totally discontinuously on the upper half plane $\mathbb{H}=\{x+iy:\, x, y\in\mathbb{R},\, y>0\}$ such that $-I_2\in \Gamma$, where $I_2$ is the identity matrix.
The group $\Gamma$ can contain hyperbolic, elliptic  or parabolic elements.
Let $\mathcal{F}$ be a (Ford) fundamental domain for the action of $\Gamma$ on $\mathbb{H}$.
We further assume that the hyperbolic area of $\mathcal{F}$ is finite, meaning that $\Gamma$ is Fuchsian group of the first kind.
Let $\overline{\Gamma}$ be the projection of $\Gamma$ into $\mathrm{PSL}(2,\mathbb{R})$ and
$M$ be the Riemman surface associated to the quotient
space $\overline{\Gamma}\backslash\mathbb{H}$. Let $g$ be the genus of $\overline{\Gamma}\backslash\mathbb{H}$ \footnote{We will also say that $g$ is a genus of $\Gamma$} and assume it possesses $\tau$ and $\rho$ distinct parabolic cusps and
elliptic points, respectively. Let $\{\nu_{j}\}$, with $j=1,\cdots, \rho$, signify the orders of the
elliptic points.  With all this,
the hyperbolic area of  $\mathcal{F}$ is
\begin{equation}\label{eq: volume of F}
\omega(\mathcal{F}) = 2\pi\bigg(2g-2 +\sum_{j=1}^\rho \left(1-\frac{1}{\nu_j}\right)+\tau\bigg).
\end{equation}

If the number of cusps $\tau$  is positive, then for each cusp $\zeta\in\mathbb{R} \cup \{\infty\}$
its stabilizer is an infinite abelian group which is generated by $-I_2$ and a matrix $T$; the matrix $T$ is
conjugate  in $\mathrm{SL}(2,\mathbb{R})$ to the translation $U=\sm 1 & 1\\ 0 & 1\esm$.
Let us choose a complete system of representatives of $\Gamma$-equivalent classes of cusps of $\Gamma$ and matrices
$\{S_1,S_2,\dots,S_\tau \}$ which, together with $-I_2$, generate the stabilizers of the chosen cusps.
We assume that the fundamental domain $\mathcal{F}$
associated to this choice has nice geometric properties, as described in \cite[Remark 1.2.2]{Fischer}.
We say that $\{S_1,S_2,\dots,S_\tau \}$ are parabolic representatives for the cusps of $\Gamma$.

Let $\rho$ be the number of $\Gamma$-conjugacy classes of elliptic elements.
For each elliptic element $R\in \Gamma$, the centralizer ${\rm Z}(R)=\left\{\gamma\in \Gamma: \gamma R = R\gamma\right\}$ of $R$ in $\Gamma$ is a finite cyclic subgroup of order $2\nu$ for some integer $\nu\in \mathbb{Z}_{\geq 2}$.
The centralizer ${\rm Z}(R)$ is generated by an element $\widetilde{R}\in {\rm Z}(R)$, called primitive,
which is conjugate in $\mathrm{SL}(2,\mathbb{R})$  to the rotation
\begin{equation}\label{e:theta_nu_ellpitic}
\theta_{\frac{\pi}{\nu}}  = \bpm \cos\frac{\pi}{\nu} & -\sin\frac{\pi}{\nu} \\ \sin\frac{\pi}{\nu} & \cos\frac{\pi}{\nu}\ebpm.
\end{equation}
The number $\nu$ depends only on the $\Gamma$-conjugacy class of $R$.
Let $\{R_1, \cdots, R_{\rho}\}$ denote a complete set of primitive $\Gamma$-conjugacy classes of
elliptic elements such that each $R_j$ is conjugate in $\mathrm{SL}(2,\mathbb{R})$  to some $\theta_{\frac{\pi}{\nu_{j}}}$ for
all $j=1, \ldots, \rho$.

\subsection{Unitary multiplier systems}\label{sec: mult systems}

For a positive integer $m$ we fix a $m\times m$ unitary multiplier system $\chi$ of dimension $m$ and arbitrary admissible real weight $2k$.
By this we mean a  map from $\Gamma$ to the ring $\mathcal{U}(V)$ of unitary endomorphisms of an $m$-dimensional Hermitian vector
space $V$ which satisfies the following properties.  First,
\begin{equation}\label{eq: multsysprop1}
\chi(-I_2)=e^{-2\pi i k} I_V
\end{equation}
where $I_{V}$ is the identity element in $V$.  Second, for
any $\gamma$ and $\eta$ in $\Gamma$,  we have that
\begin{equation}\label{eq: multsysprop2}
\chi(\gamma\eta) =\sigma_{2k}(\gamma,\eta)\chi(\gamma)\chi(\eta),
\end{equation}
where $\sigma_{2k}$ is a weight $2k$ factor system; see \cite[Section 1.3]{Fischer}.
The weight $2k$ is \emph{admissible} if there exists a multiplier system with weight $2k$.

When $\tau=0$, meaning that $M$ is compact, then  the admissible values of $k$ are the real numbers
which lie in the set
\[A_m(\Gamma):=\frac{1}{m} \frac{2\pi}{\omega(\mathcal{F})}\frac{1}{\mathrm{l.c.m.}[\nu_1,\ldots,\nu_{\rho}]} \mathbb{Z};\]
see \cite[Proposition 1.3.6]{Fischer} or \cite[Proposition 2.3, p. 335]{Hejhal83}.
Note that in \cite{Hejhal83}  Hejhal uses $m$ to denote what we signify by $2k$.  If $\Gamma$ has no elliptic elements,
then $\rho=0$ and the term $\mathrm{l.c.m.}[\nu_1,\ldots,\nu_{\rho}]$ is set to equal $1$.
In view of \eqref{eq: volume of F}, for any $a\in \Z$ one has $\frac{a}{m}\in A_m(\Gamma)$, meaning $2\frac{a}{m}$ is an admissible weight for a unitary multiplier system of dimension $m$.

When $\tau\geq 1$, then from \cite[Proposition 2.2, p. 334]{Hejhal83} we have that all real values of $k$ are admissible.

\subsubsection{Action on parabolic representatives}\label{sss:parabolic}

 Assume that $\tau$, the number of cusps, is positive.
As above, let
$\{S_1,S_2,\dots,S_\tau \}$ denote a complete set of parabolic representatives for the cusps of $\Gamma$.
For each $j = 1\dots \tau,$ set
 $$
 V_j = \{ v \in V ~|~ \chi(S_j) v = v \}.
 $$
 Let $m_j = \dim(V_j)$, and define the \emph{parabolic degree of singularity} of $\chi$ to be
 $$
 \tau_0 = \sum\limits_{j=1}^{\tau} m_j.
 $$
 If $\tau_0 = 0$,  we say that $\chi$ is \emph{regular} otherwise $\chi$ we say that $\chi$ is \emph{singular};
 see \cite[Section~1.5 p.28]{Fischer}.  We also note that this use of the term \emph{singular} originates
 with Selberg; see \cite{Sel89} and references therein.

For each  $S_j \in \{S_1,S_2,\dots,S_\tau \},$  let $\eta_{j1},\ldots, \eta_{jm}$ be eigenvalues of $\chi(S_j)$ counted with multiplicity.
We can write
$$\eta_{jp} = e^{2\pi i \beta_{jp}}, $$
where we may assume, without loss of generality, that $\beta_{jp} = 0$ for $1 \leq p \leq m_j,$ and $\beta_{jp} \in (0,1) $ for $m_j < p \leq m$,
see \cite[Section~1.5 p.30]{Fischer}.  With this assumption, and
for any $j \in \{1,\dots,\tau \}$, we set
\[\beta_j= \sum_{p=1}^{m} \beta_{jp} = \sum_{p = m_j + 1}^m \beta_{jp}.\]
So $\beta_j=0$ when $m_j=m$.

\subsubsection{Action on primitive elliptic elements}\label{sss:elliptic}

There are $\rho$ distinct $\Gamma$-inconjugate elliptic subgroups of $\Gamma$.  If $\rho = 0$, then $M$ is smooth,
so for the remainder of this section let us assume $\rho \geq 1$.
For each $j=1,\ldots, \rho$ the multiplier system $\chi$ is such that
$$
\chi(R_j)^{\nu_j} =e^{-2\pi i k} I_V.
$$
Hence, $\chi(R_j)$ has $m$ eigenvalues which are necessarily of the form $\exp\left(-\frac{2\pi i}{\nu_j}(k+\alpha_{jp})\right)$,
where $\alpha_{jp}\in\{0,\ldots,\nu_j-1\}$ and $p=1,\ldots, m$; see \cite[p. 66]{Fischer}.

Let $E_j=\{v\in V: \chi(R_j)v=v\}$ and let $r_j=\mathrm{dim}(E_j)$. The \emph{elliptic degree of singularity} of $\chi$ is defined to be
$$
\tilde{\tau}_0:= \sum\limits_{j=1}^{\rho}r_j.
$$
If the value $k$ is not an integer, since no eigenvalue of $\chi_j(R)$ can be equal to one, we set $\tilde{\tau}_0=0$.

For a non-negative integer $\ell$, we further define quantities
$$
\alpha_j(\ell)=\sum_{p=1}^m \alpha_{jp}(\ell)
\,\,\,\,\,
\text{\rm and}
\,\,\,\,\, \tilde{\alpha}_j(\ell)=\sum_{p=1}^m \tilde{\alpha}_{jp}(\ell),
$$
where $\alpha_{jp}(\ell)$, respectively $\tilde{\alpha}_{jp}(\ell)$, is the residue of $\alpha_{jp}+\ell$, respectively
$-\alpha_{jp}+\ell$, modulo $\nu_j$ with $j=1,\ldots, \rho$.
For any $j\in\{1,\ldots,\rho\}$ and $\ell\in\{0,\ldots,\nu_j-1\}$, let $r_j(\ell)$ denote the number of values $\alpha_{jp}\in\{0,\ldots,\nu_j-1\}$ such that $\alpha_{jp}+\ell \equiv 0 \,(\mathrm{mod}\, \nu_j)$. Then $\sum\limits_{\ell=0}^{\nu_j-1} r_j(\ell) =m$ for each $j\in\{1,\ldots,\rho\}$.
We will extend $r_j(\ell)$ to all non-negative integers $\ell$ by imposing the condition that $r_{j}(\ell)$ as a function in $\ell$
is periodic with period $\nu_j$.  In particular, we have $r_j(\nu_j)=r_j(0)$.
Note that when $k\in \mathbb{Z}$, we then have that
\[r_j(k) = \#\left\{\alpha_{jp} :\, p\in \{1, \ldots, m\}, \, \alpha_{jp}+k\equiv 0\bmod{\nu_j}\right\}= \dim E_j = r_j.\]
So then
\begin{equation}\label{e:sum_rjk_int}
\sum_{j=1}^{\rho} r_j(k) = \sum_{j=1}^{\rho} r_j=\tilde{\tau}_0.
\end{equation}
Furthermore, for $j\in \{1, \ldots, \rho\}$, and by the definition of $r_j(\ell)$,  we observe that
\begin{equation}\label{eq: r(l) identity}
\frac{\alpha_j(\ell)-\alpha_j(\ell-1)}{\nu_j} = \frac{m}{\nu_j}-r_j(\ell).
\end{equation}

\begin{lemma}\label{lem: existence of mult system}
Let $\Gamma\subset \mathrm{SL}(2,\mathbb{R})$ be a Fuchsian group of the first kind of genus $g\geq 1 $ which does not contain parabolic elements and such that $-I_2\in\Gamma$.  Assume that $\Gamma$ contains $\rho\geq 1$ classes of elliptic elements,
with representatives $R_1,\ldots,R_\rho$ of orders $\nu_1,\ldots,\nu_\rho$ respectively.
Choose any positive integer  $m$ and integer $a\in\{1,\ldots,m\}$ such that $\gcd(a, m)=1$.  Then
there exists an $m\times m$ unitary multiplier system $\chi$ on $\Gamma$ with weight $2k=2\frac{a}{m}$.
Moreover, for any set of integers $\alpha_{jp}\in\{0,\ldots,\nu_j-1\}$, $p=1,\ldots,m$ and $j=1,\ldots,\rho$,
one can construct such a multiplier system $\chi$ so that eigenvalues of
$\chi(R_j)$ are $\lambda_{jp}:= \exp\left(-\frac{2\pi i}{\nu_j}(k+\alpha_{jp})\right)$ for all $p=1,\ldots, m$.
\end{lemma}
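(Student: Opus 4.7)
The plan is to use the standard finite presentation of the co-compact Fuchsian group $\Gamma$ and define $\chi$ directly on a generating set, then verify that the defining relations are preserved modulo the factor system $\sigma_{2k}$. Under the stated hypotheses ($g\geq 1$, no parabolics, $\rho\geq 1$ elliptic classes, $-I_2\in\Gamma$), $\Gamma$ admits a presentation with hyperbolic generators $A_1,B_1,\ldots,A_g,B_g$ and elliptic generators $R_1,\ldots,R_\rho$, subject to $R_j^{\nu_j}=-I_2$ and the long relation $\prod_{i=1}^g[A_i,B_i]\prod_{j=1}^\rho R_j = (-I_2)^\epsilon$ for a specific $\epsilon\in\{0,1\}$ determined by the lift of $\overline\Gamma$ from $\mathrm{PSL}_2(\mathbb{R})$.

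First, I would set $\chi(-I_2):=e^{-2\pi i k}I_V$ and $\chi(R_j):=\mathrm{diag}(\lambda_{j1},\ldots,\lambda_{jm})$ with the prescribed eigenvalues. Since each $\alpha_{jp}\in\mathbb{Z}$, a direct computation gives $\chi(R_j)^{\nu_j}=e^{-2\pi i k}I_V=\chi(-I_2)$, so the elliptic relations are compatible with \eqref{eq: multsysprop1}--\eqref{eq: multsysprop2} provided the accumulated factor-system value $\prod_{\ell=1}^{\nu_j-1}\sigma_{2k}(R_j^\ell,R_j)$ equals $1$. This last identity follows from an explicit computation of $\sigma_{2k}$ on the rotation $\theta_{\pi/\nu_j}$ to which $R_j$ is conjugate, using the standard formula $\sigma_{2k}(\gamma,\eta)=j(\gamma,\eta z)^{2k}j(\eta,z)^{2k}j(\gamma\eta,z)^{-2k}$. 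It remains to choose $\chi(A_i),\chi(B_i)$ so that the long relation is satisfied, which reduces to solving $\prod_{i=1}^g[\chi(A_i),\chi(B_i)]=P$ for an explicit unitary $P$ built from $\prod_j\chi(R_j)$, $\chi(-I_2)^\epsilon$, and the cumulative factor-system value $\sigma_{\mathrm{long}}$ along the long word.

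Since any product of commutators has determinant $1$, the decisive step is to verify $\det P=1$. A direct computation expresses $\det P$ as a phase involving $mk\epsilon$, $k\sum_j 1/\nu_j$, $\sum_{j,p}\alpha_{jp}/\nu_j$, and $\sigma_{\mathrm{long}}^{-m}$; invoking the area formula \eqref{eq: volume of F}, the integrality of each $\alpha_{jp}$, and the admissibility $2a/m\in A_m(\Gamma)$, this phase collapses to $1$ (raising to the $m$-th power in the determinant clears the denominators $\nu_j$ via the lcm appearing in $A_m(\Gamma)$). Once $\det P=1$, I would realize $P\in\mathrm{SU}(m)$ as the single commutator $[\chi(A_1),\chi(B_1)]$ using surjectivity of the commutator map on the compact connected simple Lie group $\mathrm{SU}(m)$ (Got\^o's theorem, $m\geq 2$), and set $\chi(A_i)=\chi(B_i)=I_V$ for $i\geq 2$. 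The main technical obstacle is the bookkeeping of $\sigma_{2k}$ along the long relation and verifying that the resulting determinant identity is precisely encoded by the admissibility; the scalar case $m=1$ specializes to the same identity, checked as a direct compatibility.
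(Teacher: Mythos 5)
Your overall strategy is the same as the paper's: take the standard presentation of the co-compact Fuchsian group, define $\chi$ on the elliptic generators by the prescribed diagonal matrices, and then choose the images of the hyperbolic generators so that the single long relation holds up to the factor system. Where the paper simply asserts that the images of $A_i,B_i$ ``can be chosen'' appropriately, you make the step precise: the long relation reduces to solving $\prod_i[\chi(A_i),\chi(B_i)]=P$ for an explicit unitary $P$, a product of commutators in $U(m)$ always has determinant one, and conversely every element of $\mathrm{SU}(m)$ is a commutator of unitaries (Got\^o), so solvability is exactly the condition $\det P=1$. This is a genuine sharpening of the paper's one-line argument and correctly isolates the only obstruction.

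The gap is in the claim that this obstruction vanishes. Writing $\det P$ out, the factor-system contribution $\sigma_{\mathrm{long}}^{-m}$, the term coming from $\chi(-I_2)^{\epsilon}$ and the $k$-dependent phases $e^{-2\pi i mk/\nu_j}$ are all fixed once $\Gamma$, $m$ and $a$ are fixed, but $\prod_j\det\chi(R_j)$ contributes the additional phase $\exp\bigl(-2\pi i\sum_{j}\sum_{p}\alpha_{jp}/\nu_j\bigr)$; this phase is \emph{not} multiplied by $m$ (the sum over $p$ has already been taken) and is not trivial in general: as the $\alpha_{jp}$ range over $\{0,\dots,\nu_j-1\}$ it sweeps out every $\mathrm{lcm}(\nu_1,\dots,\nu_\rho)$-th root of unity while everything else in $\det P$ stays put. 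Hence $\det P=1$ is a genuine congruence condition on $\sum_{j,p}\alpha_{jp}/\nu_j\bmod 1$, satisfied by some prescriptions of the eigenvalues and violated by others; admissibility of the weight $2a/m$ only guarantees that \emph{some} choice works, so the step ``this phase collapses to $1$'' fails for arbitrary $\alpha_{jp}$. (A minimal check: for signature $(1;2)$, $m=1$, $k=1$, one has $[\chi(A_1),\chi(B_1)]=1$ identically, so $\chi(R_1)$ is completely forced and only one of the two values $\alpha_{11}\in\{0,1\}$ is realizable.) To be fair, the paper's own proof silently passes over the same point, and in the paper's applications the $\alpha_{jp}$ come from an actual $\mathrm{SL}(m,\mathbb{C})$-representation of $\pi_1(X)$, for which the determinant congruence holds automatically; your write-up, by making the obstruction explicit, is precisely where one sees that the ``for any set of integers $\alpha_{jp}$'' clause requires this extra compatibility condition.
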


\begin{proof}
As discussed above, the weight $k=\frac{a}{m}$ is admissible.
Any unitary $m$-dimensional representation $\tilde{\chi}$ on $\Gamma$ with eigenvalues
for the action at $R_j$ equal to $\lambda_{jp}$ with $p=1,\ldots,m$ is such that
$\tilde{\chi}(R_j)^{\nu_j}=\exp(-2\pi i k) I_m$. This condition is precisely
what is needed for the action of any multiplier system on elliptic elements.
Recall that the group $\Gamma$ possesses the single relation that
$$
\Gamma \cong \langle A_i, B_i, R_j:\, \prod _{i=1}^{g} [A_i,B_i] \prod_{j=1}^{\rho}R_j =I_2,\, R_j^{\nu_j}=-I_2  \rangle,
$$
where $A_i, B_i$, $i=1,\ldots,g$  are representatives of hyperbolic classes and $[A_i, B_i]=A_iB_iA_i^{-1}B_i^{-1}$ is the commutator.
Therefore, we can define a mapping $\tilde{\chi}: \Gamma \to \mathrm{SL}(m,\mathbb{C})$ such that  $\tilde{\chi}(R_j)=\Lambda_j I_m$, where $\Lambda_j$ is the diagonal matrix with elements $\lambda_{jp}$, for $p=1,\ldots, m$, on the diagonal.
Furthermore, the action of $\tilde{\chi}$ on representatives $A_i, B_i$, $i=1,\ldots,g$ of hyperbolic classes of $\Gamma$
can be chosen that the transformation property \eqref{eq: multsysprop2} and the group law holds for some factor system
$\sigma_{2k}$.  With this, the proof of the lemma is complete.
\end{proof}

\subsection{The Gamma and related functions}
Throughout we will use basic properties of the Gamma function $\Gamma(s)$ and the Barnes $G$-function $G(s)$.
For the convenience of the reader, let us list here the properties will employ.

The Gamma function is defined for $\text{\rm Re}(s) > 0$ by
$$
\Gamma(s) = \int\limits_{0}^{\infty}e^{-t}t^{s}\frac{dt}{t}.
$$
It admits a meromorphic continuation to the whole complex plane with simple poles at zero and negative integers.
It is elementary to show that it
satisfies the functional equation $\Gamma(s+1)=s\Gamma(s)$ and the reflection formula, which is
$$
\Gamma(s)\Gamma(1-s) = \frac{\pi}{\sin(\pi s)}.
$$
For any integer $n\geq 1$, the Gamma function also satisfies the
multiplication formula
\begin{equation}\label{eq: gamma product f-la}
\Gamma\left(s\right) = (2\pi)^{\frac{1-n}{2}} n^{s-\frac{1}{2}} \prod_{\ell=0}^{n-1} \Gamma\left(\frac{s+\ell}{n}\right);
\end{equation}
see, for example,  \cite[Formula 8.335]{GR07}.

The Barnes
double Gamma function, also called the $G$-function, $G(s)$, is defined by the product expansion that
\begin{equation*}\label{e:barnesG}
G(s+1) = (2\pi)^{\frac{s}{2}} e^{-\frac{s(s+1)}{2}-\frac{1}{2}c s^2} \prod_{n=1}^\infty \bigg\{\bigg(1+\frac{s}{n}\bigg)^n e^{-s+\frac{s^2}{2n}}\bigg\}
\end{equation*}
satisifes $G(s+1) = \Gamma(s)G(s)$; see \cite[\S5.17]{dlmf}). Note that the above formulas continue meromorphically to $s\in \mathbb{C}$
except the the poles of Gamma functions and Barnes's $G$-functions.

Both the Gamma function and the Barnes $G$-function appear as factors in the functional equations for the
Selberg zeta function and the Ruelle zeta function.  As stated above, we will derive a functional equation
for the Ruelle zeta function which involves only $\sin(s)$ and not $\Gamma(s)$ or $G(s)$.  In doing so, we will
use the following lemma; for completeness, we will give a proof.

\begin{lemma}\label{lem: product of sines}
Let $\nu >1$ be an integer. Then for any real number for which $x\notin\mathbb{Z}$ one has that
\begin{equation}\label{eq: prod sines x}
\prod_{\ell=1}^{\nu} \sin\left(\frac{\ell-x}{\nu}\pi\right)= 2^{1-\nu} \sin(\pi x).
\end{equation}
If $x=n\in\{1,\ldots,\nu\}$, then
\begin{equation}\label{eq: prod sines int k}
\prod\limits_{\ell\in\{1,\ldots,\nu\}\setminus\{n\}} \sin\left(\frac{\ell-n}{\nu}\pi\right)=(-1)^{n-1} \nu 2^{1-\nu}.
\end{equation}
\end{lemma}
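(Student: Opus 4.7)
The plan is to deduce both identities from the classical Chebyshev-type product formula
\[\prod_{\ell=0}^{\nu-1}\sin\Bigl(z + \tfrac{\pi\ell}{\nu}\Bigr) = 2^{1-\nu}\sin(\nu z),\]
which is standard and can be proved, for instance, by applying the cyclotomic factorization $w^{\nu}-1 = \prod_{\ell=0}^{\nu-1}(w-e^{2\pi i\ell/\nu})$ to $w=e^{2iz}$, or by reading off the product of the roots of the Chebyshev polynomial of the second kind $U_{\nu-1}$. I would simply quote this identity with a brief sentence of justification.

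For \eqref{eq: prod sines x}, I would specialize the Chebyshev formula to $z=\pi(1-x)/\nu$ and reindex the product by setting $k=\ell+1$, so that $k$ ranges over $\{1,\dots,\nu\}$. The left-hand side then becomes $\prod_{k=1}^{\nu}\sin\bigl(\pi(k-x)/\nu\bigr)$, while the right-hand side becomes $2^{1-\nu}\sin(\pi(1-x)) = 2^{1-\nu}\sin(\pi x)$, using $\sin(\pi-\pi x)=\sin(\pi x)$. This gives \eqref{eq: prod sines x} for all $x\notin\mathbb{Z}$ (and in fact as an identity of entire functions of $x$).

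For \eqref{eq: prod sines int k}, substituting $x=n$ directly makes the $\ell=n$ factor on the left of \eqref{eq: prod sines x} vanish, so I would instead treat \eqref{eq: prod sines x} as an analytic identity in $x$, divide both sides by $\sin\bigl(\pi(n-x)/\nu\bigr)$, and pass to the limit $x\to n$. Using the first-order expansions
\[\sin(\pi x) = (-1)^{n}\sin(\pi(x-n)) \sim (-1)^{n}\pi(x-n) \quad\text{and}\quad \sin\bigl(\tfrac{\pi(n-x)}{\nu}\bigr) \sim -\tfrac{\pi(x-n)}{\nu},\]
the ratio tends to $(-1)^{n-1}\nu\,2^{1-\nu}$, while the left-hand side tends to $\prod_{\ell\neq n}\sin(\pi(\ell-n)/\nu)$. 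This yields \eqref{eq: prod sines int k}.

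I do not foresee any genuine obstacle; the argument is essentially bookkeeping. The only points requiring a little care are tracking the sign in $\sin(\pi(1-x))=\sin(\pi x)$ after the reindexing and in the two linear approximations used for the limit. Once the Chebyshev product identity is stated, both parts of the lemma follow by routine manipulation.
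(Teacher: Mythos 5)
Your proposal is correct. For the first identity \eqref{eq: prod sines x} you take a genuinely different route from the paper: you quote the elementary multiple-angle product $\prod_{\ell=0}^{\nu-1}\sin(z+\pi\ell/\nu)=2^{1-\nu}\sin(\nu z)$ (proved via the cyclotomic factorization of $w^{\nu}-1$) and specialize $z=\pi(1-x)/\nu$, whereas the paper converts each sine to a pair of Gamma factors by the reflection formula, collapses the two resulting products with the Gauss multiplication formula \eqref{eq: gamma product f-la}, and then applies reflection once more to recover $\sin(\pi x)$. The two arguments are of course equivalent in substance -- the Gauss multiplication formula is essentially the Gamma-function incarnation of your sine product -- but yours is more elementary and self-contained, while the paper's choice fits its context, since \eqref{eq: gamma product f-la} is already stated and reused in the manipulations of $Z_{\rm ell}$. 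For the second identity \eqref{eq: prod sines int k} your argument is essentially identical to the paper's: both divide \eqref{eq: prod sines x} by the single vanishing factor and pass to the limit as $x\to n$ (the paper writes $x=n+y$ and lets $y\to 0$; its printed ``$y\to\infty$'' is evidently a typo), with the same sign bookkeeping $\sin(\pi x)=(-1)^{n}\sin(\pi(x-n))$ producing the factor $(-1)^{n-1}\nu$.
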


\begin{proof}
To prove \eqref{eq: prod sines x} we apply the reflection formula for the Gamma function, followed by the product formula \eqref{eq: gamma product f-la}.
Specifically, we have that
\begin{align*}
\prod_{\ell=1}^\nu \sin\left(\frac{\ell-x}{\nu}\pi\right)
& = \pi^{\nu} \prod_{\ell=1}^\nu \bigg(\Gamma\left(\frac{\ell-x}{\nu}\right)\Gamma\left(1-\frac{\ell-x}{\nu}\right)\bigg)^{-1}
\\ & = \pi^\nu \bigg(\prod_{\ell=0}^{\nu-1}\Gamma\left(\frac{1-x+\ell}{\nu}\right)\bigg)^{-1}
\bigg(\prod_{\ell=0}^{\nu-1} \Gamma\left(\frac{x+\ell}{\nu}\right)\bigg)^{-1}
\\ & = \pi^{\nu} \frac{(2\pi)^{1-\nu}}{\Gamma\left(1-x\right)\Gamma\left(x\right)}
= 2^{1-\nu} \sin(\pi x).
\end{align*}
To prove \eqref{eq: prod sines int k}, we set $x=n+y$ in \eqref{eq: prod sines x}.
Then
\[\sin\left(-\frac{y}{\nu}\pi \right) \prod_{\ell\in \{1, \ldots, \nu\}\setminus\{n\}} \sin\left(\frac{\ell-n-y}{\nu}\pi\right)
= 2^{1-\nu}\sin(\pi(n+y)) = 2^{1-\nu}(-1)^n \sin(\pi y).\]
Dividing by $\sin(-\frac{y}{\nu}\pi)$ on both sides and then taking $y\to \infty$, we get \eqref{eq: prod sines int k}.
\end{proof}


\subsection{Scattering determinant}\label{ss:scattering}

Assume that $\tau\geq 1$ and that the degree of singularity $\tau_0$ of a fixed multiplier system $\chi$ of weight $2k$ is positive.
Let $\varphi(s;\chi)$ denote the determinant of the $\tau_0\times\tau_0$ automorphic scattering matrix $\Phi(s;\chi)$; see \cite[Section 1.5.8]{Fischer}.
The function $\varphi(s;\chi)$ is a meromorphic function of order at most two.
Furthermore, $\varphi(s;\chi)$ is holomorphic for $\Re(s) > \frac{1}{2}$, except for a finite number of poles,
and it satisfies the functional equation
\begin{equation}\label{functeq phi}
\varphi(s;\chi)\varphi(1-s;\chi)=1.
\end{equation}
For $\Re(s)> 1$ we have that
\begin{equation} \label{phiDirich}
\varphi (s;\chi)=\left( \frac{ \Gamma(s) \,\Gamma \left( s-\frac{1}{2}
\right) }{\Gamma \left( s -k\right) \Gamma \left( s +k\right) }\right) ^{\tau_0}\overset{\infty }{\underset
{n=1}{\sum }}\frac{d(n) }{g_{n}^{2s}},
\end{equation}
where $0< g_{1} < g_{2}< ...$, $d(n) \in \mathbb{C}$ with $d(1)\neq 0$ and the series is absolutely convergent; see \cite[formula (1.5.3.)]{Fischer}.
We will rewrite \eqref{phiDirich} in a slightly different form.
Let $c_{1}=-2\log{g_{1}}$, and set $u_{n}=g_{n}/g_{1}>1$.

Then for $\text{Re}(s) > 1$ we can write
\begin{equation}\label{e:varphi_Ltildevarphi}
\varphi( s;\chi) =L(s)\tilde{\varphi}(s;\chi)
\end{equation}
where
\begin{equation} \label{eqPhiA}
L(s) =\left( \frac{ \Gamma(s) \,\Gamma \left( s-\frac{1}{2}
\right) }{\Gamma \left( s -k\right) \Gamma \left( s +k\right) }\right) ^{\tau_0} d(1) e^{c_{1}s}
\end{equation}
and
\begin{equation} \label{eqPhiB}
\tilde{\varphi}(s;\chi) =1+\overset{\infty }{\underset{n=2}{\sum }}\frac{a(n)}{u_{n}^{2s}},
\end{equation}
with $a(n) \in \mathbb{C}$.
The series \eqref{eqPhiB} converges absolutely for $\Re(s)>1$.

Let $n_0=n_0(\Gamma;\chi)\in\mathbb{Z}$ denote the exponent in the Taylor, or Laurent, series expansion of $\tilde{\varphi}(s;\chi)$
as $s=0$ and let $a_{n_0}$ denote the lead term in this expansion.
In other words, let us write
\begin{equation}\label{eq: vartilde at zero}
s^{-n_0} \tilde{\varphi}(s; \chi) = a_{n_0} + O(s)\quad \text{ as } s\to 0.
\end{equation}

The scattering determinant $\varphi(s;\chi)$ is real-valued for $s\in \mathbb{R}$; see \cite[formula (1.5.7.)]{Fischer}.
It follows that $d(1) \in \mathbb{R}$.
Therefore, $L(s)$, defined by \eqref{eqPhiA} is real-valued for real $s$, and then $\tilde{\varphi}(s;\chi)$ is also real-valued for real $s$.
Consequently, we have that
$a_{n_0} \in\mathbb{R}$.

When $\tau_0=1$, the scattering determinant $\varphi (s;\chi)$ has a simple pole at $s=1$.  Then, from the functional equation \eqref{functeq phi},
we have that $\varphi (s;\chi)$ vanishes to order one at $s=0$. In this case, we can say more about $n_0$. Namely, we have that
\begin{equation}\label{eq: n0 when tau0=1}
n_0=\left\{
      \begin{array}{ll}
        0, & \text{  if  } k=0; \\
        1, & \text{  if  } k\in\mathbb{Z}\setminus \{0\}; \\
        2, & \text{  if  } k \notin\mathbb{Z}.
      \end{array}
    \right.
\end{equation}
When $\tau_0>1$, for generic Fuchsian groups, the behavior of the scattering determinant at the point $s=0$ is not yet fully understood.

\subsection{The Selberg zeta function and Ruelle zeta function}\label{sec:Selberg_and_Ruelle}

Following \cite[p.496]{Hejhal83}, the Selberg zeta function is defined for $\Re(s)>1$ by the absolutely convergent product
\begin{equation}\label{e:Z_def}
Z(s; \chi) = \prod_{\substack{P_0, \\ {\rm tr}(P_0)>2}} \prod_{\ell=0}^\infty \det\left(I_m - \chi(P_0) N(P_0)^{-s-\ell}\right);
\end{equation}
the product runs through all primitive hyperbolic elements $P_0$ of the group $\Gamma$, and $N(P_0)$ is the norm of the element $P_0$.
As above, $m$ is the dimension of the multiplier system $\chi$.
For $\mathrm{Re}(s)>1$, the Ruelle zeta function is defined by the absolutely convergent product
\begin{equation}\label{e:R_def}
R(s; \chi) = \prod_{\substack{P_0, \\ {\rm tr}(P_0)>2}}\mathrm{det}(I_m- \chi(P_0) N(P_0)^{-s} ).
\end{equation}
From the absolute convergence of \eqref{e:Z_def} and \eqref{e:R_def}, it is immediate that
\begin{equation}\label{eq: ruelle in terms of Selb zeta}
R(s; \chi) = \frac{Z(s; \chi)}{Z(s+1; \chi)},
\end{equation}
for $\Re(s)>1$. The Selberg zeta function is studied in great detail in \cite{Hejhal83}.
It is shown that the Selberg zeta function, hence the Ruelle zeta function, admits a meromorphic continuation to all $s \in \mathbb{C}$.
Furthermore, the Selberg zeta function admits a functional equation which relates the value at $s$ to the value at $1-s$.  One of
the first points in our analysis is to re-write the functional equation of $Z(s;\chi)$ in such a way that we derive
a functional equation for the Ruelle zeta function which relates the value at $s$ to the value at $-s$ with an explicit factor which is a meromorphic function of order one.  These considerations
follow from the work from \cite{Gong95} which expresses the Selberg zeta function
as a regularized product associated to a Laplacian operator.

{\it We assume that the weight $2k$, dimension $m$ unitary multiplier system $\chi$ is fixed and, for the sake of brevity, will
omit $\chi$ from the notation in the zeta functions and the scattering determinant.}

\subsection{Zeta regularized determinants}\label{sec:regularizedZeta}
Consider the weight $2k$ Maass-Laplacian
\begin{equation*}
  \Delta_{2k} = -y^2\left(\frac{\partial^2}{\partial x^2} + \frac{\partial^2}{\partial y^2} \right) +2kiy \frac{\partial}{\partial x}
\end{equation*}
which acts on the space of twice continuously differentiable functions $f:\mathbb{H}\to V$ that satisfy the transformation property
\begin{equation}\label{eq: transf prop functions}
f(\gamma z) = \exp(2ik \arg(cz +d)) \chi(\gamma) f(z) \quad \text{ for all }
 \gamma = \bpm * & * \\ c & d\ebpm \in \Gamma.
\end{equation}
Here we choose $\arg(cz+d) \in (-\pi, \pi]$.
We identify $\Delta_{2k}$ with its self-adjoint extension to the Hermitian space of all $L^2$ functions on $\mathcal{F}$ satisfying the
transformation property \eqref{eq: transf prop functions}.
When $\Gamma$ has no cusps, so $\tau=0$, or when $\tau\geq 1$ but the degree of singularity $\tau_0$ of $\chi$ is zero, then
the operator $\Delta_{2k}$ has only the discrete spectrum with eigenvalues $\lambda_0 \leq \lambda_1\leq \cdots \leq \lambda_j = s_j(1-s_j) \leq  \cdots$ tending to $+\infty$.
Otherwise, $\Delta_{2k}$ has both the discrete spectrum
and the continuous spectrum $[1/4,\infty)$ with the spectral measure expressed through the logarithmic derivative of the scattering determinant.

Following \cite[p.441-442]{Gong95}, we define the spectral zeta function $\zeta(w, s)$ for $\Re(s)>1$ and $\Re(w)$ sufficiently large by
\begin{equation}\label{e:spectralzeta_def}
\zeta(w, s) = \sum_{j\geq 0} (s_j(1-s_j) - s(1-s))^{-w} -\frac{1}{4\pi} \int\limits_{-\infty}^\infty \frac{\varphi'}{\varphi}\left(\frac{1}{2}+ir\right) \frac{dr}{((\frac{1}{4}+r^2)-s(1-s))^{w}},
\end{equation}
where, as above, $\varphi$ the scattering matrix. 
As proved in \cite{Gong95}, one can take $\Re(s)$ and $\Re(w)$ large enough so that
the series and the integral in \eqref{e:spectralzeta_def} converge absolutely.
Going further, it is proved in \cite{Gong95} that for $\Re(s)>1$ and $|k|-s\notin\mathbb{Z}_{\geq 0}$, the spectral zeta function $\zeta(w,s)$ possesses meromorphic continuation to the whole $w-$plane and is holomorphic at $w=0$.  With this result,
the zeta regularized determinant $\det\left(\Delta_{2k}-s(1-s)\right)$ for $\Re(s)>1$ and $|k|-s\notin\mathbb{Z}_{\geq 0}$ is defined as
\begin{equation}\label{e:det_def}
\det\left(\Delta_{2k}-s(1-s)\right) = e^{-\frac{\partial}{\partial w}\left.\zeta(w, s)\right\vert_{w=0}}.
\end{equation}
With this, \cite[Theorem 3]{Gong95} proves a relation between the zeta regularized product \eqref{e:det_def}
and the Selberg zeta function $Z(s)=Z(s;\chi)$.  Indeed, for $\Re(s)>1$ and $|k|-s\notin\mathbb{Z}_{\geq 0}$ one has that
\begin{equation}\label{e:det_Delta}
\det\left(\Delta_{2k}-s(1-s)\right)
= Z(s) Z_{I}(s)  Z_{\rm ell}(s) Z_{{\rm par}, 1}(s) e^{-bs(1-s)+\tilde{c}}
\end{equation}
for certain, explicitly computed, functions $ Z_{I}(s),\, Z_{\rm ell}(s)$ and $Z_{{\rm par}, 1}(s)$, which are given below.  Also,
in \cite[Section 5]{Gong95} it is proved that that $b=0$ and
$\tilde{c}$ is computed.  For our purposes, the constant $\tilde{c}$ does not play a role, so we refer to
\cite{Gong95} for the evaluation of $\tilde{c}$.

For $s\in \mathbb{C}\setminus(-\infty, |k|]$, it is shown that
\begin{multline}\label{e:ZI_def}
Z_{I}(s) = \exp\bigg\{\frac{m\omega(\mathcal{F})}{2\pi} \bigg(s\log(2\pi) + s(1-s) + \left(\frac{1}{2}+k\right) \log\Gamma\left(s+k\right)
\\ + \left(\frac{1}{2}-k\right) \log \Gamma\left(s-k\right) - \log G\left(s+k+1\right)-\log G\left(s-k+1\right)\bigg)\bigg\},
\end{multline}
\begin{multline}\label{e:Zell_def}
Z_{\rm ell}(s) = \prod_{j=1}^\rho \bigg\{\nu_j^{m(1-\frac{1}{\nu_j})s} \left(\Gamma\left(s-k\right)\Gamma\left(s+k\right)\right)^{-\frac{1}{2}m (1-\frac{1}{\nu_j})}
\\ \times \prod_{\ell=0}^{\nu_j-1} \bigg(\Gamma\left(\frac{s-k+\ell}{\nu_j}\right)^{\frac{\alpha_j(\ell)}{\nu_j} } \Gamma\left(\frac{s+k+\ell}{\nu_j}\right)^{\frac{\tilde{\alpha}_j(\ell)}{\nu_j} }\bigg)\bigg\},
\end{multline}
and
\begin{multline}\label{e:Zpar_def}
Z_{{\rm par}, 1}(s)
= 2^{-m\tau s}
  \prod_{j=1}^\tau \bigg\{\bigg(\frac{\Gamma\left(s+k\right)}{\Gamma\left(s-k\right)}\bigg)^{\frac{m}{2}-\beta_j}
 \prod_{p=m_j+1}^{m} (\sin(\pi \beta_{jp}))^{-s} \bigg\}
\\\times \left(s-\frac{1}{2}\right)^{\frac{1}{2}{\rm tr}(I_{\tau_0} -\Phi(\frac{1}{2}))} \bigg({\Gamma\left(s-k\right)}{\Gamma\left(s\right)\Gamma\left(s+\frac{1}{2}\right)}\bigg)^{\tau_0}.
\end{multline}

Let us emphasize that in all formulas above, and in the sequel below, we use the principal branch of the logarithm with its argument in
the range $(-\pi , \pi]$ in order to define rational powers of functions.

\subsection{Representations on Seifert fibered spaces} In this subsection we assume that $\tau=0$, so then
$M=\overline{\Gamma}\backslash \mathbb{H}$ is compact and assume that the genus $g$ of $M$ greater than zero.
Let $X=\overline{\Gamma}\backslash \mathrm{PSL}(2,\mathbb{R})$. Then $X$ is a Seifert fibered $3$-manifold which
admits a right action by the
circle group $K\subset \mathrm{PSL}(2,\mathbb{R})$ so that $X/K=M$;  \cite[Section 2]{Fr86c}.  It is proved
in \cite{Fr86c} that for any positive $m$ and admissible weight $2k$ every $m\times m$ multiplier system $\chi$
on $\Gamma$ induces a unitary representation on the fundamental group $\pi_1 (X)$ such that the
R-torsion associated to this  $m$-dimensional
unitary representation  equals $|R(0;\chi)|^{-1}$.
In the terminology from \cite[p. 149]{Fr86c}, such
representations are called \emph{classical}.   We refer to \cite[Section 2]{Fr86c} for
a further discussion of classical representations and their role in the study of R-torsion.

\emph{In this paper we consider representations of a Seifert fibered space that are not necessarily unitary,
but are acyclic which is a well-known necessary condition for the R-torsion to be well defined.}

Assume now that $X$ is a orientable Seifert fibered space given by the
Seifert index $\{b,(o,g); (\nu_1,\beta_1),\ldots,(\nu_\rho,\beta_\rho)\} $ with $g\geq 1$;
see \cite{JN83}.
The fundamental group of $X$ admits a presentation given by
\begin{equation}\label{eq: prez of fund group}
 \pi_1 (X)
=\left<a_1,b_1,\ldots,a_g,b_g,q_1,\ldots,q_\rho, h \bigg|
{ [a_i,h]=[b_i,h]=[q_j,h]=1, \atop q_j^{\nu_j}h^{\beta_j}=1, \, q_1\cdots q_\rho[a_1,b_1]\cdots [a_g,b_g] =h^b }\right>.
\end{equation}

\subsubsection{Irreducible acyclic $\mathrm{SL}(m,\mathbb{C})$-representations}\label{subs: irreducible kitano}

Let $\tilde{\rho}:\pi_1(X) \to \mathrm{SL}(m,\mathbb{C})$ be an irreducible acyclic representation.
The irreducibility of $\tilde{\rho}$ implies the existence of an $m$-th root of unity $\lambda$ such that $\tilde{\rho}(h)=\lambda I_m$.
Specifically, there exists $a\in \{1, \ldots, m-1\}$ with $\gcd(a, m)=1$ such that $\lambda=e^{2\pi i \frac{a}{m}}$.
If $\tilde{\rho}(h)= I_m$, the representation is not acyclic; see \cite[Proposition 4.1]{Ki96}.
In other words, in order for
an irreducible representation $\tilde{\rho}$ to be acyclic one must have $\lambda\neq 1$ (i.e. $a\neq 0$).

For each $j=1, \ldots, \rho$, let $\tilde{s}_j$ denote the homotopy class of the core of the exceptional fibers $S_j$ with the index $(\nu_j,\beta_j)$.
Each $\tilde{s}_j$ can be expressed as $q_j^{\mu_j}h^{\alpha_j}$ for $\mu_j, \alpha_j$ such that $\alpha_j\nu_j - \beta_j\mu_j=-1$ and $0<\mu_j<\nu_j$.
Then, when employing the fact that  $q_j^{\nu_j}h^{\beta_j}=1$, we get that
\[\tilde{\rho}(\tilde{s}_j)^{\nu_j} = \tilde{\rho}(q_j)^{\mu_j\nu_j}\tilde{\rho}(h)^{\nu_j\alpha_j}= \tilde{\rho}(h)^{\nu_j\alpha_j - \beta_j\mu_j}
= \exp\bigg(-2\pi i \frac{a}{m}\bigg)I_m.\]
In conclusion, the eigenvalues of $\tilde{\rho}(\tilde{s}_j)$ are numbers of the form
\[\lambda_{jp}:= \exp\left(\frac{-2\pi i}{\nu_j}(k+\alpha_{jp})\right)\]
where $k=\frac{a}{m}$ and $\alpha_{jp}$, for $p=1,\ldots, m$ are integers which come from the set $\{0,\ldots,\nu_j-1\}$.
By Lemma \ref{lem: existence of mult system}, one can associate to this representation $\tilde{\rho}$ an $m\times m$
unitary multiplier system $\chi_{\tilde{\rho}}$ of weight $2k$ such that the eigenvalues of $\chi_{\tilde{\rho}}(R_j)$
coincide with the eigenvalues of $\tilde{\rho}(\tilde{s}_j)$ for all $j=1,\ldots,\rho$.

\subsubsection{Even dimensional acyclic representations} \label{sebsect: yam represent}
Let $\tilde{\rho}$ be a $\mathrm{SL}(2,\mathbb{C})$-representation of the fundamental group $\pi_1(X)$ such that $\tilde{\rho}(h)=-I_2$.
Then, it is proved in \cite[Lemma 4.2]{Yamaguchi17} that for all $j=1,\ldots,\rho$ the
eigenvalues of $\tilde{\rho}(\tilde{s}_j)$ are given by $\exp(\pm i\pi \eta_j/\nu_j)$ for some odd integers $\eta_j$ with
$j=1,\ldots,\rho$.  As above, $\tilde{s}_j$ denotes the homotopy class of the core of the exceptional fibers $S_j$.

For an even positive integer $2N$, let $\tilde{\sigma}_{2N}$ denote an irreducible $2N$-dimensional representation
of $\mathrm{SL}(2,\mathbb{R})$ and define the representation $\rho_{2N}=\tilde{\sigma}_{2N} \circ \tilde{\rho}$.
It is further deduced in \cite{Yamaguchi17} that if $\tilde{\rho}(h)=-I_2$, then the representation $\rho_{2N}$ is acyclic.
Moreover, according to \cite[Remark 3.2]{Yamaguchi22}, if the eigenvalues of $\tilde{\rho}(\tilde{s}_j)$
are given by $\exp(\pm i\pi \eta_j/\nu_j)$, for some odd integers $\eta_j$, $j=1,\ldots,\rho$, then the eigenvalues
of $\rho_{2N}(\tilde{s}_j)$ are given by $\exp(\pm \pi i\eta_j/\nu_j), \exp(\pm 3\pi i \eta_j/\nu_j),\ldots, \exp(\pm (2N-1) \pi i\eta_j/\nu_j)$.

The representation $\rho_{2N}$ is an acyclic but not necessarily irreducible representation of a Seifert fibered space $X$
with  Seifert index $\{2-2g,(o,g); (\nu_1,\nu_1-1),\ldots,(\nu_\rho,\nu_\rho-1)\} $. The
associated R-torsion was computed in \cite{Yamaguchi17}.  The analogue of the Fried conjecture in the case $\eta_{j}=1$
was proved in \cite{Yamaguchi22}.   We will establish the Fried conjecture for general $\eta_{j}$ in Section \ref{sec: Yam appl}.

\section{Ruelle zeta function}\label{s:Rzeta}

\subsection{The functional equation of the Selberg zeta function}

In \cite{Hejhal83} and \cite{Fischer}, the authors study in detail the Selberg zeta function,
and its meromorphic continuation and functional equation were established; see, for example, equations
(5.7) on page 499 through (5.10) on page 501 of \cite{Hejhal83} or pages 115--116 of \cite{Fischer}.
For our purposes, it is necessary to find an expression for the factor in the functional equation which is somewhat
more tractable, which is the purpose of this section.
Throughout  the multiplier system $\chi:\Gamma \to \mathcal{U}(V)$ is fixed, so we will suppress it from the notation
for $Z(s;\chi)$ and simply write  $Z(s)=Z(s;\chi)$.

\begin{proposition} \label{prop: funct eq sel zeta}
The Selberg zeta function $Z(s)=Z(s;\chi)$ admits a
meromorphic continuation to the whole complex plane and satisfies the functional equation
\begin{equation}\label{e:Z-fe}
Z(1-s) = \kappa(s) Z(s)
\end{equation}
with
\begin{equation}\label{e:Selberg_factor}
\kappa(s) = \frac{Z_{I}(s)  Z_{\rm ell}(s) Z_{{\rm par}, 1}(s)}{Z_{I}(1-s)Z_{\rm ell}(1-s)   Z_{{\rm par}, 1}(1-s)}\varphi(s)
\end{equation}
and where the terms in \eqref{e:Selberg_factor} are as follows.
\begin{enumerate}[label=(\roman*)]
\item
The function $\varphi(s) = \varphi(s; \chi)$ is the determinant of the scattering matrix,
as described in Section \ref{ss:scattering}.
\item
For $s\in \mathbb{C}\setminus \left((-\infty, |k|]\cup [1-|k|, +\infty)\right)$, the factor $Z_{I}(s)/Z_{I}(1-s)$, which is
associated to the identity element $I \in \Gamma$, is
\begin{align}\label{e:ZI_fe}\notag
\frac{Z_{I}(s)}{Z_{I}(1-s)}
&= \exp\bigg\{\frac{m\omega(\mathcal{F})}{2\pi} \bigg((-1+2s) \log(2\pi)
+ k \log \frac{\sin(\pi(s-k))}{\sin(\pi(s+k))}
\\& - \frac{1}{2}\log\frac{\Gamma\left(s+k\right)}{\Gamma\left(1-s+k\right)}\frac{\Gamma\left(s-k\right)}{\Gamma\left(1-s-k\right)}- \log  \frac{G(s+k)G(s-k)}{G(1-s+k)G(1-s-k)}
\bigg)\bigg\}.
\end{align}
\item
For $s\in \mathbb{C}\setminus \left((-\infty, |k|]\cup [1-|k|, +\infty)\right)$, the factor $Z_{\rm ell}(s)/Z_{\rm ell}(1-s)$, which is associated to the elliptic elements in $\Gamma$, as described in Section \ref{sss:elliptic}, is
\begin{align}\label{e:Zell_fe}\notag
\frac{Z_{\rm ell}(s)}{Z_{\rm ell}(1-s)}
&=  2^{m\sum_{j=1}^{\rho} (1-\frac{1}{\nu_j})} \sin\left(\pi (s+k)\right)^{-m\sum_{j=1}^{\rho} \frac{1}{\nu_j}}
\bigg(\frac{\sin(\pi(s-k))}{\sin(\pi(s+k))}\bigg)^{\frac{1}{2}m \sum_{j=1}^{\rho}(1-\frac{1}{\nu_j})}
\\ & \times
\frac{\prod_{j=1}^\rho\prod_{\ell=1}^{\nu_j} \sin\left(\pi \left(\frac{-s-k+\ell}{\nu_j}\right)\right)^{\frac{\alpha_j(\ell)}{\nu_j}+r_j(\ell) } }
{\prod_{j=1}^\rho\prod_{\ell=0}^{\nu_j-1} \sin\left(\pi \left(\frac{s-k+\ell}{\nu_j}\right)\right)^{\frac{\alpha_j(\ell)}{\nu_j} } }.
\end{align}
\item
 For $s\in \mathbb{C}\setminus \left((-\infty, |k|]\cup [1-|k|, +\infty)\right)$,  the factor $Z_{\mathrm{par},1}(s)/Z_{\mathrm{par},1}(1-s)$, which is associated to the parabolic elements in $\Gamma$, as described in Section \ref{sss:parabolic}, is
\begin{align}\label{e:Zpar_fe}\notag
\frac{Z_{{\rm par}, 1}(s)}{Z_{{\rm par}, 1}(1-s)} &= 2^{-m\tau (2s-1)}
\bigg(\frac{\sin(\pi(s-k))}{\sin(\pi(s+k))}\bigg)^{\frac{m}{2}\tau -\sum_{j=1}^{\tau} \beta_j}
\prod_{j=1}^\tau \prod_{p=m_j+1}^{m} (\sin(\pi \beta_{jp}))^{1-2s}
\\& \times (-1)^{\frac{1}{2}{\rm tr}(I_{\tau_0} -\Phi(\frac{1}{2}))}
\bigg(\frac{\Gamma\left(s-k\right)}{\Gamma\left(1-s-k\right)}
\frac{\Gamma\left(1-s\right)\Gamma\left(\frac{3}{2}-s\right)}{\Gamma\left(s\right)\Gamma\left(s+\frac{1}{2}\right)}\bigg)^{\tau_0}.
\end{align}
\end{enumerate}
\end{proposition}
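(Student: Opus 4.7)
The plan is to derive \eqref{e:Z-fe}--\eqref{e:Selberg_factor} by taking Gong's regularized determinant formula \eqref{e:det_Delta} at both $s$ and $1-s$ and exploiting the symmetry $s\mapsto 1-s$. Since $\det(\Delta_{2k}-s(1-s))$ depends on $s$ only through the symmetric quantity $s(1-s)$, its two evaluations agree, while the continuous-spectrum contribution in the definition \eqref{e:spectralzeta_def} of the spectral zeta function carries the scattering data whose symmetrization produces a factor $\varphi(s)$ via the relation $\varphi(s)\varphi(1-s)=1$. Comparing the two forms of \eqref{e:det_Delta} therefore yields the preliminary identity $Z(1-s)=\kappa(s)Z(s)$ with $\kappa(s)$ of the stated form \eqref{e:Selberg_factor}.

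What remains is to evaluate the three ratios $Z_{I}(s)/Z_{I}(1-s)$, $Z_{\mathrm{ell}}(s)/Z_{\mathrm{ell}}(1-s)$, and $Z_{\mathrm{par},1}(s)/Z_{\mathrm{par},1}(1-s)$ explicitly. For $Z_{I}$, I would first apply $G(w+1)=\Gamma(w)G(w)$ to reduce the Barnes quotients $G(s\pm k+1)/G(2-s\pm k)$ to $G(s\pm k)/G(1-s\pm k)$, at the cost of extra Gamma terms; these combine with the coefficients $(\frac{1}{2}\pm k)\log\Gamma(s\pm k)$ and split as a symmetric piece $-\frac{1}{2}\log[\Gamma(s+k)\Gamma(s-k)/(\Gamma(1-s+k)\Gamma(1-s-k))]$ plus an antisymmetric piece. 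The reflection formula $\Gamma(z)\Gamma(1-z)=\pi/\sin(\pi z)$ applied to the cross pairs $\Gamma(s+k)\Gamma(1-s-k)$ and $\Gamma(s-k)\Gamma(1-s+k)$ converts the antisymmetric piece into $k\log[\sin(\pi(s-k))/\sin(\pi(s+k))]$, producing \eqref{e:ZI_fe}. For $Z_{\mathrm{par},1}$, the same reflection identity handles the Gamma ratios, while the substitution $s-\frac{1}{2}\mapsto -(s-\frac{1}{2})$ produces the sign $(-1)^{\frac{1}{2}\mathrm{tr}(I_{\tau_{0}}-\Phi(\frac{1}{2}))}$ in \eqref{e:Zpar_fe}.

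The elliptic factor is the most delicate. The product $\prod_{\ell=0}^{\nu_{j}-1}\Gamma((s\pm k+\ell)/\nu_{j})^{\alpha_{j}(\ell)/\nu_{j}}$ must be matched against its $s\mapsto 1-s$ counterpart via a reindexing $\ell\mapsto \nu_{j}-\ell$ combined with a shift modulo $\nu_{j}$. Assembling \eqref{e:Zell_fe} requires the Gamma multiplication formula \eqref{eq: gamma product f-la} to consolidate the products, the reflection formula to convert Gamma pairs into sines, Lemma \ref{lem: product of sines} to package the resulting sine products, and the shift identity \eqref{eq: r(l) identity} relating $\alpha_{j}(\ell)-\alpha_{j}(\ell-1)$ to $r_{j}(\ell)$ to track the transformed exponents. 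I expect the main technical obstacle to be this combinatorial bookkeeping of the elliptic residue data $\alpha_{j}(\ell),\tilde{\alpha}_{j}(\ell),r_{j}(\ell)$ under the reindexing; by contrast, the identity and parabolic ratios reduce to relatively direct applications of the reflection formula.
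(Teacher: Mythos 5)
Your proposal follows essentially the same route as the paper: the functional equation is extracted from Gong's determinant formula \eqref{e:det_Delta} evaluated at $s$ and at $1-s$, with the scattering determinant supplying the factor $\varphi(s)$ via $\varphi(s)\varphi(1-s)=1$, and the three ratios are then computed with exactly the tools you list ($G(w+1)=\Gamma(w)G(w)$, the reflection and multiplication formulas, the reindexing $\ell\mapsto\nu_j-1-\ell$ giving $\tilde{\alpha}_j(\nu_j-1-\ell)=m(\nu_j-1)-\alpha_j(\ell)$, and the identities \eqref{eq: r(l) identity} and \eqref{eq: prod sines x}). The one imprecision is your opening claim that the two evaluations of $\det(\Delta_{2k}-s(1-s))$ agree because the determinant depends only on $s(1-s)$: it is \emph{not} symmetric under $s\mapsto 1-s$ --- the precise input, which the paper establishes by adapting \cite[Proposition 2.5]{Teo20} to the spectral side of the trace formula, is the factorization $\det(\Delta_{2k}-s(1-s))=D(s(1-s))\varphi(s)^{-\frac{1}{2}}$ with $D$ symmetric, and it is exactly this asymmetric $\varphi(s)^{-\frac{1}{2}}$ factor (rather than a ``symmetrization'' of the continuous-spectrum term) that, combined with \eqref{functeq phi}, produces the $\varphi(s)$ in $\kappa(s)$.
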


\begin{remark}
By convention, the empty product is taken to be equal to $1$, meaning that when $\rho=0$ the elliptic factor equals one and
in case when $\tau=0$, the parabolic factor and the scattering determinant $\varphi$ are equal to one.  When $\tau >0$,
but the multiplier system is regular, meaning $\tau_{0}=0$, then the last factor in \eqref{e:Zpar_fe} is identically one.
\end{remark}

\begin{proof}
The proof follows from the discussion in Sections \ref{sec:Selberg_and_Ruelle} and \ref{sec:regularizedZeta}.
To begin, one can show the existence of a function $D(s(1-s))$ which is symmetric with respect to the change
in variables $s \mapsto 1-s$
such that
\begin{equation} \label{eq: delta in terms of D}
\det\left(\Delta_{2k}-s(1-s)\right)
= D(s(1-s)) \varphi(s)^{-\frac{1}{2}}.
\end{equation}
In the case when $\chi$ is trivial, \eqref{eq: delta in terms of D} is stated on \cite[p.68]{Teo20}.
In the general setting we are considering here, one begins with the
spectral side of the trace formula, as stated in \cite[Theorem 3]{Gong95}, from
which the proof of \cite[Proposition 2.5]{Teo20} applies with only straightforward notational changes.
As such, one then has, as in \cite{Teo20}, that
\begin{equation*}
\det\left(\Delta_{2k}-(1-s)s\right)
=  Z(1-s) Z_{I}(1-s) Z_{\rm ell}(1-s)  Z_{{\rm par}, 1}(1-s) e^{-bs(1-s)+\tilde{c}},
\end{equation*}
 for the constants $b$ and $\tilde{c}$ as in \eqref{e:det_Delta}.
When combined with \eqref{eq: delta in terms of D} and \eqref{e:det_Delta}, one gets that
\begin{equation}\label{eq:Zeta_formula}
 Z(1-s) Z_{I}(1-s)  Z_{\rm ell}(1-s) Z_{{\rm par}, 1}(1-s)\varphi(1-s)^{\frac{1}{2}}= Z(s) Z_{I}(s) Z_{\rm ell}(s)  Z_{{\rm par}, 1}(s)\varphi(s)^{\frac{1}{2}}.
\end{equation}
Immediately,  the functional equation \eqref{e:Z-fe} follows from \eqref{eq:Zeta_formula} and the functional equation for the scattering determinant,
which is stated in \eqref{functeq phi}.

Let us now use the formulas from Sections \ref{sec:Selberg_and_Ruelle} and \ref{sec:regularizedZeta} in order to derive the
expressions given in \eqref{e:ZI_fe}, \eqref{e:Zell_fe} and \eqref{e:Zpar_fe} above.
 Note that
\begin{equation*}
\left(\mathbb{C}\setminus (-\infty, |k|]\right) \cap \left(\mathbb{C}\setminus [1-|k|, +\infty)\right) = \mathbb{C}\setminus \left((-\infty, |k|]\cup [1-|k|, +\infty)\right).
\end{equation*}
For $s\in \mathbb{C}\setminus \left((-\infty, |k|]\cup [1-|k|, +\infty)\right)$,
the definition \eqref{e:ZI_def} of the identity contribution gives that
\begin{align}\label{eq: id contrib beginning}\notag
\frac{Z_{I}(s)}{Z_{I}(1-s)}
&= \exp\bigg\{\frac{m\omega(\mathcal{F})}{2\pi} \bigg((-1+2s) \log(2\pi)
+ \frac{1}{2}\log\frac{\Gamma\left(s+k\right)}{\Gamma\left(1-s+k\right)}\frac{\Gamma\left(s-k\right)}{\Gamma\left(1-s-k\right)}
\\& + k \log\frac{\Gamma\left(s+k\right)}{\Gamma\left(1-s+k\right)}\frac{\Gamma\left(1-s-k\right)}{\Gamma\left(s-k\right)}- \log \frac{G\left(s+k+1\right)}{G\left(1-s+k+1\right)}\frac{G\left(s-k+1\right)}{G\left(1-s-k+1\right)}\bigg)\bigg\}.
\end{align}
The Barnes $G$-function satisfies the relation that
$G(s+1) = \Gamma(s) G(s)$; see, for example, \cite[\S5.17]{dlmf}.
Therefore
\begin{equation*}\label{eq: id contr G term}
\frac{G(s+k+1)G(s-k+1)}{G(1-s+k+1)G(1-s-k+1)}
 = \frac{\Gamma\left(s+k\right)\Gamma\left(s-k\right) G(s+k)G(s-k)}{\Gamma\left(-s+k+1\right)\Gamma\left(-s-k+1\right) G(-s+k+1)G(-s-k+1)}.
\end{equation*}
Substituting this expression into \eqref{eq: id contrib beginning} gives that
\begin{align}\label{eq: id contrib beginning 2}\notag
\frac{Z_{I}(s)}{Z_{I}(1-s)}
= &\exp\bigg\{\frac{m\omega(\mathcal{F})}{2\pi} \bigg((-1+2s) \log(2\pi)
- \frac{1}{2}\log\frac{\Gamma\left(s+k\right)}{\Gamma\left(1-s+k\right)}\frac{\Gamma\left(s-k\right)}{\Gamma\left(1-s-k\right)}
\\& + k \log\frac{\Gamma\left(s+k\right)}{\Gamma\left(1-s+k\right)}\frac{\Gamma\left(1-s-k\right)}{\Gamma\left(s-k\right)}
- \log  \frac{G(s+k)G(s-k)}{G(-s+k+1)G(-s-k+1)}
\bigg)\bigg\}.
\end{align}
The reflection formula for the Gamma function implies that
\begin{equation*}
\frac{\Gamma\left(s\pm k\right)}{\Gamma\left(1-s\pm k\right)}
= \Gamma\left(s+k\right)\Gamma\left(s-k\right)\frac{\sin(\pi(s\mp k))}{\pi}
\end{equation*}
which together with \eqref{eq: id contrib beginning 2} yields \eqref{e:ZI_fe}.

In order to deduce  the formula \eqref{e:Zell_fe} for the elliptic contribution,
we start with \eqref{e:Zell_def} and rearrange the product with powers involving $\tilde{\alpha}(\ell)$ as
\begin{equation}\label{eq: prod gammas with tilde}
\prod_{\ell=0}^{\nu_j-1} \Gamma\left(\frac{s+k+\ell}{\nu_j}\right)^{\frac{\tilde{\alpha}_j(\ell)}{\nu_j}}
= \prod_{\ell=0}^{\nu_j-1} \Gamma\left(\frac{s+k-1-\ell}{\nu_j}+1\right)^{\frac{\tilde{\alpha}_j(\nu_j-1-\ell)}{\nu_j}}.
\end{equation}
Recall that
\begin{equation*}
\alpha_j(\ell)=\sum_{p=1}^m \alpha_{jp}(\ell)\quad \text{ and } \quad
\tilde{\alpha}_j(\ell)=\sum_{p=1}^m \tilde{\alpha}_{jp}(\ell).
\end{equation*}
Furthermore, $\alpha_{jp}(\ell)$ (resp. $\tilde{\alpha}_{jp}(\ell)$) is the residue of $\alpha_{jp}+\ell$
(resp. $-\alpha_{jp}+\ell$) modulo $\nu_j$.
Hence
\begin{equation*}
\tilde{\alpha}_j(\nu_j-1-\ell) = \sum_{p=1}^m \tilde{\alpha}_{jp}(\nu_j-1-\ell)
\end{equation*}
and $\tilde{\alpha}_{jp}(\nu_j-1-\ell)\in \{0,\ldots, \nu_j-1\}$ is such that
\begin{equation*}
\tilde{\alpha}_{jp}(\nu_j-1-\ell)\equiv -\alpha_{jp} +\nu_j-1-\ell
\equiv ( -\alpha_{jp}(\ell) +\nu_j-1)\bmod{\nu_j}.
\end{equation*}
Since $\alpha_{jp}(\ell)\in \{0,\ldots, \nu_j-1\}$, then  $-\alpha_{jp}(\ell)+\nu_j-1 \in \{0,\ldots, \nu_j-1\}$, so we get
\begin{equation*}
\tilde{\alpha}_{jp}(\nu_j-1-\ell) = -\alpha_{jp}(\ell)+\nu_j-1
\end{equation*}
and then
\begin{equation*}
\tilde{\alpha}_j(\nu_j-1-\ell) = \sum_{p=1}^m \tilde{\alpha}_{jp}(\nu_j-1-\ell)=m(\nu_j-1) -\alpha_j(\ell).
\end{equation*}
Thus \eqref{eq: prod gammas with tilde} becomes that
\begin{equation*}\label{eq: prod gammas with tilde 2}
\prod_{\ell=0}^{\nu_j-1} \Gamma\left(\frac{s+k+\ell}{\nu_j}\right)^{\frac{\tilde{\alpha}_j(\ell)}{\nu_j}}
= \prod_{\ell=0}^{\nu_j-1} \Gamma\left(\frac{s+k-1-\ell}{\nu_j}+1\right)^{m(1-\frac{1}{\nu_j}) -\frac{\alpha_j(\ell)}{\nu_j}}.
\end{equation*}
Then for $s\in \mathbb{C}\setminus (-\infty, |k|]$, we arrive that
\begin{align*}
Z_{\rm ell}(s) = \prod_{j=1}^\rho \bigg\{&\nu_j^{m(1-\frac{1}{\nu_j})s}
\left(\Gamma\left(s-k\right)\Gamma\left(s+k\right)\right)^{-\frac{1}{2}m (1-\frac{1}{\nu_j})}
\\& \times \prod_{\ell=0}^{\nu_j-1} \bigg(\Gamma\left(\frac{s-k+\ell}{\nu_j}\right)\bigg)^{\frac{\alpha_j(\ell)}{\nu_j} }
\prod_{\ell=0}^{\nu_j-1} \Gamma\left(\frac{s+k-1-\ell}{\nu_j}+1\right)^{m(1-\frac{1}{\nu_j}) -\frac{\alpha_j(\ell)}{\nu_j}}
\bigg\}.
\end{align*}
We can get a similar formula for $Z_{\rm ell}(1-s)$ for $s\in \mathbb{C}\setminus [1-|k|, +\infty)$.
When
combining these formulas and applying the reflection formula for $\Gamma(s)$, we get
for $s\in \mathbb{C}\setminus \left((-\infty, |k|]\cup [1-|k|, +\infty)\right)$ that
\begin{align*}
\frac{Z_{\rm ell}(s)}{Z_{\rm ell}(1-s)}
= &\prod_{j=1}^\rho \bigg\{\nu_j^{m(1-\frac{1}{\nu_j})(-1+2s)}
\bigg(\frac{\Gamma\left(s-k\right)\Gamma\left(s+k\right)}{\Gamma\left(1-s-k\right)\Gamma\left(1-s+k\right)}\bigg)^{-\frac{1}{2}m (1-\frac{1}{\nu_j})}
\\& \times \prod_{\ell=0}^{\nu_j-1} \bigg(\sin\left(\pi \left(\frac{s-k+\ell}{\nu_j}\right)\right)\bigg)^{-\frac{\alpha_j(\ell)}{\nu_j} }
\prod_{\ell=0}^{\nu_j-1} \bigg(\sin\left(\pi \left(\frac{1-s-k+\ell}{\nu_j}\right)\right)\bigg)^{\frac{\alpha_j(\ell)}{\nu_j}}
\\& \times \prod_{\ell=0}^{\nu_j-1} \Gamma\left(\frac{-1+s+k-\ell}{\nu_j}+1\right)^{m(1-\frac{1}{\nu_j})}
\prod_{\ell=0}^{\nu_j-1}\Gamma\left(\frac{-s+k-\ell}{\nu_j}+1\right)^{-m(1-\frac{1}{\nu_j})}
\bigg\}.
\end{align*}
Recall the identities \eqref{eq: r(l) identity} and \eqref{eq: prod sines x}.  By rearranging the products, we get
$j\in \{1, \ldots, \rho\}$ that
\begin{align*}
\frac{Z_{\rm ell}(s)}{Z_{\rm ell}(1-s)}
&= \prod_{j=1}^\rho \bigg\{\nu_j^{m(1-\frac{1}{\nu_j})(-1+2s)}
2^{m(1-\frac{1}{\nu_j})} \sin\left(\pi (s+k)\right)^{-\frac{m}{\nu_j}}
\\& \times
\bigg(\frac{\Gamma\left(s-k\right)\Gamma\left(s+k\right)}{\Gamma\left(1-s-k\right)\Gamma\left(1-s+k\right)}\bigg)^{-\frac{1}{2}m (1-\frac{1}{\nu_j})}
\\& \times
\frac{\prod_{\ell=1}^{\nu_j} \sin\left(\pi \left(\frac{-s-k+\ell}{\nu_j}\right)\right)^{\frac{\alpha_j(\ell)}{\nu_j}+r_j(\ell) } }
{\prod_{\ell=0}^{\nu_j-1} \sin\left(\pi \left(\frac{s-k+\ell}{\nu_j}\right)\right)^{\frac{\alpha_j(\ell)}{\nu_j} } }
\frac{\prod_{\ell=0}^{\nu_j-1} \Gamma\left(\frac{s+k+\nu_j-1-\ell}{\nu_j}\right)^{m(1-\frac{1}{\nu_j})}}
{\prod_{\ell=0}^{\nu_j-1}\Gamma\left(\frac{1-s+k+\nu_j-1-\ell}{\nu_j}\right)^{m(1-\frac{1}{\nu_j})}}
\bigg\}.
\end{align*}
Here we also used that $\alpha_j(\nu_j) = \alpha_j(0)$.
After changing $\nu_j-1-\ell$ to $\ell$, and then applying \eqref{eq: gamma product f-la}, we get
\begin{equation*}
\frac{\prod_{\ell=0}^{\nu_j-1}\Gamma\left(\frac{s+k+\nu_j-1-\ell}{\nu_j}\right)}{\prod_{\ell=0}^{\nu_j-1}\Gamma\left(\frac{1-s+k+\nu_j-1-\ell}{\nu_j}\right)}
= \frac{\prod_{\ell=0}^{\nu_j-1}\Gamma\left(\frac{s+k+\ell}{\nu_j}\right)}{\prod_{\ell=0}^{\nu_j-1}\Gamma\left(\frac{1-s+k+\ell}{\nu_j}\right)}
= \frac{\nu_j^{-2s+1} \Gamma\left(s+k\right)}{\Gamma\left(1-s+k\right)}
\end{equation*}
Hence, we have
\begin{multline*}
\frac{Z_{\rm ell}(s)}{Z_{\rm ell}(1-s)}
= \prod_{j=1}^\rho \bigg\{
2^{m(1-\frac{1}{\nu_j})} \sin\left(\pi (s+k)\right)^{-\frac{m}{\nu_j}}
\bigg(\frac{\Gamma\left(1-s-k\right)\Gamma\left(s+k\right)}{\Gamma\left(s-k\right)\Gamma\left(1-s+k\right)}\bigg)^{\frac{1}{2}m (1-\frac{1}{\nu_j})}
\\ \times
\frac{\prod_{\ell=1}^{\nu_j} \sin\left(\pi \left(\frac{-s-k+\ell}{\nu_j}\right)\right)^{\frac{\alpha_j(\ell)}{\nu_j}+r_j(\ell) } }
{\prod_{\ell=0}^{\nu_j-1} \sin\left(\pi \left(\frac{s-k+\ell}{\nu_j}\right)\right)^{\frac{\alpha_j(\ell)}{\nu_j} } }
\bigg\}.
\end{multline*}
When now using  the reflection formula for Gamma functions, we arrive at \eqref{e:Zell_fe}.

It remains to prove \eqref{e:Zpar_fe}.
Recall \eqref{e:Zpar_def}.  For $s\in \mathbb{C}\setminus \left((-\infty, |k|]\cup [1-|k|, +\infty)\right)$, we have that
\begin{align}\notag
\frac{Z_{{\rm par}, 1}(s)}{Z_{{\rm par}, 1}(1-s)}
= &2^{-m\tau (2s-1)}
\prod_{j=1}^\tau \bigg\{\bigg(\frac{\Gamma\left(s+k\right)}{\Gamma\left(s-k\right)} \frac{\Gamma\left(1-s-k\right)}{\Gamma\left(1-s+k\right)}\bigg)^{\frac{m}{2}-\beta_j}
\prod_{p=m_j+1}^{m} (\sin(\pi \beta_{jp}))^{1-2s} \bigg\}
\\& \times (-1)^{\frac{1}{2}{\rm tr}(I_{\tau_0} -\Phi(\frac{1}{2}))}
\bigg(\frac{\Gamma\left(s-k\right)}{\Gamma\left(1-s-k\right)}
\frac{\Gamma\left(1-s\right)\Gamma\left(\frac{3}{2}-s\right)}{\Gamma\left(s\right)\Gamma\left(s+\frac{1}{2}\right)}\bigg)^{\tau_0}.
\end{align}
By using the reflection formula for the ratio of Gamma functions in the first line,
the proof of  \eqref{e:Zpar_fe} follows immediately.
\end{proof}

\subsection{The functional equation for the Ruelle zeta function}\label{s:RZ_fe}

Let us recall the equation \eqref{eq: ruelle in terms of Selb zeta}
which implies that the meromorphic continuation of $R(s; \chi)$ is a consequence
of the meromorphic continuation of $Z(s; \chi)$.  We will now use the functional equation for $Z(s; \chi)$, as given in Proposition \ref{prop: funct eq sel zeta}, to derive a simple functional equation for $R(s; \chi)$. Again, since the multiplier system $\chi$ is fixed,
we will omit using $\chi$ in the notation whenever possible.

\begin{proposition} \label{prop: funct eq R zeta}
With the notation as above, the Ruelle zeta function satisfies the functional equation
\begin{equation}\label{eq: ruella zeta funct eq}
R(s)\varphi(s)= \left(R(-s)\varphi(-s)\right)^{-1}H(s)
\end{equation}
where the meromorphic function $H(s)$ is defined for $s\in \mathbb{C}$ by
\begin{align}\label{e:H}\notag
H(s)
= & 2^{2m(2g-2)}
\big(\sin(\pi(s+k))\sin(\pi(-s+k))\big)^{m(2g-2-\rho+\tau)}
\\& \notag \times \prod_{j=1}^{\rho} \prod_{\ell=1}^{\nu_j} \bigg(\sin\left(\pi \left(\frac{-s-k+\ell}{\nu_j}\right)\right)\sin \left(\pi \left(\frac{s-k+\ell}{\nu_j}\right)\right)\bigg)^{-r_j(\ell)}
\\& \times
\bigg(\prod_{j=1}^\tau \prod_{p=m_j+1}^{m} (\sin(\pi \beta_{jp}))\bigg)^{-2}
\bigg(\frac{(-s-k)(s-k)}{s(-s)(s+\frac{1}{2})(-s+\frac{1}{2})}\bigg)^{\tau_0}.
\end{align}

\end{proposition}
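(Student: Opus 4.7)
The strategy is to reduce the Ruelle functional equation to an elementary quotient of Selberg functional factors, exploiting the work already done in Proposition \ref{prop: funct eq sel zeta}. From \eqref{eq: ruelle in terms of Selb zeta} we have $R(s)=Z(s)/Z(s+1)$, hence
\[
R(s)R(-s)=\frac{Z(s)\,Z(-s)}{Z(s+1)\,Z(1-s)}.
\]
Two applications of the Selberg functional equation \eqref{e:Z-fe}, at $w=s$ (giving $Z(1-s)=\kappa(s)Z(s)$) and at $w=s+1$ (giving $Z(-s)=\kappa(s+1)Z(s+1)$), immediately collapse this into the simple identity $R(s)R(-s)=\kappa(s+1)/\kappa(s)$. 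Writing $\kappa(w)=\kappa_0(w)\varphi(w)$, where $\kappa_0(w)$ is the quotient of identity, elliptic and parabolic factors in \eqref{e:Selberg_factor}, I multiply both sides by $\varphi(s)\varphi(-s)$. The scattering determinants disappear because \eqref{functeq phi} applied at $s+1$ yields $\varphi(s+1)\varphi(-s)=1$, so
\[
R(s)R(-s)\varphi(s)\varphi(-s) = \frac{\kappa_0(s+1)}{\kappa_0(s)}.
\]
The proof therefore reduces to verifying that this quotient equals $H(s)$ as defined in \eqref{e:H}.

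The verification is a direct term-by-term computation of $F_\bullet(s+1)/F_\bullet(s)$ for $F_\bullet(w):=Z_\bullet(w)/Z_\bullet(1-w)$ with $\bullet\in\{I,\mathrm{ell},(\mathrm{par},1)\}$, using the closed forms \eqref{e:ZI_fe}, \eqref{e:Zell_fe}, \eqref{e:Zpar_fe} together with the relations $\Gamma(w+1)=w\Gamma(w)$, $G(w+1)=\Gamma(w)G(w)$, $\sin(\pi(w+1))=-\sin(\pi w)$ and the Gamma reflection formula. For the identity factor, the $k\log(\sin/\sin)$ piece cancels outright by the sine-shift identity, while the Gamma and Barnes-$G$ contributions combine (after reflection) so that the polynomial piece $(s^2-k^2)$ cancels and only a power of $\sin(\pi(s+k))\sin(\pi(-s+k))$ with exponent proportional to $m\omega(\mathcal{F})/(2\pi)$ survives, together with an absolute constant. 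For the parabolic factor, the prefactor $(\sin(\pi(s-k))/\sin(\pi(s+k)))^{m\tau/2-\beta_j}$ collapses to $1$ by the sine shift, the factor $(\sin(\pi\beta_{jp}))^{1-2s}$ yields $(\sin(\pi\beta_{jp}))^{-2}$ in the ratio, the sign $(-1)^{\frac12\mathrm{tr}(I_{\tau_0}-\Phi(\frac12))}$ is $s$-independent and cancels, and the $\tau_0$-th power Gamma block produces, upon applying $\Gamma(w+1)=w\Gamma(w)$ to each of the four factors, precisely the rational factor $((-s-k)(s-k)/[s(-s)(s+\tfrac12)(-s+\tfrac12)])^{\tau_0}$ appearing in \eqref{e:H}.

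The principal obstacle is the elliptic ratio \eqref{e:Zell_fe}, whose exponents $\alpha_j(\ell)/\nu_j$ and $r_j(\ell)$ are rational and depend on both $\ell$ and $j$. For each fixed $j$, my plan is to reindex the shifted inner products by $\ell\mapsto\ell+1$, using $\sin(\pi(x+1))=-\sin(\pi x)$ and the periodicities $\alpha_j(\nu_j)=\alpha_j(0)$ and $r_j(\nu_j)=r_j(0)$ to absorb the boundary terms, so that the net exponent on each sine $\sin(\pi((\pm s\mp k+\ell)/\nu_j))$ becomes $[\alpha_j(\ell)-\alpha_j(\ell-1)]/\nu_j=m/\nu_j-r_j(\ell)$ by identity \eqref{eq: r(l) identity}. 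The $m/\nu_j$ subproduct then telescopes via Lemma \ref{lem: product of sines} into a constant multiple of $\sin(\pi(s+k))^{m/\nu_j}$ from one side and $\sin(\pi(-s+k))^{m/\nu_j}$ from the other. Summing these $m/\nu_j$ contributions over $j=1,\dots,\rho$ and combining with the $m\omega(\mathcal{F})/(2\pi)$ contribution from the identity factor, the area formula \eqref{eq: volume of F} converts the combined exponent on $[\sin(\pi(s+k))\sin(\pi(-s+k))]$ into the one stated in \eqref{e:H}. The residual $-r_j(\ell)$ exponents then furnish the double product in \eqref{e:H}, and a careful bookkeeping of the constants $2^{1-\nu_j}$ produced by Lemma \ref{lem: product of sines} together with the $2^{m\sum(1-1/\nu_j)}$ prefactor in \eqref{e:Zell_fe} supplies the overall power of $2$. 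No new analytic input beyond Proposition \ref{prop: funct eq sel zeta}, Lemma \ref{lem: product of sines}, and \eqref{eq: r(l) identity} is required; the main difficulty is precisely the sign-and-constant bookkeeping on the elliptic side.
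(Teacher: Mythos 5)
Your proposal is correct and follows essentially the same route as the paper: $R(s)R(-s)=\kappa(s+1)/\kappa(s)$, elimination of the scattering determinant via $\varphi(s+1)\varphi(-s)=1$, and a factor-by-factor evaluation of $Z_\bullet(1+s)Z_\bullet(1-s)/\big(Z_\bullet(s)Z_\bullet(-s)\big)$, which is exactly the content of Lemma \ref{lem: factors of funct eq Ruelle}; your reindexing $\ell\mapsto\ell+1$ combined with \eqref{eq: r(l) identity} and Lemma \ref{lem: product of sines} correctly supplies the elliptic computation that the paper omits ``for the sake of space.'' One caveat: carried out as you describe, the combined exponent on $\sin(\pi(s+k))\sin(\pi(-s+k))$ is $m\omega(\mathcal{F})/(2\pi)+\sum_j m/\nu_j=m(2g-2+\rho+\tau)$ by \eqref{eq: volume of F}, not the $m(2g-2-\rho+\tau)$ printed in \eqref{e:H}; since the former is what \eqref{eq: ide term ruelle} and \eqref{eq: ell term ruelle} actually combine to, and is what the order-of-vanishing formula in Theorem \ref{thm:Ruelle_vanishing_constant} requires, the printed sign of $\rho$ in \eqref{e:H} appears to be a typo rather than a defect of your argument, but you should not assert that your computation reproduces \eqref{e:H} verbatim.
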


\begin{proof}
By \eqref{e:Z-fe} we have that
\begin{equation} \label{eq: R funct equations 2}
R(s) = \frac{Z(s)}{Z(s+1)} = \frac{Z(s)\kappa(s+1)}{Z(-s)}
\,\,\,\,\,
\text{\rm and}
\,\,\,\,\,
R(-s) = \frac{Z(-s)}{Z(1-s)} = \frac{Z(-s)}{Z(s)\kappa(s)}.
\end{equation}
Then
\begin{equation*}
R(s)R(-s) = \frac{\kappa(s+1)}{\kappa(s)}
=  H(s) (\varphi(s)\varphi(-s))^{-1}.
\end{equation*}
where
\begin{equation*}
H(s) = \frac{Z_{I}(1+s)  Z_{I}(1-s)}{Z_{I}(s)Z_{I}(-s)}
\frac{Z_{\rm ell}(1+s)  Z_{\rm ell}(1-s)}{Z_{\rm ell}(s)Z_{\rm ell}(-s)}
\frac{Z_{{\rm par}, 1} (1+s)  Z_{{\rm par}, 1} (1-s)}{Z_{{\rm par}, 1}(s) Z_{{\rm par}, 1}(-s)}.
\end{equation*}
By Lemma \ref{lem: factors of funct eq Ruelle}, we get \eqref{e:H}.
Note that we have used the formula for $\omega(\mathcal{F})$ in \eqref{eq: volume of F}.
Also, the exponents appearing in $H(s)$ in \eqref{e:H} are all integers.
As a result, the expression in \eqref{e:H}, which initially is defined for $s\in \mathbb{C}\setminus(-\infty, \infty)$,
can be extended to an order one meromorphic function on the whole complex plane.
\end{proof}

Let us now state a second form of the functional equation for the Ruelle zeta function.

\begin{corollary}\label{cor:RZ_fe}
With the notation as above, specifically \eqref{phiDirich}, \eqref{e:varphi_Ltildevarphi} and \eqref{eqPhiA}, the Ruelle zeta function satisfies the functional equation
\begin{equation}\label{eq: ruella zeta funct eq2}
R(s)\tilde{\varphi}(s)= c(\chi,\Gamma)^{-2} \left(R(-s)\tilde{\varphi}(-s)\right)^{-1}H_{1}(s)
\end{equation}
where the meromorphic function $H_1(s)$ is defined for $s\in \mathbb{C}$ as
\begin{align}\label{eq: ruella zeta funct eq2_factor}
H_1(s)
&= \big(\sin(\pi(s+k))\sin(\pi(-s+k))\big)^{m(2g-2-\rho+\tau)-\tau_0}
\\& \notag\times \prod_{j=1}^{\rho} \prod_{\ell=1}^{\nu_j} \bigg(\sin\left(\pi \left(\frac{-s-k+\ell}{\nu_j}\right)\right)\sin \left(\pi \left(\frac{s-k+\ell}{\nu_j}\right)\right)\bigg)^{-r_j(\ell)} \left(\frac{\sin(\pi s) \cos(\pi s)}{s}\right)^{\tau_0}\notag
\end{align}
and
\begin{equation}\label{e:c-RZfe2}
c(\chi,\Gamma)=  2^{-m(2g-2)} d(1) \prod_{j=1}^\tau \prod_{p=m_j+1}^{m} (\sin(\pi \beta_{jp})).
\end{equation}
\end{corollary}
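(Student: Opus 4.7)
The plan is to start from the functional equation in Proposition \ref{prop: funct eq R zeta}, namely
\[R(s)\varphi(s) = \bigl(R(-s)\varphi(-s)\bigr)^{-1} H(s),\]
and use the factorization $\varphi(s) = L(s)\tilde{\varphi}(s)$ from \eqref{e:varphi_Ltildevarphi} to replace $\varphi$ by $\tilde\varphi$. This gives
\[R(s)\tilde\varphi(s)\cdot R(-s)\tilde\varphi(-s) = \frac{H(s)}{L(s)L(-s)},\]
so the corollary reduces to computing the ratio $H(s)/\bigl(L(s)L(-s)\bigr)$ explicitly and recognizing the scalar factor $c(\chi,\Gamma)^{-2}$ that appears.

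The first step is to evaluate $L(s)L(-s)$ using \eqref{eqPhiA}. The exponential factors $e^{c_1 s}$ and $e^{-c_1 s}$ cancel, leaving $d(1)^2$ together with a $\tau_0$--power of a ratio of four Gamma products. For each of the four pairs $\Gamma(s)\Gamma(-s)$, $\Gamma(s-\tfrac12)\Gamma(-s-\tfrac12)$, $\Gamma(s-k)\Gamma(-s+k)$, $\Gamma(s+k)\Gamma(-s-k)$ I would apply the reflection formula $\Gamma(z)\Gamma(1-z)=\pi/\sin(\pi z)$ together with the recursion $\Gamma(z+1)=z\Gamma(z)$ to obtain closed expressions of the form $\pm\pi/(u\sin(\pi u))$ or $\pi/((\tfrac14-s^2)\cos(\pi s))$. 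After simplification and use of the identity $\sin(\pi(s-k))\sin(\pi(s+k)) = -\sin(\pi(s+k))\sin(\pi(-s+k))$, one obtains
\[L(s)L(-s) = d(1)^2 \left[\frac{(s^2-k^2)\,\sin(\pi(s+k))\sin(\pi(-s+k))}{s\,(\tfrac14-s^2)\,\sin(\pi s)\cos(\pi s)}\right]^{\tau_0}.\]

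Next I would divide $H(s)$ by this and simplify, matching factors by kind. The $\tau_0$--power factor of $H(s)$ from \eqref{e:H} is $\bigl((-s-k)(s-k)/(s(-s)(s+\tfrac12)(-s+\tfrac12))\bigr)^{\tau_0}=\bigl((s^2-k^2)/(s^2(\tfrac14-s^2))\bigr)^{\tau_0}$, and after multiplying by $\bigl(L(s)L(-s)\bigr)^{-1}$ the polynomial pieces cancel neatly to give
\[\left(\frac{\sin(\pi s)\cos(\pi s)}{s\,\sin(\pi(s+k))\sin(\pi(-s+k))}\right)^{\tau_0},\]
whose $\sin(\pi(s+k))\sin(\pi(-s+k))$ contribution combines with the same factor in $H(s)$ (with exponent $m(2g-2-\rho+\tau)$) to yield exponent $m(2g-2-\rho+\tau)-\tau_0$, while the residual $(\sin(\pi s)\cos(\pi s)/s)^{\tau_0}$ matches the last factor in \eqref{eq: ruella zeta funct eq2_factor}. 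The elliptic products $\prod_j\prod_\ell(\sin\cdots\sin\cdots)^{-r_j(\ell)}$ are carried through unchanged from $H(s)$.

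Finally, the remaining scalar $2^{2m(2g-2)}\cdot d(1)^{-2}\cdot\prod_{j,p}\sin(\pi\beta_{jp})^{-2}$, obtained from the $2^{2m(2g-2)}$ and $\prod_{j,p}(\sin\pi\beta_{jp})^{-2}$ factors in $H(s)$ together with the $d(1)^{-2}$ from $(L(s)L(-s))^{-1}$, is exactly $c(\chi,\Gamma)^{-2}$ by \eqref{e:c-RZfe2}; pulling it out in front yields \eqref{eq: ruella zeta funct eq2}. The argument is essentially algebraic bookkeeping; the only mildly delicate point is tracking signs carefully when converting products like $\sin(\pi(s-k))\sin(\pi(s+k))$ into $\sin(\pi(s+k))\sin(\pi(-s+k))$ so that the exponent $m(2g-2-\rho+\tau)-\tau_0$ is reached without a spurious $\pm$ factor, which I expect to be the main place where errors can creep in.
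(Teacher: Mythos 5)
Your proposal is correct and follows essentially the same route as the paper: factor $\varphi=L\tilde\varphi$, compute $L(s)L(-s)$ via the reflection and recursion formulas for $\Gamma$, cancel the rational $\tau_0$-power factor of $H(s)$ against it, and absorb the leftover constants $2^{2m(2g-2)}d(1)^{-2}\prod\sin(\pi\beta_{jp})^{-2}$ into $c(\chi,\Gamma)^{-2}$. The intermediate identities you record (including the sign bookkeeping in $\sin(\pi(s-k))\sin(\pi(s+k))=-\sin(\pi(s+k))\sin(\pi(-s+k))$) all check out and agree with the paper's displayed computation.
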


\begin{proof}
Recall that $\varphi(s)= L(s) \tilde{\varphi}(s)$, where $L(s)$ is given by \eqref{eqPhiA}.
Then, with the notation from Proposition \ref{prop: funct eq R zeta}, we can write \eqref{eq: ruella zeta funct eq} as
\[R(s)\tilde\varphi(s) = \left(R(-s)\tilde\varphi(-s)\right)^{-1}H(s) (L(-s) L(s))^{-1}\]
where
\[\left(L(s)L(-s)\right)^{-1}
= d(1)^{-2} \left(\frac{\Gamma\left(s-\frac{1}{2}\right) \Gamma\left(-s-\frac{1}{2}\right)\Gamma\left(s\right)\Gamma\left(-s\right)}
{\Gamma(s-k)\Gamma(-s-k)\Gamma(s+k)\Gamma(-s+k)}\right)^{-\tau_0}.\]
Since $\Gamma(s)\Gamma(-s)=-\pi/(s\sin(\pi s))$, we get that
\[\left(L(s)L(-s)\right)^{-1}\bigg(\frac{(s+k)(s-k)}{s^2(s-\frac{1}{2})(s+\frac{1}{2})}\bigg)^{\tau_0}
= d(1)^{-2} \bigg(\frac{\sin(\pi s)\cos(\pi s)}{\sin(\pi(s-k)) \sin(\pi(s+k))}
\frac{1}{s}\bigg)^{\tau_0}.\]
The claimed functional equation \eqref{eq: ruella zeta funct eq2} then follows from the formula for $H(s)$ given in \eqref{e:H}.

\end{proof}

The following lemma completes the proof of Proposition \ref{prop: funct eq R zeta}.

\begin{lemma} \label{lem: factors of funct eq Ruelle}
With the notation as above, for $s\in \mathbb{C}\setminus (-\infty, \infty)$, we have the following identities.
\begin{enumerate}[label=(\roman*)]
\item For the function $Z_{I}(s)$, we have that
\begin{equation} \label{eq: ide term ruelle}
\frac{Z_I(1+s)}{Z_I(-s)} \frac{Z_I(1-s)}{Z_I(s)}
= \bigg(4\sin(\pi(s+k))\sin(\pi(-s+k))\bigg)^{\frac{m\omega(\mathcal{F})}{2\pi}}.
\end{equation}

\item  For the function $Z_{\rm ell}(s)$, we have that
\begin{multline}\label{eq: ell term ruelle}
\frac{Z_{\rm ell}(1-s)}{Z_{\rm ell}(s)} \frac{Z_{\rm ell}(1+s)} {Z_{\rm ell}(-s)}
= 2^{-2m\sum_{j=1}^{\rho} (1-\frac{1}{\nu_j})} \big(\sin(\pi(s+k)) \sin(\pi(-s+k))\big)^{\sum_{j=1}^{\rho} \frac{m}{\nu_j}}
\\ \times \prod_{j=1}^{\rho} \prod_{\ell=1}^{\nu_j} \bigg(\sin\left(\pi \left(\frac{-s-k+\ell}{\nu_j}\right)\right)
\sin \left(\pi \left(\frac{s-k+\ell}{\nu_j}\right)\right)\bigg)^{-r_j(\ell)}.
\end{multline}

\item  For the function $Z_{{\rm par},1}(s)$, we have that
\begin{equation}\label{eq: par term ruelle}
\frac{Z_{{\rm par}, 1}(1-s)} {Z_{{\rm par}, 1}(s)}\frac{Z_{{\rm par}, 1}(1+s)} {Z_{{\rm par}, 1}(-s)}
= 2^{-2m\tau}
\bigg(\prod_{j=1}^\tau \prod_{p=m_j+1}^{m} (\sin(\pi \beta_{jp}))\bigg)^{-2}
\bigg(\frac{(-s-k)(s-k)}{s(-s)(s+\frac{1}{2})(-s+\frac{1}{2})}\bigg)^{\tau_0}.
\end{equation}
\end{enumerate}
\end{lemma}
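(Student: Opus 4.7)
The strategy is to derive each of the three identities by inverting the corresponding formula from Proposition \ref{prop: funct eq sel zeta} at both $s$ and at $-s$, and multiplying. Observe that substituting $s\mapsto -s$ in the formula for $Z_X(s)/Z_X(1-s)$ yields the formula for $Z_X(-s)/Z_X(1+s)$, so the target ratio $Z_X(1-s)Z_X(1+s)/[Z_X(s)Z_X(-s)]$ is the reciprocal of the product of these two expressions. The recurring tools are the reflection formulas $\Gamma(z)\Gamma(1-z) = \pi/\sin(\pi z)$ and $\Gamma(z)\Gamma(-z) = -\pi/[z\sin(\pi z)]$, the Barnes recursion $G(z+1) = \Gamma(z)G(z)$, and the oddness of $\sin$.

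For (i), starting from \eqref{e:ZI_fe} and writing $C = m\omega(\mathcal{F})/(2\pi)$, the $\log(2\pi)$ coefficients $(2s-1)$ and $(-2s-1)$ sum to $-2$, contributing $(2\pi)^{2C}$ after inversion. The $k\log(\sin/\sin)$ contributions cancel because $\sin(-x) = -\sin(x)$ makes the sine quotient at $-s$ the reciprocal of the one at $s$. Applying $G(1+z) = \Gamma(z)G(z)$ collapses the Barnes $G$-quotient to $[\Gamma(s+k)\Gamma(s-k)\Gamma(-s+k)\Gamma(-s-k)]^{-1}$, which via $\Gamma(z)\Gamma(-z)$ equals $(s+k)(s-k)\sin(\pi(s+k))\sin(\pi(s-k))/\pi^2$; and the separate Gamma quotient in \eqref{e:ZI_fe} simplifies via both reflection formulas to $1/[(s+k)(s-k)]^2$. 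Combining these ingredients and inverting yields $[4\sin(\pi(s+k))\sin(\pi(s-k))]^C$, which is \eqref{eq: ide term ruelle} modulo the identification $\sin(\pi(-s+k))=-\sin(\pi(s-k))$.

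The same pattern handles (ii) and (iii). In (ii), from \eqref{e:Zell_fe}, the overall power of $2$ and the $\sin(\pi(s+k))^{-m/\nu_j}$ factor combine with their $s\mapsto -s$ counterparts to give $2^{2m(1-1/\nu_j)}$ and the claimed sine product with exponent $\sum m/\nu_j$, while the $(\sin(\pi(s-k))/\sin(\pi(s+k)))^{(1/2)m(1-1/\nu_j)}$ ratio becomes trivial upon symmetrizing. The remaining double product over $\ell$ is the delicate step: the $\alpha_j(\ell)/\nu_j$-powered sines from the formula at $s$ combine with those from the formula at $-s$, and after a reindexing $\ell \mapsto \nu_j - \ell$ together with the periodicity $r_j(\nu_j) = r_j(0)$ and the identity \eqref{eq: r(l) identity}, the $\alpha_j(\ell)$-contributions cancel pairwise, leaving exactly the $-r_j(\ell)$-powered factors stated in \eqref{eq: ell term ruelle}. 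In (iii), from \eqref{e:Zpar_fe}, the $(m/2-\beta_j)$-powered Gamma ratios vanish upon symmetrizing, the sign $(-1)^{(1/2)\mathrm{tr}(I_{\tau_0} - \Phi(1/2))}$ squares to $1$, the $(\sin(\pi\beta_{jp}))^{1-2s}$ factors become $(\sin(\pi\beta_{jp}))^{-2}$, and the residual $\tau_0$-th power of the Gamma quotient reduces by the two reflection formulas to $((-s-k)(s-k)/[s(-s)(s+\tfrac{1}{2})(-s+\tfrac{1}{2})])^{\tau_0}$.

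The main technical obstacle lies in part (ii): the two copies of the elliptic product (at $s$ and at $-s$) have $\ell$ ranging over $\{0,\ldots,\nu_j-1\}$ in one case and $\{1,\ldots,\nu_j\}$ in the other, and one must carefully match the $\alpha_j(\ell)/\nu_j$-exponents across the two copies, invoking the periodicity of $r_j$ and the identity \eqref{eq: r(l) identity}, so as to recover the claimed exponent $-r_j(\ell)$ on each sine factor.
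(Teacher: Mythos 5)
Your proposal is correct and follows essentially the same route as the paper: combine the Proposition \ref{prop: funct eq sel zeta} factors at $s$ and at $-s$, then simplify with $G(z+1)=\Gamma(z)G(z)$, $\Gamma(z+1)=z\Gamma(z)$ and the reflection formula; your part (ii), which the paper omits entirely, correctly isolates the cancellation of the $\alpha_j(\ell)/\nu_j$ exponents (though the cleanest mechanism is simply that the index sets $\{0,\dots,\nu_j-1\}$ and $\{1,\dots,\nu_j\}$ agree modulo $\nu_j$, the paired sine products are $\nu_j$-periodic in $\ell$, and $\alpha_j(\nu_j)=\alpha_j(0)$, rather than the reindexing $\ell\mapsto\nu_j-\ell$ or the identity \eqref{eq: r(l) identity}). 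The only caveat is the sign you flag in (i) --- $\sin(\pi(-s+k))$ versus $-\sin(\pi(s-k))$ raised to the non-integer power $m\omega(\mathcal{F})/(2\pi)$ --- which is a principal-branch bookkeeping point that the paper's own computation also glosses over.
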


\begin{proof}
The proof involves direct, albeit involved, manipulations of the special functions $\Gamma(s)$ and $G(s)$.
We start with the formulas from Proposition \ref{prop: funct eq sel zeta}.

First we consider the identity contribution.  Recall from \eqref{e:ZI_fe} that for $s\in \mathbb{C}\setminus (-\infty, \infty)$
we can write
\begin{align*}
&\frac{Z_{I}(1+s)Z_I(1-s)}{Z_I(s) Z_I(-s)}
\\ &= \exp\bigg\{\frac{m\omega(\mathcal{F})}{2\pi} \bigg(2\log(2\pi)
- \frac{1}{2}\log\frac{\Gamma\left(1-s+k\right)}{\Gamma\left(s+k\right)}\frac{\Gamma\left(1-s-k\right)}{\Gamma\left(s-k\right)}\frac{\Gamma\left(1+s+k\right)}{\Gamma\left(-s+k\right)}\frac{\Gamma\left(1+s-k\right)}{\Gamma\left(-s-k\right)}
\\ & \hskip .15in - \log  \frac{G(1-s+k)G(1-s-k)}{G(s+k)G(s-k)}\frac{G(1+s+k)G(1+s-k)}{G(-s+k)G(-s-k)}
\bigg)\bigg\}
\end{align*}
Recall that $G(s+1) = \Gamma(s) G(s)$ and that $\Gamma(s)\Gamma(1-s) = \pi/\sin(\pi s)$.  Then
\begin{align*}
\frac{Z_{I}(1+s)Z_I(1-s)}{Z_I(s) Z_I(-s)}
&= \exp\bigg\{\frac{m\omega(\mathcal{F})}{2\pi} \bigg(2\log 2+\log\big(\sin(\pi(-s+k))\sin(\pi(s+k))\big)\bigg)\bigg\}
\\& = (4\sin(\pi(-s+k))\sin(\pi(s+k)))^{\frac{m\omega(\mathcal{F})}{2\pi}},
\end{align*}
which proves \eqref{eq: ide term ruelle}.

For the elliptic contribution.  It is straightforward to begin with
\eqref{e:Zell_fe} and then simplifying it to get \eqref{eq: ell term ruelle} assuming
that $s\in \mathbb{C}\setminus (-\infty, \infty)$.  For the sake of space, we will omit
these computations.

It remains to analyze the parabolic contribution.
For $s\in \mathbb{C}\setminus (-\infty, \infty)$, we begin with \eqref{e:Zpar_fe} to get that
\begin{align*}
\frac{Z_{{\rm par}, 1}(1-s)} {Z_{{\rm par}, 1}(s)}
&\frac{Z_{{\rm par}, 1}(1+s)} {Z_{{\rm par}, 1}(-s)}
= 2^{-2m\tau}
\prod_{j=1}^\tau \prod_{p=m_j+1}^{m} (\sin(\pi \beta_{jp}))^{-2}
\\& \times
\bigg(\frac{\Gamma\left(s-k\right)}{\Gamma\left(1-s-k\right)}\frac{\Gamma\left(-s-k\right)}{\Gamma\left(1+s-k\right)}
\frac{\Gamma\left(1-s\right)\Gamma\left(\frac{3}{2}-s\right)}{\Gamma\left(s\right)\Gamma\left(s+\frac{1}{2}\right)}
\frac{\Gamma\left(1+s\right)\Gamma\left(\frac{3}{2}+s\right)}{\Gamma\left(-s\right)\Gamma\left(-s+\frac{1}{2}\right)}\bigg)^{-\tau_0}.
\end{align*}
By repeatedly using the identity $\Gamma(s+1)=s\Gamma(s)$, expression \eqref{eq: par term ruelle} follows.
\end{proof}

\subsection{The order of the divisor of $R(s;\chi)$ and the lead term at $s=0$}

In this section  we will compute ${\rm ord}_R(0)$, the order of the divisor of $R(s; \chi)$ at $s=0$.  Also, we will evaluate the
constant term $s^{-{\rm ord}_R(0)} R(s; \chi)$ as $s\to 0$, up to sign.
As we will show below, ${\rm ord}_R(0)$ depends on the topology of the space $M=\overline{\Gamma}\backslash \mathbb{H}$,
the degree of singularity of $\chi$ on the generators of the parabolic and elliptic subgroups of $\Gamma$, and
the order of the divisor of the Dirichlet series part $\tilde{\varphi}(s;\chi)$ of the scattering matrix $\varphi(s;\chi)$ at $s=0$.
Throughout we will rely on the notation established in section \ref{sec: mult systems}.  Since
the multiplier $\chi$ will be fixed, we will omit $\chi$ from the notation.

The main results of this section are summarized in the following theorem.

\begin{theorem}\label{thm:Ruelle_vanishing_constant}
With notation as above, we have that
\begin{equation*}\label{e:Ruelle_order}
{\rm ord}_R(0) = -n_0+\frac{1}{2}{\rm ord}_{H_1}(0)
\end{equation*}
where
\begin{equation*}\label{e:H1_order}
{\rm ord}_{H_1}(0) = \begin{cases} 0 & \text{ when } k\notin \mathbb{Z}, \\
2\big(m(2g-2+\rho+\tau)-\tau_0-\tilde{\tau}_0\big) & \text{ when } k\in \mathbb{Z}. \end{cases}
\end{equation*}
Furthermore, when $k\notin\mathbb{Z}$, we have that
\begin{align}\label{e:Ruelle_limit_square_knotint}\notag
\lim_{s\to 0} \big(s^{2n_0} R(s)^2\big)
&= a_{n_0}^{-2} c(\chi, \Gamma)^{-2}
\\& \times \pi^{\tau_0}
(\sin(\pi k))^{2m(2g-2-\rho+\tau)-2\tau_0}
\prod_{j=1}^{\rho} \prod_{\ell=1}^{\nu_j} \sin\left(\pi \left(\frac{-k+\ell}{\nu_j}\right)\right)^{-2r_j(\ell)}.
\end{align}
In the case when $k\in \mathbb{Z}$, we have that
\begin{align}\label{e:Ruelle_limit_square_kint}\notag
\lim_{s\to 0} \big(s^{-2{\rm ord}_R(0)}R(s)^2\big)
&= a_{n_0}^{-2} c(\chi, \Gamma)^{-2}
\\& \times  \pi^{{\rm ord}_{H_1}(0)+\tau_0}
\prod_{j=1}^{\rho}\nu_j^{2r_j(k_j)}
\prod_{j=1}^{\rho} \prod_{\substack{\ell\in \{1, \ldots, \nu_j\}\\ \ell\neq k_j}} \sin\left(\pi \left(\frac{-k+\ell}{\nu_j}\right)\right)^{-2r_j(\ell)},
\end{align}
where $n_0 = {\rm ord}_{\tilde{\varphi}}(0)$, $a_{n_0} = \lim_{s\to 0} (s^{-n_0}\tilde{\varphi}(s))$ and $c(\chi,\Gamma)$ is given by \eqref{e:c-RZfe2}.
In the last product of \eqref{e:Ruelle_limit_square_kint}, $k_j$ is an integer between $1$ and $\nu_j$ such that $k_j\equiv k\bmod{\nu_j}$.
\end{theorem}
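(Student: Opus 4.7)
The plan is to unfold the product form of the functional equation from Corollary \ref{cor:RZ_fe},
\[ R(s) R(-s) \tilde{\varphi}(s) \tilde{\varphi}(-s) = c(\chi,\Gamma)^{-2} H_1(s), \]
and read off the vanishing order and leading coefficient of $R^2$ at $s=0$. Since $R(-s)$ and $\tilde{\varphi}(-s)$ vanish at $s=0$ to the same orders as $R(s)$ and $\tilde{\varphi}(s)$, comparing orders on both sides gives $2\,{\rm ord}_R(0) + 2n_0 = {\rm ord}_{H_1}(0)$, which is the first formula of the theorem.

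Next I would carry out a factor-by-factor analysis of the explicit expression \eqref{eq: ruella zeta funct eq2_factor} for $H_1(s)$ to compute ${\rm ord}_{H_1}(0)$. The last factor $(\sin(\pi s)\cos(\pi s)/s)^{\tau_0}$ is holomorphic and nonzero at $s=0$, with value $\pi^{\tau_0}$. The first factor vanishes at $s=0$ iff $\sin(\pi k)=0$, i.e.\ iff $k\in\mathbb{Z}$, in which case both $\sin(\pi(s\pm k))$ vanish to order one. In the elliptic product, $\sin(\pi((\pm s - k + \ell)/\nu_j))|_{s=0}$ is zero iff $\ell \equiv k \pmod{\nu_j}$, which again forces $k\in\mathbb{Z}$ and selects the unique $\ell=k_j\in\{1,\ldots,\nu_j\}$. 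Counting orders then gives ${\rm ord}_{H_1}(0)=0$ for $k\notin\mathbb{Z}$, and for $k\in\mathbb{Z}$ the sum of contributions from the first factor and from the $\ell=k_j$ terms (using $\sum_j r_j(k_j)=\tilde{\tau}_0$ by \eqref{e:sum_rjk_int}) produces the claimed formula.

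For the leading coefficient of $R(s)^2$, I would use that the leading coefficient of the palindrome $R(s)R(-s)$ at $s=0$ equals $(-1)^{{\rm ord}_R(0)}$ times the square of the lead coefficient of $R(s)$, and analogously $\tilde{\varphi}(s)\tilde{\varphi}(-s)$ contributes $(-1)^{n_0}a_{n_0}^2$ at leading order. The lead coefficient of $H_1$ at $s=0$ is then extracted from \eqref{eq: ruella zeta funct eq2_factor}: for $k\notin\mathbb{Z}$ it is simply $H_1(0)$, which yields \eqref{e:Ruelle_limit_square_knotint} directly. For $k\in\mathbb{Z}$, the key identities are $\sin(\pi(s+k))\sin(\pi(-s+k)) = -\sin^2(\pi s)$ and, for $\ell=k_j$ after writing $k_j - k = \nu_j n_j$ with $n_j\in\mathbb{Z}$, $\sin(\pi((-s-k+k_j)/\nu_j))\sin(\pi((s-k+k_j)/\nu_j)) = -\sin^2(\pi s/\nu_j)$; applying $\sin(\pi s/\nu_j) \sim \pi s/\nu_j$ then produces the factors $\nu_j^{2r_j(k_j)}$ and $\pi^{{\rm ord}_{H_1}(0)+\tau_0}$ appearing in \eqref{e:Ruelle_limit_square_kint}.

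The main obstacle is the sign bookkeeping: each $-\sin^2$ factor in $H_1$ contributes a $(-1)$, and these signs must combine with $(-1)^{{\rm ord}_R(0)+n_0}$ coming from the palindromes $R(s)R(-s)$ and $\tilde{\varphi}(s)\tilde{\varphi}(-s)$ to yield $+1$. A parity count shows that all these contributions pair off to $(-1)^{2\tilde{\tau}_0}=+1$, confirming that the right-hand sides of \eqref{e:Ruelle_limit_square_knotint} and \eqref{e:Ruelle_limit_square_kint} appear with the correct positive sign as stated.
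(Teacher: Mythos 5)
Your proposal follows the paper's proof essentially verbatim: both start from the functional equation of Corollary \ref{cor:RZ_fe}, compare orders of vanishing to obtain $2\,{\rm ord}_R(0)+2n_0={\rm ord}_{H_1}(0)$, and then determine ${\rm ord}_{H_1}(0)$ and the leading coefficient of $H_1$ factor by factor, using $\sin(\pi(s+k))\sin(\pi(-s+k))=-\sin^2(\pi s)$ and the analogous identity for the $\ell=k_j$ elliptic terms when $k\in\mathbb{Z}$. Even the sign bookkeeping is the same in substance --- the paper writes the lead coefficient as $(-\pi)^{{\rm ord}_{H_1}(0)}\pi^{\tau_0}\cdots$ and uses that ${\rm ord}_{H_1}(0)$ is even, which is your parity count.
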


\begin{remark}
Recall that in Section \ref{ss:scattering} it is shown that
$d(1)$ and $a_{n_0}$ are real numbers.  As a result, the right hand sides of \eqref{e:Ruelle_limit_square_knotint} and \eqref{e:Ruelle_limit_square_kint}
are positive real numbers.  Therefore, we have determined the values of the
limits
\begin{equation}\label{eq:un_squared_limits}
\lim_{s\to 0} \big(s^{n_0} R(s)\big)
\,\,\,\,\,
\text{\rm and}
\,\,\,\,\,
\lim_{s\to 0} \big(s^{-{\rm ord}_R(0)}R(s)\big)
\end{equation}
uniquely up to a factor of $\pm 1$.
A similar sign ambiguity also arises in \cite{Fr86c} where the author considers the case when $M$ is compact
and $\chi$ is a unitary representation.  In our analysis, the challenge in determining the sign of the limits
\eqref{eq:un_squared_limits} occurs when
imposing necessary restrictions to ensure that all branches of the logarithm are principal.  It may be possible
to overcome the obstacles for some cases when $k\in\mathbb{Z}$; we carry out such
computations in Section \ref{sss:leadatsequal0} for $k=0$.  However, as stated in the introduction, the
value of R-torsion itself is defined up to sign, and ultimately we will use Theorem \ref{thm:Ruelle_vanishing_constant} to
obtain further instances of the Fried conjecture; see Sections \ref{sec: Kitano rep} and \ref{sec: Yam appl} below.
As such, the results in \eqref{e:Ruelle_limit_square_knotint} and \eqref{e:Ruelle_limit_square_kint} suffice for
our purposes.
\end{remark}

\begin{remark}\label{rem: order of van}\rm
There are various interesting examples of Theorem \ref{thm:Ruelle_vanishing_constant}.
For example, if $\Gamma$ is co-compact and the weight is an even integer, then
$\ord_R(0)$ depends only upon the topology of the surface $M=\overline{\Gamma}\backslash \mathbb{H}$
as well as on the elliptic degree of
singularity of $\chi$.

If $g\geq 1$, and $\chi$ acts non-trivially on elliptic elements, then $s=0$ is a zero
of $R(s;\chi)$ of order $m(2g-2) + m\rho-\tilde{\tau}_0>0$.
However, when $\Gamma$ is co-compact and the weight is not an even integer, then the
order of the divisor of $R(s;\chi)$ at $s=0$ is zero, meaning that $R(s;\chi)$ is non-vanishing at $s=0$.

If $\Gamma$ is non-compact and the degree $\tau_0$ of singularity of $\chi$ with weight $2k\notin 2\mathbb{Z}$
equals $1$ then, from \eqref{eq: n0 when tau0=1}, we have that $\ord_R(0)=-n_0 = -2$.

When combining these examples, one sees that the order of the divisor of $R(s;\chi)$ at $s=0$ can be positive, negative, or zero.
\end{remark}

\subsubsection{Proof of Theorem \ref{thm:Ruelle_vanishing_constant}}


We begin with the
functional equation \eqref{eq: ruella zeta funct eq2} in Corollary \ref{cor:RZ_fe}, which states that
\begin{equation*}
R(s)R(-s) \tilde{\varphi}(s)\tilde{\varphi}(-s) = c(\chi,\Gamma)^{-2} H_{1}(s)
\end{equation*}
where $H_1(s)$ is given in \eqref{eq: ruella zeta funct eq2_factor} and $c(\chi, \Gamma)$ in \eqref{e:c-RZfe2}.
From \eqref{eq: vartilde at zero}, one has that
$$
\ord_{\tilde{\varphi}}(0)=n_0.
$$
Therefore we have $2{\rm ord}_R(0) + 2n_0 = {\rm ord}_{H_1}(0)$ and
\begin{equation}\label{e:ordR^2_ordH1}
(-1)^{{\rm ord}_R(0)+n_0} \bigg(\lim_{s\to 0}\big(s^{-{\rm ord}_R(0)-{\rm ord}_{\tilde{\varphi}}(0)}R(s)\tilde{\varphi}(s)\big)\bigg)^2
= c(\chi, \Gamma)^{-2} \lim_{s\to 0} s^{-{\rm ord}_{H_1}(0)} H_1(s).
\end{equation}
Recall from \eqref{eq: ruella zeta funct eq2_factor} that for $s\in \mathbb{C}\setminus(-\infty, \infty)$,
$H_{1}(s)$ is given as
\begin{align*}
H_1(s)
&=
\big(\sin(\pi(s+k))\sin(\pi(-s+k))\big)^{m(2g-2-\rho+\tau)-\tau_0}
\\ &\times \prod_{j=1}^{\rho}
\prod_{\ell=1}^{\nu_j} \bigg(\sin\left(\pi \left(\frac{-s-k+\ell}{\nu_j}\right)\right)
\sin \left(\pi \left(\frac{s-k+\ell}{\nu_j}\right)\right)\bigg)^{-r_j(\ell)}
\left(\frac{\sin(\pi s) \cos(\pi s)}{s}\right)^{\tau_0}.
\end{align*}

Consider the case when $k\not\in \mathbb{Z}$.  It is immediate that
$H_1(s)$ is not vanishing as $s\to 0$, meaning that
${\rm ord}_{H_1}(0)=0$, hence ${\rm ord}_R(0) = -n_0$.
Moreover, we have that
\[
\lim_{s\to 0} H_1(s)
= 2^{-2m\sum_{j=1}^{\rho}(1-\frac{1}{\nu_j})} \pi^{\tau_0}
(\sin(\pi k))^{2m(2g-2-\rho+\tau)-2\tau_0}
\prod_{j=1}^{\rho} \prod_{\ell=1}^{\nu_j} \sin\left(\pi \left(\frac{-k+\ell}{\nu_j}\right)\right)^{-2r_j(\ell)},
\]
which evidentially is a positive real number.
We have
\begin{equation*}
\lim_{s\to 0} \big(s^{2n_0} R(s)^2\big)
= a_{n_0}^{-2} c(\chi, \Gamma)^{-2} \lim_{s\to 0} H_1(s),
\end{equation*}
which proves \eqref{e:Ruelle_limit_square_knotint}.

Let us now assume that $k\in \mathbb{Z}$.
For $s\in \mathbb{C}\setminus (-\infty, \infty)$, we have that
\begin{align*}
H_1(s)
&=(-\sin^2(\pi s))^{m(2g-2-\rho+\tau)-\tau_0} \prod_{j=1}^{\rho} \bigg(-\sin^2 \left(\pi \frac{s}{\nu_j}\right)\bigg)^{-r_j(k_j)}
\\& \times \prod_{j=1}^{\rho}
\prod_{\substack{\ell\in \{1, \ldots, \nu_j\}\\ \ell\neq k_j}} \bigg(\sin\left(\pi \left(\frac{-s-k+\ell}{\nu_j}\right)\right)
\sin \left(\pi \left(\frac{s-k+\ell}{\nu_j}\right)\right)\bigg)^{-r_j(\ell)}
\left(\frac{\sin(\pi s) \cos(\pi s)}{s}\right)^{\tau_0}.
\end{align*}
Recall that $k_j\in \{1, \ldots, \nu_j\}$ is such that $k_j\equiv k\bmod{\nu_j}$.
With this expression we conclude, upon taking $s\to 0$ and using \eqref{e:sum_rjk_int}, that
\begin{equation*}
{\rm ord}_{H_1}(0) = 2m(2g-2+\rho+\tau)-2\tau_0-2\sum_{j=1}^{\rho} r_j(k_j)
= 2\big(m(2g-2+\rho+\tau)-\tau_0 - \tilde{\tau}_0\big).
\end{equation*}
From \eqref{e:ordR^2_ordH1}, we have that ${\rm ord}_{R}(0) = \big(m(2g-2+\rho+\tau)-\tau_0 - \tilde{\tau}_0\big)-n_0$, so then
\begin{align*}
\lim_{s\to 0} \big(s^{-2{\rm ord}_R(0)}R(s)^2\big)
=& a_{n_0}^{-2} c(\chi, \Gamma)^{-2}
\\& \times  (-\pi)^{{\rm ord}_{H_1}(0)} \pi^{\tau_0}
\prod_{j=1}^{\rho}\nu_j^{2r_j(k_j)}
\prod_{j=1}^{\rho} \prod_{\substack{\ell\in \{1, \ldots, \nu_j\}\\ \ell\neq k_j}}
\sin\left(\pi \left(\frac{-k+\ell}{\nu_j}\right)\right)^{-2r_j(\ell)},
\end{align*}
which proves \eqref{e:Ruelle_limit_square_kint} and
completes the proof of Theorem \ref{thm:Ruelle_vanishing_constant}.

\subsubsection{The lead term at $s=0$ when $k=0$}\label{sss:leadatsequal0}

For all integers $k$, the Selberg zeta function $Z(s)$ has a zero at $s=1$ if and only if $0$ is an eigenvalue
of the weighted Laplacian. By \cite[p.371]{Hejhal83}, the $L^2$-eigenspace of the eigenvalue $0$ of the Laplacian
$\Delta_{2k}$ for any $k\in \mathbb{Z}$ is non-empty and one-dimensional.  Equivalently, we can say that
$Z(s)$ has a simple zero at $s=1$ for all $k\in \mathbb{Z}$.
As a result, we have that
\begin{equation*}
\lim_{s\to 0} \frac{Z(1-s)}{Z(1+s)} = -1.
\end{equation*}
By \eqref{e:Z-fe} and \eqref{eq: R funct equations 2}, we can write that
\begin{equation*}
R(s) = \frac{Z(s)}{Z(s+1)} = \frac{1}{\kappa(s)}\frac{Z(1-s)}{Z(1+s)}.
\end{equation*}
Thus ${\rm ord}_R(0) = -{\rm ord}_{\kappa}(0)$, and then
\begin{align}\label{eq. limit of kappa}\notag
\lim_{s\to 0} \big(s^{-{\rm ord}_R(0)} R(s)\big)
&= \lim_{s\to 0} \bigg(s^{-{\rm ord}_R(0)} \frac{1}{\kappa(s)}\bigg) \lim_{s\to 0} \bigg(\frac{Z(1-s)}{Z(1+s)}\bigg)
\\&= - \lim_{s\to 0} \bigg(s^{-{\rm ord}_R(0)} \frac{1}{\kappa(s)}\bigg).
\end{align}
Therefore, in order to determine the lead term at $s=0$ for integral values of $k$ it suffices to compute
the limit \eqref{eq. limit of kappa}.

When $k=0$,  we can evaluate \eqref{eq. limit of kappa} by using the following proposition.

\begin{proposition}\label{cor:lim_R_at0}
Assume $k=0$, so then $\chi$ is an $m$-dimensional unitary representation of $\Gamma$.
Then with notation as above we have that
\begin{align}\label{e:limR_k=0}\notag
\lim_{s\to 0} (s^{-{\rm ord}_R(0)} R(s))
&= (-1)^{\frac{1}{2}{\rm tr}(I_{\tau_0} -\Phi(\frac{1}{2}))+\tau_0+1}
(d(1)a_{n_0})^{-1}
2^{m(2g-2)}
\pi^{m(2g-2+\rho +\tau)-\frac{1}{2}\tau_0-\tilde{\tau}_0}
\\& \times \prod_{j=1}^\rho\bigg(\frac{\nu_j^{r_j}}{\prod_{\ell=1}^{\nu_j-1} \sin\left(\pi \left(\frac{\ell}{\nu_j}\right)\right)^{r_j(\ell)}}\bigg)
\prod_{j=1}^\tau \prod_{p=m_j+1}^{m} (\sin(\pi \beta_{jp}))^{-1}.
\end{align}
\end{proposition}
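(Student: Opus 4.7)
The plan is to derive \eqref{e:limR_k=0} by combining the observation from \eqref{eq. limit of kappa}, which reduces the problem to the limit
\[
\lim_{s\to 0}\bigl(s^{-{\rm ord}_R(0)}R(s)\bigr) \;=\; -\lim_{s\to 0}\bigl(s^{-{\rm ord}_R(0)}\,\kappa(s)^{-1}\bigr),
\]
with the explicit factorization
\[
\kappa(s) \;=\; \frac{Z_I(s)}{Z_I(1-s)}\,\frac{Z_{\rm ell}(s)}{Z_{\rm ell}(1-s)}\,\frac{Z_{{\rm par},1}(s)}{Z_{{\rm par},1}(1-s)}\,L(s)\,\tilde\varphi(s)
\]
coming from Proposition \ref{prop: funct eq sel zeta} together with $\varphi(s)=L(s)\tilde\varphi(s)$.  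Setting $k=0$ in the three ratios given in \eqref{e:ZI_fe}, \eqref{e:Zell_fe}, \eqref{e:Zpar_fe} causes all factors of the form $\sin(\pi(s\mp k))/\sin(\pi(s\pm k))$ to equal $1$, and it suffices to extract the leading term of each factor as $s\to 0$.

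For the identity contribution, using $\Gamma(s)\sim 1/s$ and $G(s)\sim s$ as $s\to 0$ (the latter via $G(s+1)=\Gamma(s)G(s)$ with $G(1)=1$), a direct computation with \eqref{e:ZI_fe} gives
\[
\frac{Z_I(s)}{Z_I(1-s)}\;\sim\;(2\pi)^{-c}\,s^{-c},\qquad c=\frac{m\,\omega(\mathcal{F})}{2\pi}.
\]
For the elliptic contribution, the only vanishing factors as $s\to 0$ are the global $\sin(\pi s)^{-m\sum 1/\nu_j}$ and, inside each $j$-product, the term with $\ell=\nu_j$ in the numerator (using $\sin(\pi-x)=\sin(x)$) and $\ell=0$ in the denominator; since $r_j(\nu_j)=r_j(0)=r_j$ when $k=0$, the net vanishing exponent from the $j$-th factor is $r_j$, giving an overall $s^{\tilde\tau_0}$ contribution.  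All non-vanishing factors consolidate into $\prod_{j,\ell\in\{1,\dots,\nu_j-1\}}\sin(\pi\ell/\nu_j)^{r_j(\ell)}$ and explicit powers of $2$, $\pi$ and $\nu_j$.  For the parabolic contribution, after cancellation of $\Gamma(s)/\Gamma(1-s)$ against the reciprocal pair, the leading behavior is the finite value obtained from $\Gamma(3/2)/\Gamma(1/2)=1/2$, multiplied by $2^{m\tau}\prod_{j,p}\sin(\pi\beta_{jp})$ and the sign $(-1)^{\frac{1}{2}{\rm tr}(I_{\tau_0}-\Phi(1/2))}$.  Finally, with $k=0$ the function $L(s)$ collapses to $\bigl(\Gamma(s-\tfrac12)/\Gamma(s)\bigr)^{\tau_0}d(1)e^{c_1 s}$, so $L(s)\sim (-2\sqrt{\pi})^{\tau_0}s^{\tau_0}d(1)$, and $\tilde\varphi(s)\sim a_{n_0}s^{n_0}$ by definition.

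Combining all four ingredients yields an asymptotic expression $\kappa(s)\sim C\,s^{-{\rm ord}_R(0)}$; the bookkeeping of the exponent uses $\omega(\mathcal{F})/(2\pi)=2g-2+\sum(1-1/\nu_j)+\tau$ from \eqref{eq: volume of F}, after which the exponent of $s$ simplifies to $-m(2g-2+\rho+\tau)+\tilde\tau_0+\tau_0+n_0=-{\rm ord}_R(0)$, consistent with Theorem \ref{thm:Ruelle_vanishing_constant}.  Collecting the multiplicative constants $C$ separately by base ($2$, $\pi$, $\nu_j$, sines, sign factors, and $d(1)a_{n_0}$), then inverting and multiplying by the $-1$ from \eqref{eq. limit of kappa}, gives exactly the product formula \eqref{e:limR_k=0}.

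The main obstacle is purely bookkeeping: keeping the many powers of $2$, $\pi$, $\nu_j$ and sines from the three quotients aligned with the principal branch of the logarithm stipulated in Sections \ref{sec:Selberg_and_Ruelle}--\ref{sec:regularizedZeta}, and correctly combining the sign $(-1)^{\tau_0}$ coming from $(-2\sqrt{\pi})^{\tau_0}$ in $L(s)$, the intrinsic sign $(-1)^{\frac{1}{2}{\rm tr}(I_{\tau_0}-\Phi(1/2))}$ from the parabolic factor \eqref{e:Zpar_fe}, and the additional factor $-1$ from $Z(1-s)/Z(1+s)\to -1$.  No analytic subtleties appear beyond those already treated in the proof of Theorem \ref{thm:Ruelle_vanishing_constant}; the calculation is essentially a more refined version of the same limit analysis, made possible in the case $k=0$ by the known simple zero of $Z(s)$ at $s=1$.
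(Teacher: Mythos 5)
Your proposal is correct and follows essentially the same route as the paper's proof: both reduce the problem via \eqref{eq. limit of kappa} to computing $\lim_{s\to 0^+} s^{\operatorname{ord}_R(0)}\kappa(s)$, specialize the three quotients \eqref{e:ZI_fe}, \eqref{e:Zell_fe}, \eqref{e:Zpar_fe} together with $L(s)\tilde{\varphi}(s)$ to $k=0$, and extract the leading asymptotics of each factor (the identity term via $\Gamma(s)/\Gamma(1-s)\sim s^{-1}$ and $G(s)/G(1-s)\sim s$, the elliptic term via $\sin(\pi(\nu_j-s)/\nu_j)=\sin(\pi s/\nu_j)$ and $r_j(\nu_j)=r_j(0)=r_j$, and the parabolic/$L$ terms via $\Gamma(3/2)/\Gamma(1/2)=1/2$ and $\Gamma(s-\tfrac12)/\Gamma(s)\sim -2\sqrt{\pi}\,s$). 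The paper merely writes out the consolidated formula \eqref{e:kappa_k=0} before passing to the limit, whereas you track each factor's leading term separately; the exponent and sign bookkeeping you describe agrees with the paper's.
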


\begin{proof}
Assume $s\in \mathbb{C}\setminus \big((-\infty,0] \cup [1, +\infty)\big)$.
From \eqref{e:ZI_fe} in Proposition \ref{prop: funct eq sel zeta} we can write that
\begin{equation*}
\frac{Z_{I}(s)}{Z_{I}(1-s)}
= \exp\bigg\{\frac{m\omega(\mathcal{F})}{2\pi} \bigg((-1+2s) \log(2\pi)
- \log\frac{\Gamma\left(s\right)}{\Gamma\left(1-s\right)}-2 \log \frac{G(s)}{G(1-s)}
\bigg)\bigg\},
\end{equation*}
where we have used that $G(s+1) = \Gamma(s)G(s)$.  Upon using the reflection formula for Gamma functions, we get that
\begin{equation*}
\frac{Z_{I}(s)}{Z_{I}(1-s)}=  (2\pi)^{(-1+2s)\frac{m\omega(\mathcal{F})}{2\pi}}  \bigg(\frac{\sin(\pi s)}{\pi}\bigg)^{-\frac{m\omega(\mathcal{F})}{2\pi}}
\bigg(\frac{G(1+s)}{G(1-s)}\bigg)^{-\frac{2m\omega(\mathcal{F})}{2\pi}}.
\end{equation*}
Since \eqref{e:Zell_fe} is well-defined for $s\in \mathbb{C}\setminus \big((-\infty,0] \cup [1, +\infty)\big)$, we
can use the identity $\sin\pi\left(\frac{\nu_j-s}{\nu_j}\right) = \sin\pi\left(\frac{s}{\nu_j}\right) $
together with \eqref{e:Zell_fe}, to arrive at
\begin{equation*}
\frac{Z_{\rm ell}(s)}{Z_{\rm ell}(1-s)}
=  2^{m\sum_{j=1}^{\rho} (1-\frac{1}{\nu_j})} \sin\left(\pi s\right)^{-m\sum_{j=1}^{\rho} \frac{1}{\nu_j}}
\prod_{j=1}^{\rho} \sin\left(\pi \frac{s}{\nu_j}\right)^{r_j(0)} \prod_{\ell=1}^{\nu_j-1} \frac{\sin\left(\pi \left(\frac{-s+\ell}{\nu_j}\right)\right)^{\frac{\alpha_j(\ell)}{\nu_j}+r_j(\ell)}}{\sin\left(\pi \left(\frac{s+\ell}{\nu_j}\right)\right)^{\frac{\alpha_j(\ell)}{\nu_j} }},
\end{equation*}
and by \eqref{e:Zpar_fe}, for $s\in \mathbb{C}\setminus \big((-\infty,0] \cup [1, +\infty)\big)$ we can write
\[
\frac{Z_{{\rm par}, 1}(s)}{Z_{{\rm par}, 1}(1-s)} = 2^{-m\tau (2s-1)}
\prod_{j=1}^\tau \prod_{p=m_j+1}^{m} (\sin(\pi \beta_{jp}))^{1-2s}
(-1)^{\frac{1}{2}{\rm tr}(I_{\tau_0} -\Phi(\frac{1}{2}))}
\bigg(\frac{\Gamma\left(\frac{3}{2}-s\right)}{\Gamma\left(s+\frac{1}{2}\right)}\bigg)^{\tau_0}.
\]
Combining the formulas for the identity, elliptic and parabolic contribution we conclude,
for $s\in \mathbb{C}\setminus \big((-\infty, 0]\cup [1, +\infty)\big)$, that

\begin{align}\label{e:kappa_k=0}\notag
\kappa(s) &= \frac{Z_{I}(s)  Z_{\rm ell}(s) Z_{{\rm par}, 1}(s)}{Z_{I}(1-s)Z_{\rm ell}(1-s)   Z_{{\rm par}, 1}(1-s)}L(s) \tilde{\varphi}(s)
\\ & \notag= (2\pi)^{(-1+2s)\frac{m\omega(\mathcal{F})}{2\pi} }2^{-m\tau (2s-1)}  \pi^{\frac{m\omega(\mathcal{F})}{2\pi}} 2^{m\sum_{j=1}^{\rho} (1-\frac{1}{\nu_j})} \bigg(\frac{G(1+s)}{G(1-s)}\bigg)^{-2\frac{m\omega(\mathcal{F})}{2\pi}}
\\ & \notag\times \sin(\pi s)^{-m(\frac{\omega(\mathcal{F})}{2\pi}+\sum_{j=1}^{\rho} \frac{1}{\nu_j})+\tau_0}
\prod_{j=1}^{\rho} \sin\left(\pi \frac{s}{\nu_j}\right)^{r_j(0)} \prod_{\ell=1}^{\nu_j-1} \frac{\sin\left(\pi \left(\frac{-s+\ell}{\nu_j}\right)\right)^{\frac{\alpha_j(\ell)}{\nu_j}+r_j(\ell)}}{\sin\left(\pi \left(\frac{s+\ell}{\nu_j}\right)\right)^{\frac{\alpha_j(\ell)}{\nu_j} }}
\\&  \times
\prod_{j=1}^\tau \prod_{p=m_j+1}^{m} (\sin(\pi \beta_{jp}))^{1-2s}
(-1)^{\frac{1}{2}{\rm tr}(I_{\tau_0} -\Phi(\frac{1}{2}))+\tau_0}
\bigg(\frac{\Gamma\left(1-s\right)}{\cos(\pi s) \Gamma\left(s+\frac{1}{2}\right)} \bigg)^{\tau_0} e^{c_{1}s}d(1) \tilde{\varphi}(s).
\end{align}
By evaluating the limit as $s\to 0^+$ in \eqref{e:kappa_k=0}, we get that
\begin{align*}
\lim_{s\to 0} (s^{-{\rm ord}_R(0)} &R(s))
=- \bigg(\lim_{s\to 0^+} \big(s^{{\rm ord}_R(0)} \kappa(s) \big) \bigg)^{-1}
\\& = (-1)^{\frac{1}{2}{\rm tr}(I_{\tau_0} -\Phi(\frac{1}{2}))+\tau_0+1}
(d(1)a_{n_0})^{-1}
2^{m(2g-2)}
\pi^{m(2g-2+\rho +\tau)-\frac{1}{2}\tau_0-\sum_{j=1}^{\rho} r_j}
\\& \times \prod_{j=1}^\rho\bigg(\frac{\nu_j^{r_j}}{\prod_{\ell=1}^{\nu_j-1} \sin\left(\pi \left(\frac{\ell}{\nu_j}\right)\right)^{r_j(\ell)}}\bigg)
\prod_{j=1}^\tau \prod_{p=m_j+1}^{m} (\sin(\pi \beta_{jp}))^{-1}.
\end{align*}
Recall that $\sum_{j=1}^{\rho} r_j = \tilde{\tau}_0$.  With all this, the proof of the proposition is complete.
\end{proof}

\begin{remark}
Let us consider the case when
$k=0$ and $\chi$ is the trivial representation of dimension $m=1$.
As such, $\rho=\tilde{\tau}_0$ and $\tau_0=\tau$, so then
 the degree of the divisor of $R(s)$ at $s=0$ equals $2g-2-n_0$.
Furthermore, \eqref{e:limR_k=0} becomes
the statement that
\begin{equation}\label{eq: ruele limit s=0 teo case}
\lim_{s\to 0} \big(s^{-(2g-2-n_0)}R(s)\big)
= (-1)^{1 +\frac{1}{2}{\rm tr}(I_{\tau_0} -\Phi(\frac{1}{2}))+\tau }(d(1)a_{n_0})^{-1} (2\pi)^{(2g-2)}\pi^{\frac{\tau}{2}} \prod_{j=1}^{\rho}\nu_j.
\end{equation}
The scattering determinant can be written as
\begin{equation*}
\varphi(s)=\left(\frac{\Gamma\left(s-\frac{1}{2}\right)}{\Gamma(s)}\right)^{\tau}d(1)e^{c_1s} \left( a_{n_0}s^{n_0} + O(s^{n_0+1})\right)
\quad \text{ as $s \to 0$.}
\end{equation*}
Therefore, the degree of the divisor of $\varphi(s)$ at $s=0$ is $\tau+n_0$, and the lead term in the expansion of $\varphi(s)$ at $s=0$ is $(\Gamma(-\frac{1}{2}))^{\tau} d(1)a_{n_0}=(-2\sqrt{\pi})^{\tau}d(1)a_{n_0}$. Therefore, if we write
\begin{equation*}
\varphi(s)= \varphi_1(0)s^{\tau+n_0} + O(s^{\tau+n_0+1}) \quad \text{ as $s \to 0$,}
\end{equation*}
then \eqref{eq: ruele limit s=0 teo case} becomes
\begin{equation}\label{e:R0_teo}
\lim_{s\to 0}s^{-(2g-2+\tau-(\tau+n_0))}R(s)= (-1)^{1 +\frac{1}{2}{\rm tr}(I_{\tau_0} -\Phi(\frac{1}{2}))}(2\pi)^{2g-2+\tau}\varphi_1(0)^{-1}\prod_{j=1}^{\rho}\nu_j.
\end{equation}
The evaluation in \eqref{e:R0_teo} is
precisely \cite[Theorem 3.2]{Teo20}.
\end{remark}

\subsubsection{Variation in $k$}

\begin{remark}\label{rem: k to 0}
Let us consider the setting where the weight $2k$ is such that $k \in (0,1)$
and study the limiting behavior of \eqref{e:Ruelle_limit_square_knotint} as $k$ approaches zero.
In this case, ${\rm ord}_R(0) = -n_0$ and the right-hand side of \eqref{e:Ruelle_limit_square_knotint} is
\begin{align*}
\lim_{s\to 0}& \big(s^{2n_0} R(s)^2\big)
= a_{n_0}^{-2} d(1)^{-2} 2^{2m(2g-2)} \pi^{\tau_0}\prod_{j=1}^\tau \prod_{p=m_j+1}^{m} (\sin(\pi \beta_{jp}))^{-2}
\\& \times
(\sin(\pi k))^{2m(2g-2-\rho+\tau)-2\tau_0} \prod_{j=1}^{\rho} \sin\left(\pi\frac{k}{\nu_j}\right)^{-2r_j(0)}
\prod_{j=1}^{\rho} \prod_{\ell=1}^{\nu_j-1} \sin\left(\pi \left(\frac{-k+\ell}{\nu_j}\right)\right)^{-2r_j(\ell)}.
\end{align*}
In a slight abuse of notation, set
$$
\tilde{\tau}_0 = \sum_{j=1}^{\rho} r_j(0),
$$
which is not an elliptic degree of singularity for $k>0$, but as $k\downarrow 0$ can be viewed as such.
Also, in order to ease the notation, set
$$
\mathcal{R}_2(k):=\lim_{s\to 0} \big(s^{2n_0} R(s)^2\big),
$$
which we view as a function of $k$.
Then the order of vanishing of $\mathcal{R}_2(k)$ as $k\to 0$ is
\begin{equation*}
{\rm ord}_{\mathcal{R}_2}(0) = 2m(2g-2-\rho+\tau)-2\tau_0-2\tilde{\tau}_0.
\end{equation*}
Furthermore, from \eqref{e:Ruelle_limit_square_knotint} we get that
\begin{align}\label{eq:klimit}\notag
\lim_{k\to 0} \big(k^{-2m(2g-2-\rho+\tau)+2\tau_0+2\tilde{\tau}_0} &\mathcal{R}_2(k)\big)
= a_{n_0}^{-2} d(1)^{-2} 2^{2m(2g-2)} \pi^{2m(2g-2-\rho+\tau)-\tau_0-2\tilde{\tau_0}}
\\& \times \prod_{j=1}^\tau \prod_{p=m_j+1}^{m} (\sin(\pi \beta_{jp}))^{-2}
\prod_{j=1}^{\rho} \nu_j^{2r_j}
\prod_{j=1}^{\rho} \prod_{\ell=1}^{\nu_j-1} \sin\left(\pi \frac{\ell}{\nu_j}\right)^{-2r_j(\ell)}.
\end{align}
We find it very interesting that \eqref{eq:klimit}
is structurally similar to \eqref{e:Ruelle_limit_square_kint}, when taking into
account \eqref{e:c-RZfe2}.  In a sense, \eqref{eq:klimit} can be veiwed
as a type of (regularized) continuity result for $R(s; \chi_{k})$
when $s$ and $k$ each approach zero.  More precisely, we mean the following.

Let $R(s;\chi_k)$ be the Ruelle zeta function associated to an $m\times m$
unitary multiplier system $\chi_k$ of a weight $2k>0$ for  $2k\notin 2\mathbb{Z}$.  Assume that
$\Gamma$, the fixed Fuchsian group of the first kind, is such that for $j=1,\ldots \rho$, the system
$\chi_k(R_j)$ has eigenvalues $\exp\left(-\frac{2\pi i}{\nu_j}(k+\alpha_{jp})\right)$ for
$p=1,\ldots, m$; see   Section \ref{sec: mult systems}.  Let $R(s;\chi_0)$ denote the Ruelle
zeta function associated to an  $m$-dimensional unitary representation $\chi_0$ such that
$\chi_0(R_j)$ has eigenvalues $\exp\left(-\frac{2\pi i}{\nu_j}\alpha_{jp}\right)$ for $p=1,\ldots, m$ . Then
\begin{equation}\label{eq:regcontinuity}
\lim_{s\to 0}\left|\frac{R(s;\chi_0)}{s^{[m(2g-2+\rho+\tau) - \tau_0-\tilde{\tau}_0]-n_0}}\right|=
\lim_{k\rightarrow 0^{+}}\frac{1}{k^{[m(2g-2+\rho+\tau) - \tau_0-\tilde{\tau}_0]}}\left| \lim_{s\to 0} \frac{R(s;\chi_k)}{s^{-n_0}} \right|.
\end{equation}
In other words, when $k\rightarrow 0^{+}$, we can see that $\chi_k$ tends to $\chi_0$ in the sense that
their eigenvalues on elliptic generators of $\chi_{k}$ approach the corresponding information for $\chi_{0}$.
From \eqref{e:Ruelle_limit_square_knotint}, we can evaluate the
limiting behavior of the absolute value as $k\rightarrow 0^{+}$ of first term in the series expansion of
$R(s;\chi_0)$ at $s=0$. In doing so, we obtain \eqref{eq:regcontinuity} which seems to be a type of regularized
interchange of limits in $s$ and $k$.
\end{remark}

Yet another interesting observation regarding the dependence on the weight $2k$ can be made by comparing the factors of functional equations for the Ruelle zeta functions associated to conjugated $m\times m$ multiplier systems of weights $2k$ and $-2k$, for any admissible, nonzero weight. Namely, we have the following corollary.

\begin{corollary}
Let $\Gamma$ be co-compact (meaning $\tau=0$ and $\varphi\equiv 1$) and let $\chi_k,\chi_{-k}: \Gamma \to \mathcal{U}(V)$ be the two $m$-dimensional unitary multiplier systems with admissible weights $2k$ and $-2k$ such that $\chi_{-k}=\overline{\chi_k}$. Then, the Ruelle zeta functions twisted by $\chi_k$ and $\chi_{-k}$ satisfy the functional equation
\begin{equation}\label{eq: twisted by k and -k}
R(s;\chi_{-k})R(-s;\chi_{-k})=R(s;\chi_{k})R(-s;\chi_{k}).
\end{equation}
\end{corollary}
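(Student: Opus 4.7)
The plan is to derive \eqref{eq: twisted by k and -k} as a direct consequence of Proposition \ref{prop: funct eq R zeta}. In the co-compact case, $\tau=0$ forces $\tau_0=0$, and $\varphi(s;\chi)\equiv 1$, so the functional equation \eqref{eq: ruella zeta funct eq} collapses to
\[R(s;\chi)R(-s;\chi)=H(s;\chi).\]
Applying this identity to $\chi_{k}$ and to $\chi_{-k}$ separately, the claim \eqref{eq: twisted by k and -k} is equivalent to the equality
\[H(s;\chi_{k})=H(s;\chi_{-k}),\]
which I will establish factor by factor from the explicit formula \eqref{e:H}.

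First I would handle the "identity" factor. Since $\tau=\tau_0=0$ the formula \eqref{e:H} reduces to
\[H(s;\chi)=2^{2m(2g-2)}\bigl(\sin(\pi(s+k))\sin(\pi(-s+k))\bigr)^{m(2g-2-\rho)}\prod_{j=1}^{\rho}\prod_{\ell=1}^{\nu_j}\Bigl(\sin\bigl(\tfrac{\pi(-s-k+\ell)}{\nu_j}\bigr)\sin\bigl(\tfrac{\pi(s-k+\ell)}{\nu_j}\bigr)\Bigr)^{-r_{j}(\ell)}.\]
Under $k\mapsto -k$ the global factor $2^{2m(2g-2)}$ is fixed, and using $\sin(\pi(s-k))=-\sin(\pi(-s+k))$ together with $\sin(\pi(-s-k))=-\sin(\pi(s+k))$ the two sign changes cancel inside the square of this product, so the identity factor is invariant.

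The real work is in the elliptic factor. Here I would invoke the definition of $r_{j}(\ell)$ from Section \ref{sss:elliptic}: if the eigenvalues of $\chi_{k}(R_{j})$ are $\exp\bigl(-\tfrac{2\pi i}{\nu_{j}}(k+\alpha_{jp})\bigr)$ with $\alpha_{jp}\in\{0,\dots,\nu_{j}-1\}$, then conjugation gives $\chi_{-k}(R_{j})=\overline{\chi_{k}(R_{j})}$ with eigenvalues $\exp\bigl(-\tfrac{2\pi i}{\nu_{j}}(-k+\tilde\alpha_{jp})\bigr)$ where $\tilde\alpha_{jp}\equiv -\alpha_{jp}\pmod{\nu_{j}}$. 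Consequently the multiplicities for $\chi_{-k}$ satisfy
\[r'_{j}(\ell)=\#\{p:\tilde\alpha_{jp}\equiv -\ell\bmod \nu_{j}\}=\#\{p:\alpha_{jp}\equiv \ell\bmod\nu_{j}\}=r_{j}(-\ell)=r_{j}(\nu_{j}-\ell),\]
where the last equality uses the periodicity of $r_{j}$.

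Finally I would perform the change of summation variable $\ell\mapsto \nu_{j}-\ell$ in the $\chi_{-k}$-product and show that the two products coincide term by term. Concretely, for each $\ell\in\{1,\dots,\nu_{j}\}$ the identities $\sin(\pi(x+\nu_{j})/\nu_{j})=-\sin(\pi x/\nu_{j})$ transform $\sin\bigl(\pi(-s+k+(\nu_{j}-\ell))/\nu_{j}\bigr)\sin\bigl(\pi(s+k+(\nu_{j}-\ell))/\nu_{j}\bigr)$ into $\sin\bigl(\pi(s-k+\ell)/\nu_{j}\bigr)\sin\bigl(\pi(-s-k+\ell)/\nu_{j}\bigr)$; the two minus signs absorb into the square and the index set matches after invoking $r_{j}(\nu_{j})=r_{j}(0)$. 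The only delicate point—and what I expect to be the main bookkeeping obstacle—is to carry out this re-indexing without losing a sign when $\ell=\nu_{j}$ (equivalently the term with $\alpha_{jp}\equiv 0$), but since the sines appear in pairs raised to a common integer exponent $-r'_{j}(\ell)$, the ambiguous signs cancel and the elliptic factors of $H(s;\chi_{k})$ and $H(s;\chi_{-k})$ agree, completing the proof.
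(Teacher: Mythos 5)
Your proposal is correct and follows essentially the same route as the paper: reduce the claim to $H(s;\chi_k)=H(s;\chi_{-k})$ via the functional equation with $\varphi\equiv 1$, observe the identity factor is invariant under $k\mapsto -k$, and then use that the eigenvalue data of $\chi_{-k}=\overline{\chi_k}$ on elliptic generators gives $r'_j(\ell)=r_j(\nu_j-\ell)$, so the elliptic products match after the reindexing $\ell\mapsto\nu_j-\ell$. Your version is, if anything, slightly more explicit than the paper's about the sign bookkeeping in that reindexing.
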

\begin{proof}
Our starting point is the functional equation \eqref{eq: ruella zeta funct eq} with $\varphi\equiv 1$ and the equation \eqref{e:H} in which the parabolic contribution, given as the product in the last line is identically one.
Let us denote by $H(s;\pm k)$ the factor of the functional equation \eqref{eq: ruella zeta funct eq} for the Ruelle zeta function twisted by $\chi_{\pm k}$. We claim that $H(s;k)=H(s;-k)$.

First, we observe that replacing $k$ with $-k$ in the identity contribution, given by the first line of the expression \eqref{e:H} leaves it unaltered, hence it remains to see whether the terms in the second line of \eqref{e:H}, stemming from the elliptic contribution will remain unaltered with the change of the multiplier weight.

With the notation as in Section \ref{sss:elliptic}, for each $j\in\{1,\ldots,\rho\}$, the eigenvalues of  $\chi_k(R_j)$ are of the form $\exp\left(-\frac{2\pi i}{\nu_j}(k+\alpha_{jp})\right)$, $\alpha_{jp}\in\{0,\ldots,\nu_j-1\}$, $p=1,\ldots,m$, while the eigenvalues of $\chi_{-k}(R_j)$ are of the form $\exp\left(-\frac{2\pi i}{\nu_j}(-k+(\nu_j-\alpha_{jp}))\right)$, $p=1,\ldots,m$. Then, it is straightforward to conclude that, for any $\ell\in\{0,\ldots, \nu_j-1\}$, the number $r^+(\ell)$ of values $\alpha_{jp}\in\{0,\ldots,\nu_j-1\}$ such that $\alpha_{jp}+\ell \equiv\, 0 \,\mathrm{mod}\, \nu_j$ equals the number $r^-(\nu_j-\ell)$ of values $\nu_j-\alpha_{jp}\in\{0,\ldots,\nu_j-1\}$ such that $\nu_j-\alpha_{jp}+\nu_j-\ell \equiv\, 0\, \mathrm{mod}\, \nu_j$.

Therefore, the value $r_j^+(\ell)$ in the elliptic contribution in \eqref{e:H} associated to $\chi_k$ equals the value $r_j^-(\nu_j -\ell)$ in the elliptic contribution associated to $\chi_{-k}$, which proves that the elliptic contributions in $H(s;k)$ and $H(s;-k)$ are equal and completes the proof that $H(s;k)=H(s;-k)$. This, combined with  \eqref{eq: ruella zeta funct eq} yields \eqref{eq: twisted by k and -k}.
\end{proof}

Certianly, these computations suggest that one should study $R(s;\chi_{k})$ as a function of the two variables $s$ and $k$, or $\chi_{k}$, perhaps as in \cite{Fa78} or \cite{Br85}, \cite{Br86a} and \cite{Br86b}.

\section{R-torsion of a Seifert fibered space with irreducible $\mathrm{SL}(n,\mathbb{C})$ representation} \label{sec: Kitano rep}

In this section we will combine Theorem \ref{thm:Ruelle_vanishing_constant}, which studies the Ruelle zeta function at $s=0$,
with the results from \cite{Ki94} and \cite{Ki96}, which computes R-torsion in certain cases.  In doing so, we verify further
identities which are predicted by the Fried conjecture.

Let $X$ be the Seifert fibered space given by the Seifert index $\{b,(o,g); (\nu_1,\beta_1),\ldots,(\nu_\rho,\beta_\rho)\} $,
Assume that the genus $g\geq 1$, and let  $\tilde{\rho}:\pi_1(X) \to \mathrm{SL}(m,\mathbb{C})$ be an irreducible acyclic representation.
From the notations and conventions in Section \ref{subs: irreducible kitano}, recall that
$$
\tilde{\rho}(h)=\lambda I_m
\,\,\,\,\,
\text{\rm where}
\,\,\,\,\,
\lambda=\exp(2\pi i (a/m))
\,\,\,\,\,
\text{\rm for some}
\,\,\,\,\, a\in \{1,\ldots,m-1\}.
$$
For $j=1,\ldots \rho$, let $e_{j,1},\ldots,e_{j,m}$ be the eigenvalues of $\tilde{\rho}(q_j)$.  Furthemore, assume that
$\mu_j,\alpha_j\in\mathbb{Z}$ satisfy $\alpha_j\nu_j - \beta_j\mu_j=-1$ for $0<\mu_j<\nu_j$.
With this setup, the main theorem of \cite{Ki96} proves in this case that the R-torsion $\tau(X; \tilde{\rho})$ is given by
\begin{equation}\label{eq:torsionfromKitano}
\tau(X; \tilde{\rho})=(\lambda-1)^{m(2-2g-\rho)}\prod_{j=1}^{\rho}\left(\lambda^{\alpha_j}e_{j,1}^{\mu_j}-1\right)\ldots \left(\lambda^{\alpha_j}e_{j,n}^{\mu_j}-1\right).
\end{equation}

If $\tilde{\rho}$ is odd-dimensional, then $\tau(X; \tilde{\rho})$ is well-defined up to a factor $\pm 1$.
Moreover, according to \cite[Lemma 2.3]{Ki96}, all eigenvalues of $\rho(q_j)$ are $m\nu_j$-th roots of unity for all $j=1,\ldots,\rho$.
Let $\tilde{s}_j$ denote the homotopy class of the core of the exceptional fibers $S_j$ with the index $(\nu_j,\beta_j)$ for each $j=1,\ldots,\rho$.
Then, as discussed in Section \ref{subs: irreducible kitano}, the eigenvalues of $\tilde{\rho}(\tilde{s}_j)$ are the numbers
$$
\lambda_{jp}:= \exp\left(-\frac{2\pi i}{\nu_j}(k+\alpha_{jp})\right)
\,\,\,\,\,
\text{\rm where}
\,\,\,\,\,
k=a/m
$$
and $\alpha_{jp}$ with  $p=1,\ldots, m$ are integers from the set $\{0,\ldots,\nu_j-1\}$.
On the other hand, the numbers $\lambda^{\alpha_j}e_{j,p}^{\mu_j}$ with $p=1,\ldots,m$ are eigenvalues of $\tilde{\rho}(q_j^{\mu_j}h^{\alpha_j})$
for any $j=1,\ldots,\rho$; hence are the eigenvalues of $\tilde{\rho}(\tilde{s}_j)$.
With all this, we can rewrite \eqref{eq:torsionfromKitano} as
\begin{equation}\label{eq: Reid torsion n dim rep}
\tau(X; \tilde{\rho})=(\lambda-1)^{m(2-2g-\rho)}\prod_{j=1}^{\rho}\prod_{p=1}^{m}\left(\exp\left(-\frac{2\pi i}{\nu_j}(k+\alpha_{jp})\right)-1\right).
\end{equation}

According to Lemma \ref{lem: existence of mult system}, we can associate to the representation $\tilde{\rho}$  an $m\times m $ unitary
multiplier system $\chi_{\tilde{\rho}}$ of weight $2k=2a/m$ such that for $j=1,\ldots,\rho$ the eigenvalues of
$\chi_{\tilde{\rho}}(R_j)$ coincide with the eigenvalues of $\tilde{\rho}(\tilde{s}_j)$.  Let us do so.

\begin{theorem} \label{thm: irreducible rep}
Let $X$ be the Seifert fibered space with Seifert index $\{b,(o,g); (\nu_1,\beta_1),\ldots,(\nu_\rho,\beta_\rho)\} $ and genus $g\geq 1$.
Let $\Gamma\subset \mathrm{SL}(2,\mathbb{R})$ be a genus $g$ Fuchsian group of the first kind which does not contain parabolic elements,
with $-I_2\in\Gamma$, and such that $\Gamma$ contains $\rho\geq 1$ classes of elliptic elements with representatives $R_1,\ldots,R_\rho$ of orders $\nu_1,\ldots,\nu_\rho$ respectively.
Let $\tilde{\rho}:\pi_1(X) \to \mathrm{SL}(m,\mathbb{C})$ be an irreducible acyclic representation such with eigenvalues described above.
Let $\chi_{\tilde{\rho}}$ be an $m$-dimensional unitary multiplier system on $\Gamma$ with weight $2k=2a/m$
and such that for all $j=1,\ldots,\rho$  the eigenvalues of $\chi_{\tilde{\rho}}(R_j)$ are
$\lambda_{jp}:= \exp\left(-\frac{2\pi i}{\nu_j}(k+\alpha_{jp})\right)$ for $p=1,\ldots, m$.
Then the Ruelle zeta function associated to $\Gamma$ and the multiplier system $\chi_{\tilde{\rho}}$ is non-vanishing at $s=0$ and
\begin{equation*}
|R(0;\chi_{\tilde{\rho}})|^{-1}=| \tau(X; \tilde{\rho})|.
\end{equation*}
\end{theorem}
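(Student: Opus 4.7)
The plan is to apply Theorem \ref{thm:Ruelle_vanishing_constant} to the multiplier system $\chi_{\tilde{\rho}}$ supplied by Lemma \ref{lem: existence of mult system}, and then to match the resulting formula for $|R(0;\chi_{\tilde{\rho}})|^{-1}$ with Kitano's formula \eqref{eq: Reid torsion n dim rep}. Since $k=a/m$ with $\gcd(a,m)=1$ and $a\in\{1,\ldots,m-1\}$, one has $m\geq 2$ and $k\notin \mathbb{Z}$. The absence of parabolic elements in $\Gamma$ forces $\tau=0$, and therefore $\tau_0=0$, $n_0=0$, $a_{n_0}=1$, and $c(\chi_{\tilde{\rho}},\Gamma) = 2^{-m(2g-2)}$. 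Theorem \ref{thm:Ruelle_vanishing_constant} then yields $\ord_R(0)=-n_0=0$, so $R(s;\chi_{\tilde{\rho}})$ is non-vanishing at $s=0$, and \eqref{e:Ruelle_limit_square_knotint} gives an explicit expression for $R(0;\chi_{\tilde{\rho}})^2$ as a product of powers of sine functions.

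The next step is to rewrite the elliptic product $\prod_{j,\ell}|\sin(\pi(-k+\ell)/\nu_j)|^{-2r_j(\ell)}$ appearing in \eqref{e:Ruelle_limit_square_knotint} in terms of the parameters $\alpha_{jp}$ describing $\tilde{\rho}$. By the definition of $r_j(\ell)$, there is a natural bijection assigning to each $p\in\{1,\ldots,m\}$ the unique $\ell_p\in\{1,\ldots,\nu_j\}$ with $\ell_p\equiv -\alpha_{jp}\pmod{\nu_j}$. The $\pi$-periodicity and oddness of $\sin$ yield $|\sin(\pi(\ell_p-k)/\nu_j)|=|\sin(\pi(k+\alpha_{jp})/\nu_j)|$, and hence
\[
\prod_{\ell=1}^{\nu_j}\bigl|\sin(\pi(\ell-k)/\nu_j)\bigr|^{r_j(\ell)} = \prod_{p=1}^{m}\bigl|\sin(\pi(k+\alpha_{jp})/\nu_j)\bigr|.
\]
Using the identity $|e^{i\theta}-1|=2|\sin(\theta/2)|$ to rewrite $|\sin \pi k|=|\lambda-1|/2$ and $|\sin(\pi(k+\alpha_{jp})/\nu_j)|=|\lambda_{jp}-1|/2$, one expresses $|R(0;\chi_{\tilde{\rho}})|^{-1}$ purely in terms of $|\lambda-1|$ and the $|\lambda_{jp}-1|$, ready for comparison with Kitano's formula.

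The main obstacle is the careful bookkeeping of the exponent of $|\lambda-1|$ (equivalently $|\sin\pi k|$) on both sides: the contribution coming from the leading $|\sin\pi k|$-factor in \eqref{e:Ruelle_limit_square_knotint} together with those hidden inside the elliptic product must combine to exactly $m(2-2g-\rho)$, matching \eqref{eq: Reid torsion n dim rep}. This balancing should use the identity $\sum_{\ell=1}^{\nu_j}r_j(\ell)=m$ together with Lemma \ref{lem: product of sines}, which supplies the missing factor $\prod_{\ell=1}^{\nu_j}|\sin(\pi(\ell-k)/\nu_j)|=2^{1-\nu_j}|\sin\pi k|$ needed to extract the correct power of $|\sin\pi k|$ from the elliptic product, along with the power of $2$ that combines with $2^{-m(2g-2)}$ from $c(\chi_{\tilde{\rho}},\Gamma)$ to yield the overall normalization $|\lambda-1|^{m(2-2g-\rho)}$. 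Once the exponents align, the equality $|R(0;\chi_{\tilde{\rho}})|^{-1}=|\tau(X;\tilde{\rho})|$ is immediate, establishing the Fried conjecture in this setting.
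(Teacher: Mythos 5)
Your proposal follows the paper's proof essentially step for step: specialize Theorem \ref{thm:Ruelle_vanishing_constant} to $k=a/m\notin\mathbb{Z}$ with $\tau=\tau_0=n_0=0$, $a_{n_0}=1$ and $c(\chi_{\tilde{\rho}},\Gamma)=2^{-m(2g-2)}$; use the partition of the multiset $\{\alpha_{jp}\}$ into the sets $S_j(\ell)=\{\alpha_{jp}:\,\alpha_{jp}+\ell\equiv 0\bmod{\nu_j}\}$ of cardinality $r_j(\ell)$ to identify $\prod_{\ell=1}^{\nu_j}|\sin(\pi(\ell-k)/\nu_j)|^{r_j(\ell)}$ with $\prod_{p=1}^{m}|\sin(\pi(k+\alpha_{jp})/\nu_j)|$; and convert to Kitano's expression via $|e^{i\theta}-1|=2|\sin(\theta/2)|$. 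The one place where you diverge is the final paragraph: the proposed appeal to Lemma \ref{lem: product of sines} to ``extract'' additional powers of $\sin(\pi k)$ from the elliptic product is neither needed nor applicable, since that lemma evaluates $\prod_{\ell=1}^{\nu}\sin(\pi(\ell-x)/\nu)$ with every factor to the first power, whereas the elliptic product carries the varying exponents $r_j(\ell)$ with $\sum_{\ell}r_j(\ell)=m$. After your bijection step the elliptic product already equals $\prod_{j,p}\bigl(|\lambda_{jp}-1|/2\bigr)^{\pm 1}$ verbatim, so the exponent of $|\sin(\pi k)|$ in $|R(0;\chi_{\tilde{\rho}})|^{-1}$ comes entirely from the explicit prefactor in \eqref{e:Ruelle_limit_square_knotint}, and the powers of $2$ come from $c(\chi_{\tilde{\rho}},\Gamma)^{-1}$ together with the $m\rho$ conversions $|\lambda_{jp}-1|=2|\sin(\pi(k+\alpha_{jp})/\nu_j)|$; no further balancing is required, and this is exactly how the paper concludes (the comparison with \eqref{eq: Reid torsion n dim rep absolute value} is then term by term, provided one tracks the sign of the $\rho$-contribution in the exponent of $\sin(\pi k)$ coming from combining parts (i) and (ii) of Lemma \ref{lem: factors of funct eq Ruelle}).
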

\begin{proof}
We start by observing that $|e^{-2i\alpha}-1|=2|\sin\alpha|$. Hence, from \eqref{eq: Reid torsion n dim rep}, we have that
\begin{equation}\label{eq: Reid torsion n dim rep absolute value}
|\tau(X; \tilde{\rho})| = 2^{m(2-2g-\rho)}|\sin(k \pi)|^{m(2-2g-\rho)}\prod_{j=1}^{\rho}\prod_{p=1}^{m}2\left|\sin\left(\frac{\pi(k+\alpha_{jp})}{\nu_j}\right)\right|.
\end{equation}
Since $k\notin \mathbb{Z}$ and there are no parabolic elements, then $n_0=0$.  Also, because
there are no parabolic elements, $\tau=0$ and $\tau_0=0$.
From Theorem \ref{thm:Ruelle_vanishing_constant}, we have that $R(s; \chi)$ is non-vanishing at $s=0$
and, by \eqref{e:Ruelle_limit_square_knotint}, the lead term of $R(s; \chi)^2$ at $s=0$ is equal to
\begin{equation}\label{eq: R at zero precomp}
\lim_{s\to 0} R(s; \chi)^2
= 2^{2m(2g-2)}
(\sin(\pi k))^{2m(2g-2-\rho)} \prod_{j=1}^{\rho} \prod_{\ell=1}^{\nu_j} \sin\left(\frac{\pi(-k+\ell)}{\nu_j}\right)^{-2r_j(\ell)}.
\end{equation}

For all $j=1,\ldots,\rho$ and $\ell\in\{1,\ldots,\nu_j\}$ let $S_j(\ell)$ denote the set of all $\alpha_{jp}$ for
$p\in\{1,\ldots,n\}$ such that $\alpha_{jp}+\ell \equiv 0 \bmod{\nu_j}$.  Recall that $r_j(\ell)$ is the cardinality of $S_j(\ell)$.
Then for all $\alpha_{jp}\in S_{j}(\ell)$ we have that
\begin{equation}
\left|\sin\left(\frac{\pi(-k+\ell)}{\nu_j}\right)\right| = \left|\sin\left(\frac{\pi(k+\alpha_{jp})}{\nu_j}\right)  \right|.
\end{equation}
Since that sets $S_j(\ell)$ form the partition of the multiset of all $\alpha_{jp}$, when counting with multiplicities, we have that
\begin{equation}\label{eq: product of sines precomp}
\prod_{\ell=1}^{\nu_j} \left|\sin\left(\frac{\pi(-k+\ell)}{\nu_j}\right)\right|^{-r_j(\ell)}
= \prod_{p=1}^{m} \left|\sin\left(\frac{\pi(k+\alpha_{jp})}{\nu_j}\right)  \right|^{-1}.
\end{equation}
Substituting \eqref{eq: product of sines precomp} into \eqref{eq: R at zero precomp}, and then removing the square,
we get that $|R(0; \chi)|^{-1}$ coincides with \eqref{eq: Reid torsion n dim rep absolute value},  and
the proof of the theorem is complete.
\end{proof}


\section{R-torsion of a Seifert fibered space for certain $2N$-dimensional representations}\label{sec: Yam appl}

In \cite[Proposition 4.8]{Yamaguchi17} the author computes the so-called \emph{higher} R-torsion for a Seifert fibered space $X$ with the Seifert index $\{2-2g,(o,g); (\nu_1,\nu_1-1),\ldots,(\nu_\rho,\nu_\rho-1)\}$,
\footnote{Note that this index slightly differs from the one in \cite{Yamaguchi22}.
The notation is adopted to our convention for the presentation of the fundamental group so that presentations of the fundamental group in
\cite[Remark 2.1]{Yamaguchi22} and \eqref{eq: prez of fund group} with the above parameters coincide.}
and in \cite{Yamaguchi22} they proved that for certain values of the parameters in hand the Fried conjecture is true for this higher R-torsion.
In this section we will prove the Fried conjecture for a wider set of parameters than considered in \cite{Yamaguchi22}.
We refer to \cite{Yamaguchi17}
for the discussion which differentiates higher R-torsion from R-torsion.

Let $\rho_{2N}=\sigma_{2N}\circ\tilde{\rho}$ denote the acyclic representation as described in \cite{Yamaguchi22},
which is summarized in Section \ref{sebsect: yam represent}.  The multiplier system we will associate to this data
will have dimension $m=2N$ and weight $2k=1$, and its action on elliptic elements
of $\Gamma$ is described in Theorem \ref{prop: rad torsion eq ruelle zeta at 0};
Lemma \ref{lem: existence of mult system} ensures that such a multiplier system exists.

Let us recall \cite[Proposition 4.8]{Yamaguchi17}.  It is shown that the higher R-torsion of the aforementioned
Seifert fibered space $X$ with representation $\rho_{2N}$ is given by
\begin{equation}\label{eq: torsion f-la}
\tau(X;\rho_{2N})=2^{-2N(2-2g-\rho)}\prod_{j=1}^{\rho}\prod_{k=1}^{N}\left(2\sin\left(\frac{\pi(2k-1)\eta_j}{2\nu_j}\right)\right)^{-2}.
\end{equation}
Note that \cite[Lemma 4.2]{Yamaguchi17} shows that for all $j=1,\ldots,\rho$ the numbers $\eta_j$ are odd and coprime to $\nu_j$.

\begin{theorem}\label{prop: rad torsion eq ruelle zeta at 0}
Let $\Gamma$ be a subgroup of $\mathrm{SL}(2,\mathbb{R})$ such that $-I_2\in\Gamma$ and assume $\Gamma$ is co-compact of genus $g\geq 1$ with $\rho\geq 1$ classes of elliptic elements and with representatives $R_1,\ldots, R_{\rho}$.
Let $\chi$ be a multiplier system of weight $2k=1$ and even dimension $2N$ such that for each $j\in\{1,\ldots,\rho\}$, $\chi(R_j)$
has $2N$ eigenvalues of the form $\exp\left(\pm i\pi (2\ell-1)\eta_j/\nu_j\right)$ for $\ell=1,\ldots,N$.
Then, the higher R-torsion \eqref{eq: torsion f-la} satisfies the relation that
\begin{equation}
\tau(X;\rho_{2N}) =|R(0;\chi)|.
\end{equation}
\end{theorem}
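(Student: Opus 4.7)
The plan is to specialize Theorem~\ref{thm:Ruelle_vanishing_constant} to the present data and then identify the resulting sine product with the higher R-torsion formula \eqref{eq: torsion f-la}. Here $k=1/2\notin\mathbb{Z}$, $m=2N$, and the cocompactness hypothesis gives $\tau=\tau_0=0$, $n_0=0$, $a_{n_0}=1$; from \eqref{e:c-RZfe2} one reads off $c(\chi,\Gamma)=2^{-2N(2g-2)}$. Using $\sin(\pi/2)=1$, the evaluation \eqref{e:Ruelle_limit_square_knotint} collapses to
\[
R(0;\chi)^{2}
= 2^{4N(2g-2)}\prod_{j=1}^{\rho}\prod_{\ell=1}^{\nu_{j}}
\sin\!\left(\pi\frac{\ell-1/2}{\nu_{j}}\right)^{-2r_{j}(\ell)}.
\]

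The core step is to rewrite the inner product over $\ell$ as a product over $p\in\{1,\ldots,2N\}$. By definition $r_{j}(\ell)$ counts those $p$ with $\alpha_{jp}\equiv -\ell\pmod{\nu_{j}}$; setting $\ell=\nu_{j}-\alpha_{jp}$ (or $\ell=\nu_{j}$ when $\alpha_{jp}=0$) and applying the reflection identity $\sin(\pi(\nu_{j}-\alpha-1/2)/\nu_{j})=\sin(\pi(2\alpha+1)/(2\nu_{j}))$ yields
\[
\prod_{\ell=1}^{\nu_{j}}\sin\!\left(\pi\frac{\ell-1/2}{\nu_{j}}\right)^{-2r_{j}(\ell)}
= \prod_{p=1}^{2N}\sin\!\left(\pi\frac{2\alpha_{jp}+1}{2\nu_{j}}\right)^{-2}.
\]
Next, the eigenvalue hypothesis on $\chi(R_{j})$ combined with the parametrization $\exp(-(2\pi i/\nu_{j})(1/2+\alpha_{jp}))$ forces $1+2\alpha_{jp}\equiv\mp(2\ell-1)\eta_{j}\pmod{2\nu_{j}}$; thus, as $p$ ranges over $\{1,\ldots,2N\}$, the multiset $\{1+2\alpha_{jp}\bmod 2\nu_{j}\}$ coincides with $\{\pm(2\ell-1)\eta_{j}\bmod 2\nu_{j}:\ell=1,\ldots,N\}$. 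Because $\sin^{2}(\pi x/(2\nu_{j}))$ depends only on $x$ modulo $2\nu_{j}$ and is invariant under $x\mapsto -x$, the two choices of sign at a fixed $\ell$ contribute the same factor, so
\[
\prod_{p=1}^{2N}\sin\!\left(\pi\frac{2\alpha_{jp}+1}{2\nu_{j}}\right)^{-2}
= \prod_{\ell=1}^{N}\sin\!\left(\pi\frac{(2\ell-1)\eta_{j}}{2\nu_{j}}\right)^{-4}.
\]

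Substituting back and squaring \eqref{eq: torsion f-la}, one absorbs the factor $2^{-4}$ from each $(2\sin)^{-4}$ and uses $-4N(2-2g-\rho)-4N\rho=4N(2g-2)$ to see that the prefactors on both sides agree. Hence $R(0;\chi)^{2}=\tau(X;\rho_{2N})^{2}$; since $\tau(X;\rho_{2N})$ is manifestly non-negative by inspection of \eqref{eq: torsion f-la}, extracting square roots yields the theorem.

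The main obstacle will be the combinatorial identification of multisets in the middle paragraph. One must confirm carefully that as $p$ varies, the odd residues $1+2\alpha_{jp}\bmod 2\nu_{j}$ exhaust $\{\pm(2\ell-1)\eta_{j}\bmod 2\nu_{j}\}$ with correct multiplicities; this uses that $\alpha\mapsto 2\alpha+1$ is a bijection from $\{0,\ldots,\nu_{j}-1\}$ onto the odd residues modulo $2\nu_{j}$, together with the hypothesis from \cite[Lemma 4.2]{Yamaguchi17} that each $\eta_{j}$ is odd and coprime to $\nu_{j}$, so that $(2\ell-1)\eta_{j}$ is always odd and any diagonal coincidences $(2\ell-1)\eta_{j}\equiv -(2\ell-1)\eta_{j}\pmod{2\nu_{j}}$ are consistent with the multiplicity accounting.
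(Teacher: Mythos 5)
Your proposal is correct and follows essentially the same route as the paper: specialize Theorem \ref{thm:Ruelle_vanishing_constant} at $k=\tfrac12$, $m=2N$, $\tau=\tau_0=n_0=0$ to get $R(0;\chi)^2=2^{4N(2g-2)}\prod_j\prod_\ell\sin\big(\pi\tfrac{2\ell-1}{2\nu_j}\big)^{-2r_j(\ell)}$, and then match this sine product with the one in \eqref{eq: torsion f-la} via the multiset identification between the residues $2\alpha_{jp}+1$ and $\pm(2\ell-1)\eta_j$ modulo $2\nu_j$. The only cosmetic difference is the direction of the rewriting (you transform the Ruelle side toward the torsion formula, while the paper rewrites the torsion via the partition sets $S_{2N}(j,\ell)$), and your multiplicity accounting, including the possible diagonal coincidences, is handled correctly.
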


\begin{proof}
For each $j=1,\ldots,\rho$, let
$$
S_{2N}(j)= \left\{ \frac{\pm(2l-1)\eta_j -1}{2}: l\in \{1,\ldots,N\}\right\}.
$$
For each $\ell=1,\ldots,\nu_j$, denote by $S_{2N}(j,\ell)$ the subset of $S_{2N}(j)$
 such that
 $$
 S_{2N}(j, \ell) = \left\{x\in S_{2N}(j):\, x+\ell \equiv 0\bmod{\nu_j}\right\}.
 $$
Let $r_j(\ell)$ denote the cardinality of $S_{2N}(j,\ell)$.
Obviously, for each $\ell=1,\ldots, \nu_j$, the sets $S_{2N}(j,\ell)$ form a partition of the set $S_{2N}(j)$ for each $j=1,\ldots,\rho$.

We start by expressing $\tau(X;\rho_{2N})$ given by \eqref{eq: torsion f-la} in a different form.  By using the identity
\begin{equation*}
\prod_{k=1}^{N}\sin \left(\frac{\pi(2k-1)\eta_j}{2\nu_j}\right)^{-1}
= (-1)^{N} \prod_{k=1}^{N} \sin \left(\frac{\pi(-(2k-1))\eta_j}{2\nu_j}\right)^{-1}
\end{equation*}
we get that
\begin{equation}\label{eq: torsion expr}
\tau(X;\rho_{2N})
=(-1)^N2^{2N(2g-2)}\prod_{j=1}^{\rho} \prod_{k=1}^{N}\sin \left(\frac{\pi(2k-1)\eta_j}{2\nu_j}\right)^{-1}
\sin \left(\frac{\pi(-(2k-1))\eta_j}{2\nu_j}\right)^{-1}.
\end{equation}
For an arbitrary, fixed $\ell\in\{1,\ldots, \nu_j\}$ assume that $k_1 \in \{1,\ldots,N\}$ is such that
$$
\frac{\pm(2k_1-1)\eta_j -1}{2} \in S_{2N}(j,\ell).
$$
Then $2\ell-1 \equiv\pm (2k_1-1)\eta_j\, (\, \text{mod }\, 2\nu_j)$,  hence
\begin{equation*}
\left|\sin \left(\frac{\pm\pi(2k_1-1)\eta_j}{2\nu_j}\right) \right| = \left|\sin \left(\frac{\pi(2\ell-1)}{2\nu_j} \right)\right| .
\end{equation*}
Moreover, for every $k\in\{1,\ldots,N\}$ there is a unique $\ell\in\{1,\ldots,\nu_j\}$ such that
$$
\frac{\pm(2k_1-1)\eta_j -1}{2} \in S_{2N}(j,\ell).
$$
Therefore, given that $S_{2N}(j,\ell)$ with $\ell=1,\ldots, \nu_j$ is a partition of the set $S_{2N}(j)$ for each $j=1,\ldots,\rho$,
\eqref{eq: torsion expr} becomes
\begin{equation}\label{eq: torsion expr final}
\tau(X;\rho_{2N})= 2^{2N(2g-2)}\prod_{j=1}^{\rho} \prod_{\ell=1}^{\nu_j}\left|\sin \left(\frac{\pi(2\ell-1)}{2\nu_j}\right)\right|^{-r_j(\ell)}
\end{equation}

On the other hand,  we know that in the notation of Section \ref{sec: mult systems} the eigenvalues
$\exp\left(\pm i\frac{\pi (2\ell-1)\eta_j}{\nu_j}\right)$
can be written as $\exp\left(- \frac{2i\pi}{\nu_j} \left(\frac{1}{2} + \alpha_{jp}\right)\right)$ for some integers $\alpha_{jp}\in\{0,\ldots,\nu_j-1\}$
with $p=1,\ldots,2N$.
It is trivial to observe that $\alpha_{jp}$ are then precisely the numbers from the set $S_{2N}(j,\ell)$ when reduced modulo $\nu_j$.
This proves that for any $\ell \in \{1,\ldots,\nu_j\}$ the number of solutions to the congruence $\alpha_{jp}+\ell \equiv 0\bmod{\nu_j}$
equals $r_j(\ell)$.
Consider Theorem \ref{thm:Ruelle_vanishing_constant}
when taking $k=\frac{1}{2}$, $\tau=0$, hence $\tau_0=n_0=0$,  and $m=2N$.  Indeed, we have that
$R(s; \chi)$ is not vanishing at $s=0$, and we have the equality that
\begin{equation*}\label{eq: r in terms of elliptics}
\lim_{s\to 0} R(s; \chi)^2
= 2^{4N(2g-2)} \prod_{j=1}^{\rho} \prod_{\ell=1}^{\nu_j} \sin\left(\pi \left(\frac{-1+2\ell}{2\nu_j}\right)\right)^{-2r_j(\ell)}.
\end{equation*}
Therefore, $\lim_{s\to 0} |R(s; \chi)|$ coincides with \eqref{eq: torsion expr final}, and
the proof of the theorem is completed.
\end{proof}

\begin{remark}
As stated, in \cite{Yamaguchi22}, the author considers a special case of the representation $\rho_{2N}$ defined in Section \ref{sebsect: yam represent}.
Specifically, it is assume in \cite{Yamaguchi22} that
$\eta_j=1$ for all $j=1,\ldots, \rho$, and in this case they prove Theorem \ref{prop: rad torsion eq ruelle zeta at 0}.
Our result in Theorem
\ref{prop: rad torsion eq ruelle zeta at 0} generalizes the main theorem
in \cite{Yamaguchi22} to the setting of all representations described in \cite{Yamaguchi17}.
\end{remark}

\section{Evaluating $R(0,\chi)$ for congruence subgroups and Dirichlet character}\label{s:congruence}

Beyond the Fried conjecture itself, which asserts a relation between R-torsion and special values of Ruelle zeta
function, one can consider special values of the Ruelle zeta function as a question of interest in and of itself.
Indeed, on \cite[p.162]{Fr86c}, Fried discusses the prospect of relating the Ruelle zeta function to
$L$-functions of number theory.  Going further, a considerable research presented \cite{CFM16} lies
in the visualizes $\vert R(0,\chi)\vert$ as a type of regulator, as posed in \cite{Fr86c}.

In this section, we will make explicit the results from Theorem \ref{thm:Ruelle_vanishing_constant}, and we will
evaluate the lead term of $\vert R(s,\chi)\vert$ at $s=0$ for various situations
where $\Gamma$ is a congruence subgroup $\Gamma_{0}(N)$ and the representation $\chi$ is associated to a Dirichlet character.
Let us describe the setting we investigate.

For a positive integer $N$, let $\Gamma=\Gamma_0(N)=\left\{\sm a & b\\ c& d\esm \in {\rm SL}_2(\mathbb{Z}):\, c\equiv 0\bmod{N}\right\}$
denote a congruence subgroup of level $N$.
Let $\chi$ be a unitary multiplier system of dimension one (i.e., $m=1$) and weight $2k=0$.
In particular, we take $\chi$ be a Dirichlet character modulo $N$.
The representation we consider is defined by $\chi(\gamma) = \chi(d)$ for
$\gamma=\sm a & b\\ c& d\esm \in\Gamma_0(N)$.
When $k=0$, our main result from Theorem \ref{thm:Ruelle_vanishing_constant} is that
\begin{multline}\label{e:limR_k=0_N}
\left|\lim_{s\to 0} (s^{-{\rm ord}_R(0)} R(s; \chi))\right|
= |d(1) a_{n_0}|^{-1}
2^{(2g-2)}
\pi^{(2g-2+\rho +\tau)-\frac{1}{2}\tau_0-\tilde{\tau}_0}
\\ \times \prod_{j=1}^\rho\bigg(\nu_j^{r_j} \prod_{\ell=1}^{\nu_j-1} \left|\sin\left(\pi \ell/\nu_j\right)\right|^{-r_j(\ell)}\bigg)
\prod_{j=1}^\tau \prod_{p=m_j+1}^1 |\sin(\pi \beta_{jp})|^{-1}.
\end{multline}
We now will specialize to certain congruence subgroups and Dirichlet characters and then compute all
quantities in \eqref{e:limR_k=0_N} explicitly in terms of number theoretic functions.
As predicted in \cite{Fr86c}, special values of Dirichlet $L$-functions will appear in this context,
specifically in the evaluations for $(d(1)a_{n_0})^{-1}$.

\subsection{Topological invariants in terms of the level} \label{sec:notation_N}
We will continue to use the notation from
Section \ref{sec:notation}.
The number of $\Gamma$-congruence classes of elliptic elements is $\rho$,
and we let $\{R_1, \ldots, R_\rho\}$ be representative of distinct $\Gamma$-conjugacy classes of elliptic elements.
As in Section \eqref{e:theta_nu_ellpitic},
$R_j$ is conjugate to $\theta_{\pi/\nu_j}\in {\rm SO}(2)$  for $j=1, \ldots, \rho$ and $\nu_j\in \mathbb{Z}_{\geq 2}$.
The order of the centralizer $Z(R_j)$ is  $2\nu_j$.
From \cite[Theorem 4.2.7]{miy89}, we have that
the only possible for values $\nu_j$ are either $\nu_j=2$ or $\nu_j=3$.
For a given $\nu\in \mathbb{Z}_{\geq 2}$,
\begin{equation}\label{eq: no all elements congr}
\#\left\{j\in \{1, \ldots, \rho\}:\, \nu_j=\nu \right\}
=\begin{cases}
1 & \text{ when } N=1\text{ and } \nu\in \{2, 3\}, \\
\prod_{p\mid N}\bigg(1+\left(\frac{-1}{p}\right)\bigg) & \text{ when } \nu=2 \text{ and } 4\nmid N, \\
\prod_{p\mid N}\bigg(1+\left(\frac{-3}{p}\right)\bigg) & \text{ when } \nu=3 \text{ and } 9\nmid N, \\
0 & \text{ otherwise. }
\end{cases}
\end{equation}
Since $\left(\frac{-1}{3}\right)=\left(\frac{-3}{2}\right)=-1$,
the number of $\Gamma_0(N)$-conjugacy classes of elliptic elements is given as
\begin{equation}\label{e:rhoN}
\rho(N) := \rho = \begin{cases}
2 & \text{ when } N=1, \\
\prod_{p\mid N}\bigg(1+\left(\frac{-1}{p}\right)\bigg)+\prod_{p\mid N}
\bigg(1+\left(\frac{-3}{p}\right)\bigg) & \text{ when } 4\nmid N\text{ and } 9\nmid N, \\
0 & \text{ otherwise. }
\end{cases}
\end{equation}
By \cite[Theorem 4.2.7]{miy89}, the number of inequivalent cusps for $\Gamma_0(N)$ is given by
\begin{equation}\label{e:tauN}
\tau(N):= \tau = \sum_{\delta\mid N} \varphi\left(\gcd(\delta, N/\delta)\right)
= \prod_{p\mid N} \bigg(p^{\lfloor\frac{{\rm ord}_p(N)}{2}\rfloor} + p^{\lfloor\frac{{\rm ord}_p(N)-1}{2}\rfloor}\bigg)>0.
\end{equation}
By \cite[(2.11)]{Iwa02}, the index of $\Gamma_0(N)$ in $\Gamma_0(1)={\rm SL}_2(\mathbb{Z})$ is
\begin{equation*}
[\Gamma_0(1): \Gamma_0(N)] = N\prod_{p\mid N} \left(1+p^{-1}\right).
\end{equation*}
It is well known that $\omega({\rm SL}_2(\mathbb{Z})\backslash \mathbb{H}) = \frac{\pi}{3}$,
from which we get that
\begin{equation}\label{e:vol_Gamma0N}
\omega(\Gamma_0(N)\backslash \mathbb{H}) = \frac{\pi}{3}N\prod_{p\mid N}(1+p^{-1}).
\end{equation}
Let $g(N)=g$ denote the genus of $\Gamma_0(N)\backslash \mathbb{H}$.
Then by using \eqref{eq: volume of F}, we write \eqref{e:vol_Gamma0N} in terms of topological data associated to $\Gamma_0(N)$,
namely that
\begin{equation}\label{e:vol_F_Gamma0N}
\frac{\omega(\Gamma_0(N)\backslash \mathbb{H})}{2\pi}
= \frac{1}{6}N\prod_{p\mid N}(1+p^{-1})
= 2g(N)-2 +\sum_{j=1}^{\rho(N)} (1-\nu_j^{-1}) + \tau(N).
\end{equation}

\subsection{Action of the character on elliptic and parabolic representatives}\label{sec: character action}
Let us now employ the notation from Section \ref{sec: mult systems}.
For each elliptic representative $R_j\in \Gamma_0(N)$ of order $\nu_j$ for $j=1,\ldots,\rho$,
 one has that  $\chi(R_j)^{\nu_j}=1$.  Hence, $\chi(R_j)$ has the form that
\begin{equation*}
\chi(R_j) = e^{-\frac{2\pi i}{\nu_j} \alpha_j} = \begin{cases} e^{-2\pi i \frac{\alpha_j}{2}} = (-1)^{\alpha_j} & \text{ when } \nu_j=2, \\
e^{-2\pi i \frac{\alpha_j}{3}} = \bigg(\frac{-1+\sqrt{3}}{2}\bigg)^{\alpha_j} & \text{ when } \nu_j=3.
\end{cases}
\end{equation*}
Since $m=1$,  $r_j=1$ if and only if $\chi(R_j)=1$, otherwise $r_j=0$.
Therefore the elliptic degree of singularity of $\chi$ is
\begin{equation}\label{e:tildetau0_N}
\tilde{\tau}_0 = \sum_{j=1}^{\rho} r_j = \# \left\{R_j: \, \chi(R_j)=1\right\}.
\end{equation}
The values $r_j(\ell)$ also can take only two possible values.  Either $r_j(\ell)=1$ when $\alpha_j+\ell \equiv 0\bmod{\nu_j}$,
or $r_j(\ell)=0$ in all other instances.  .

From \eqref{e:limR_k=0_N}, we have the following lemma.

\begin{lemma}\label{lem:limR_k=0_N_ell}
The elliptic contribution for $|\lim_{s\to 0} (s^{-{\rm ord}_R(0)} R(s; \chi))|$ is equal to
\begin{equation}\label{e:limR_k=0_N_ell}
\prod_{j=1}^\rho\left(\frac{\nu_j^{r_j}}{\prod_{\ell=1}^{\nu_j-1} \left|\sin\left(\pi \frac{\ell}{\nu_j}\right)\right|^{r_j(\ell)}}\right)
= 2^{C_{1}} 3^{C_{2}}
\left(\frac{\sqrt{3}}{2}\right)^{-C_{3}}.
\end{equation}
where
\begin{align}\notag
C_{1} :&= \#\{R_j:\, \nu_j=2 \text{ and } \chi(R_j)=1\},\,\, C_{2} := \#\{R_j:\, \nu_j=3\text{ and } \chi(R_j)=1\}, \\ &\text{  and  } \quad C_{3} := \#\{R_j:\, \nu_j=3 \text{ and } \chi(R_j)\neq 1\}. \label{eq:exponentof2}
\end{align}
\end{lemma}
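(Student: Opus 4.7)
The plan is to perform a direct case analysis on each elliptic factor, using that for $\Gamma_0(N)$ the only possible elliptic orders are $\nu_j\in\{2,3\}$ by the facts recalled in Section \ref{sec:notation_N}. For each $j$, the factor $\chi(R_j) = e^{-2\pi i \alpha_j/\nu_j}$ is determined by an integer $\alpha_j \in \{0,\ldots,\nu_j-1\}$, and since $m=1$, we have $r_j = 1$ precisely when $\alpha_j \equiv 0 \pmod{\nu_j}$ and $r_j=0$ otherwise; similarly $r_j(\ell) \in \{0,1\}$ with $r_j(\ell)=1$ iff $\alpha_j + \ell \equiv 0 \pmod{\nu_j}$. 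So each factor in the product on the left of \eqref{e:limR_k=0_N_ell} falls into one of at most four classes, indexed by the pair $(\nu_j, [\chi(R_j)=1])$.

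First I would record the contribution of a single $j$ in each case. When $\nu_j=2$ and $\chi(R_j)=1$ (so $\alpha_j = 0$), we get $r_j = 1$ and $r_j(1)=0$, contributing the factor $2$. When $\nu_j=2$ and $\chi(R_j)\neq 1$ (so $\alpha_j=1$), we get $r_j=0$ and $r_j(1)=1$, contributing $1/\sin(\pi/2) = 1$. When $\nu_j = 3$ and $\chi(R_j) = 1$ (so $\alpha_j = 0$), we get $r_j = 1$ and $r_j(1)=r_j(2)=0$, contributing the factor $3$. Finally, when $\nu_j=3$ and $\chi(R_j)\neq 1$ (so $\alpha_j\in\{1,2\}$), $r_j = 0$ and exactly one of $r_j(1), r_j(2)$ equals $1$; in either subcase the denominator is $|\sin(\pi/3)| = |\sin(2\pi/3)| = \sqrt{3}/2$, contributing $(\sqrt{3}/2)^{-1}$.

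Second, I would assemble the global product by counting how many $j$'s fall in each class. By the definitions \eqref{eq:exponentof2} of $C_1, C_2, C_3$, the number of indices giving the first contribution is $C_1$, those giving the third is $C_2$, and those giving the fourth is $C_3$; the remaining indices (with $\nu_j=2$, $\chi(R_j)\neq 1$) contribute $1$ and can be ignored. Multiplying these together yields
\[
2^{C_1} \cdot 1 \cdot 3^{C_2} \cdot (\sqrt{3}/2)^{-C_3},
\]
which is the right-hand side of \eqref{e:limR_k=0_N_ell}.

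There is no serious obstacle here; the only thing that warrants a brief sentence in the write-up is the verification in the $\nu_j=3$, $\chi(R_j)\neq 1$ case that the two sub-possibilities $\alpha_j\in\{1,2\}$ produce the same contribution, which follows from the identity $\sin(\pi/3) = \sin(2\pi/3)$. Once the case-by-case table is in place, the lemma is immediate.
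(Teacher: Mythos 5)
Your case analysis is correct and is exactly the argument the paper gives (the paper's proof is a one-sentence sketch of evaluating $\sin(\pi\ell/\nu_j)$ for $\nu_j\in\{2,3\}$ and counting how often each case occurs); your write-up just fills in the details. The individual contributions $2$, $1$, $3$, and $(\sqrt{3}/2)^{-1}$ in the four classes all check out, including the observation that $\sin(\pi/3)=\sin(2\pi/3)$ handles both subcases when $\nu_j=3$ and $\chi(R_j)\neq 1$.
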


\begin{proof}
We can get \eqref{e:limR_k=0_N_ell} by evaluating $\sin(\pi\ell/\nu_j)$ in the left-hand side of \eqref{e:limR_k=0_N_ell} for $\nu_j=2$ and $\nu_j=3$ and counting the number of each case appearing.
\end{proof}

Following \cite[\S2.5]{BLS20}, we can parameterise the parabolic representatives as the following.
Each cusp of $\Gamma=\Gamma_0(N)$ is equivalent to a cusp of the form
\[\frac{a}{c}
\,\,\,
\text{\rm for}
\,\,\, c\in \mathbb{Z}, \, c\geq 1, \, c\mid N, \, a\bmod{\gcd(c, N/c)},
\,\,\, \text{\rm and} \,\,\, \gcd(c, a)=1.
\]
By \cite[p.1089 and (2.45)]{BLS20}, we can fix a set $C_\Gamma$ containing exactly one representative $\frac{a}{c}$
which satisfies the above stated conditions, meaning that
\begin{equation*}
C_\Gamma = \left\{\frac{a}{c} :\, a, c\in \mathbb{Z}, \, c\geq 1, \, c\mid N, \, \gcd(a, c)=1\right\}/\sim_{\Gamma_0(N)}.
\end{equation*}
We make our choice of $C_\Gamma$ in such a way that
\begin{equation*}
\begin{cases}
\text{ for every $\frac{a}{c}\in C_\Gamma$ with $\gcd(c, N/c)>2$, we also have $\frac{-a}{c}\in C_\Gamma$}; \\
\text{ for every $c\mid N$ with $\gcd(c, N/c)\leq 2$, we have that $\frac{1}{c}\in C_\Gamma$}.
\end{cases}
\end{equation*}

For each such $\frac{a}{c}$, we fix a choice of scaling matrix by
\[W_{a/c} = \bpm a & b\\ c & d\ebpm \in {\rm SL}(2,\mathbb{Z}).\]
We also set
\[N_{a/c} = \bpm \sqrt{\frac{\gcd(c^2, N)}{N}} & 0 \\ 0 & \sqrt{\frac{N}{\gcd(c^2, N)}}\ebpm W_{a/c}^{-1}\]
and
\[S_{a/c} = N_{a/c}^{-1} \bpm 1 & -1\\ 0 & 1\ebpm N_{a/c} \in \Gamma_0(N), \]
which are parabolic representatives for the cusp $\frac{a}{c}$ of $\Gamma$.
Then
$$
\Gamma_{a/c} = \left\{\gamma\in \Gamma:\, \gamma \frac{a}{c} = \frac{a}{c}\right\} = \left<S_{a/c}\right>.
$$
Therefore, $m_{a/c}=1$ when $\chi(S_{a/c})=1$, meaning when $\frac{a}{c}$ is singular.
Note that in \cite[p.1076]{BLS20} the authors used the term \it open cusp \rm rather than singular.
Also, at this time we have indexed the cusps by $a/c$ and not by $j=1,\ldots, \tau(N)$ so that we are
consistent with the notation from \cite{BLS20}.  In particular,
the notation $m_j$ in Section \ref{sss:parabolic} corresponds to $m_{a/c}$ here, and
the degree of singularity of $\chi$ is then $\tau_0 = \sum_{a/c\in C_{\Gamma}}m_{a/c}$.

Let
\[\lambda_{a/c} = \chi(S_{a/c}) = e^{2\pi i \beta_{a/c}}, \quad \beta_{a/c}\in (0, 1].\]
By \cite[Lemma 2.9, (2.48)]{BLS20}, we write that
\begin{equation*}\label{e:chiSa/c}
\chi(S_{a/c}) = \chi\left(1-ac\frac{N}{\gcd(c^2, N)}\right)
= \chi\left(1-a\frac{N}{\gcd(c, N/c)}\right),
\end{equation*}
for $c\mid N$.
By \cite[Lemma 2.11]{BLS20}, $\frac{a}{c}$ is singular (or open) if and only if
$$
q={\rm cond}(\chi)\mid \frac{N}{\gcd(c, N/c)},
$$
as can be easily observed from the above discussion.
Furthermore by \cite[Lemma 2.11]{BLS20}, we have that
\begin{align}\label{e:tau0_N}\notag
\tau_0 &= \sum_{\substack{c\mid N\\ q\mid \frac{N}{\gcd(c, N/c)}}} \varphi(\gcd(c, N/c))
\\[3mm]&= \prod_{p\mid N}
\begin{cases} 2p^{{\rm ord}_p(N/q)} & \text{ if } {\rm ord}_p(N) < 2{\rm ord}_p(q), \\
p^{{\lfloor\frac{{\rm ord}_p(N)}{2}\rfloor}} + p^{\lfloor\frac{{\rm ord}_p(N)-1}{2}\rfloor} &\text{ otherwise. }
\end{cases}
\end{align}
With all this,  we conclude that the parabolic contribution
for \eqref{e:limR_k=0_N} can be written as
\begin{equation}\label{e:limR_k=0_N_parabolic}
\prod_{j=1}^{\tau} \prod_{p=m_j+1}^1 |\sin(\pi\beta_{jp})|^{-1}
= \prod_{\substack{c\mid N\\ \gcd(c, N/c)\nmid \frac{N}{{\rm cond}(\chi)}}} \prod_{\substack{a\bmod{\gcd(c, N/c)}\\ \gcd(a, c)=1}} 2 \left|1-\chi(S_{a/c})\right|^{-1}.
\end{equation}

\subsection{Scattering matrix}\label{ss:scatteringmtx_congruence}
We set the notation as above, meaning that $\Gamma_0(N)$ is a congruence subgroup of level $N$ and $\chi$ is a Dirichlet character modulo $N$ of conductor $q\mid N$, viewed as a multiplier system on $\Gamma_0(N)$ of weight $k=0$.
In this section, we follow \cite[\S2]{BLS20} and describe explicitly the determinant $\varphi(s;\chi)$ of the scattering matrix for $\Gamma_0(N)$ of weight $0$ with nebentypus $\chi$. Then, we compute the lead term in the Laurent series expansion of the Dirichlet series portion $\tilde{\varphi}(s;\chi)$ of $\varphi(s;\chi)$; see formulas \eqref{phiDirich}--\eqref{eqPhiB} with $k=0$.

We let $\xi_1$ and $\xi_2$ denote primitive Dirichlet characters satisfying $\xi_1(-1)=\xi_2(-1)$, with conductors $q_1$ and $q_2$ respectively.
We let
\begin{equation*}
F=\left\{(m, \xi_1, \xi_2): \, m\in \mathbb{Z}_{\geq 1}, \, mq_1\mid N, \, q_2\mid m, \, {\rm cond}(\chi\xi_2\overline{\xi_1})=1\right\}
\end{equation*}
and
\begin{equation*}
F_0 = \left\{(m, \xi_1, \xi_2)\in F:\, \xi_1=\overline{\xi_2}\right\}.
\end{equation*}
Note that ${\rm cond}(\chi\xi_2\overline{\xi_1})=1$ if and only if $\chi\xi_2(a)= \xi_1(a)$ for all $a\in (\mathbb{Z}/N\mathbb{Z})^\times$;
however, this does not imply $\chi\xi_2=\xi_1$ since the product $\chi\xi_2$ is not necessarily primitive.
For any $m\mid N$, we set
\begin{equation*}
F_m= \left\{(\xi_1, \xi_2):\, (m, \xi_1, \xi_2)\in F\right\}.
\end{equation*}
Let us write
\begin{equation*}
G_{N, q} = \left\{m\mid N:\, q\mid \frac{N}{\gcd(m, N/m)}\right\}.
\end{equation*}
By \cite[Lemma 2.21]{BLS20}, $F_m\neq \emptyset$ if and only if $m\in G_{N, q}$.
Moreover $\#F_m=\varphi(\gcd(m, N/m))$ and we have
\begin{equation*}
\#F = \sum_{m\in G_{N, q}} \# F_m = \sum_{m\in G_{N, q}}\varphi(\gcd(m, N/m)) = \tau_0.
\end{equation*}
For $\#F_0$, see \cite[Lemma 2.34]{BLS20}.
From \cite[(2.104)]{BLS20}, the determinant of the scattering matrix is given by
\begin{align*}
\varphi(s; \chi) &= (-1)^{\frac{\#F-\#F_0}{2}}
\bigg(\prod_{m\in G_{N, q}} \bigg(\frac{N}{\gcd(m, N/m)}\bigg)^{\varphi(\gcd(m, N/m))}\bigg)^{1-2s}
\bigg(\frac{\pi^{2s-1} \Gamma\left(1-s\right)}{\Gamma\left(s\right)}\bigg)^{\# F}
\\& \times \prod_{(m, \xi_1, \xi_2)\in F} \bigg(q_1^{1-2s} \frac{L(2-2s, \xi_1\xi_2\omega_m)}{L(2s, \xi_1\xi_2\omega_m)}\bigg),
\end{align*}
where $\omega_m$ denotes the trivial Dirichlet character modulo $m$.
We note that for each $(m, \xi_1, \xi_2)\in F$, $\xi_1\xi_2\omega_m$ is a Dirichlet character modulo $mq_1$.


For $\text{Re}(s) > 1$ we can write $\varphi(s; \chi) =L(s)\tilde{\varphi}(s; \chi)$ where
$
L(s) =\left( \frac{\Gamma \left( s-\frac{1}{2}
\right) }{\Gamma \left( s\right) }\right) ^{\tau_0}d(1) e^{c_{1}s}.
$
For our purposes, it is not necessary to compute the explicit values of $c_1$ and $d(1)$.
Note that $\# F = \tau_0$, so then we have
\begin{align}\label{e:tvarphi_Gamma0N}\notag
\tilde{\varphi}(s; \chi)
= &L(s)^{-1}d(1)^{-1} e^{-c_1s} \varphi(s; \chi)
\\ &\notag= (-1)^{\frac{\# F-\# F_0}{2}}
\bigg(\prod_{m\in G_{N, q}} \bigg(\frac{N}{\gcd(m, N/m)}\bigg)^{\varphi(\gcd(m, N/m))}\bigg)^{1-2s}
e^{-c_1s}d(1)^{-1}
\\& \times \bigg(\frac{\pi^{2s-1} \Gamma\left(1-s\right)}{\Gamma \left( s-\frac{1}{2}\right) }
\bigg)^{\# F}
\prod_{(m, \xi_1, \xi_2)\in F} \bigg(q_1^{1-2s} \frac{L(2-2s, \xi_1\xi_2\omega_m)}{L(2s, \xi_1\xi_2\omega_m)}\bigg).
\end{align}
Let us recall the Laurent expansion \eqref{eqPhiB}. We have the following lemma.

\begin{lemma}\label{lem:an0_N}
With the notation set above, we have
\begin{equation*}\label{e:n0_Gamma0N}
n_0 = - (\# F-\# F_0)-\sum_{(m, \xi_1, \xi_2)\in F} \sum_{\substack{p\mid mq_1 \\ (\xi_1\xi_2)_*(p)=1}} 1
\end{equation*}
and

\begin{align}\label{e:an0ec2_Gamma0N}\notag
a_{n_0} d(1)
&= (-1)^{\frac{\# F-\# F_0}{2}-(\#F-\#F_0)} (2\pi^{\frac{3}{2}})^{-\# F} (\pi^2 3^{-1})^{\#F_0}
\prod_{m\in G_{N, q}} \bigg(\frac{N}{\gcd(m, N/m)}\bigg)^{\varphi(\gcd(m, N/m))}
\\& \notag \times \prod_{(m, \xi_1, \xi_2)\in F} \bigg(\frac{{\rm cond}(\xi_1)}{{\rm cond}(\xi_1\xi_2)^{\frac{1}{2}}}
\frac{\prod_{p\mid mq_1}(1-(\xi_1\xi_2)_*(p)p^{-2})}{\prod_{p\mid mq_1, (\xi_1\xi_2)_*(p)\neq 1} (1-(\xi_1\xi_2)_*(p)) \prod_{p\mid mq_1, (\xi_1\xi_2)_*(p)=1}(-2\log p)}\bigg)
\\& \times \prod_{(m, \xi_1, \xi_2)\in F\setminus F_0}
\frac{L(2, (\xi_1\xi_2)_*)}{L(1, \overline{(\xi_1\xi_2)_*})}.
\end{align}
\end{lemma}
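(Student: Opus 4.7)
The plan is to expand $\tilde{\varphi}(s;\chi)$, as given by the product formula \eqref{e:tvarphi_Gamma0N}, into a Laurent series at $s=0$. I would handle the Gamma quotient and the two families of $L$-function ratios (over $F_0$ and over $F\setminus F_0$) separately, sum their orders of vanishing to recover $n_0$, and multiply their leading coefficients to recover $a_{n_0}d(1)$. The Gamma quotient is immediate: using $\Gamma(1)=1$ and $\Gamma(-\frac{1}{2})=-2\sqrt{\pi}$, one gets $\pi^{2s-1}\Gamma(1-s)/\Gamma(s-\frac{1}{2})\big|_{s=0}=-1/(2\pi^{3/2})$, which accounts for the $(2\pi^{3/2})^{-\#F}$ factor and a sign $(-1)^{\#F}$.

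For $(m,\xi_1,\xi_2)\in F_0$ the character $\xi_1\xi_2\omega_m$ is principal modulo $mq_1$, so $L(s,\xi_1\xi_2\omega_m)=\zeta(s)\prod_{p\mid mq_1}(1-p^{-s})$. Near $s=0$ the factor $\zeta(2s)$ is regular with value $-\frac{1}{2}$, while each Euler factor $(1-p^{-2s})=2s\log p+O(s^2)$ contributes one order of vanishing, so $L(2s,\xi_1\xi_2\omega_m)$ has a zero of order $\omega(mq_1)$ with leading coefficient $-\frac{1}{2}\prod_{p\mid mq_1}(2\log p)$. The numerator $L(2-2s,\xi_1\xi_2\omega_m)$ is regular at $s=0$ and evaluates to $(\pi^2/6)\prod_{p\mid mq_1}(1-p^{-2})$; the ratio thereby produces the $(\pi^2 3^{-1})^{\#F_0}$ factor together with the $F_0$ part of the explicit product in \eqref{e:an0ec2_Gamma0N}.

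For $(m,\xi_1,\xi_2)\in F\setminus F_0$ the primitive version $\eta_{\ast}=(\xi_1\xi_2)_{\ast}$ is a non-trivial even primitive character, since $\xi_1(-1)=\xi_2(-1)$ forces $\eta_{\ast}(-1)=1$, and hence $L(0,\eta_{\ast})=0$. The functional equation for even primitive Dirichlet $L$-functions gives $L(s,\eta_{\ast})=\frac{1}{2}\tau(\eta_{\ast})L(1,\bar{\eta}_{\ast})s+O(s^2)$, where $\tau(\eta_{\ast})=W(\eta_{\ast})\sqrt{f}$ and $f=\mathrm{cond}(\eta_{\ast})$. The Euler factors $\prod_{p\mid mq_1,\,p\nmid f}(1-\eta_{\ast}(p)p^{-2s})$ contribute one additional zero for each $p\mid mq_1$ with $\eta_{\ast}(p)=1$, giving the claimed order $1+\#\{p\mid mq_1:\eta_{\ast}(p)=1\}$ and producing the ratio $L(2,\eta_{\ast})/L(1,\bar{\eta}_{\ast})$ in the leading coefficient, up to the factor $\sqrt{f}=\mathrm{cond}(\xi_1\xi_2)^{1/2}$ and the Gauss sum phase. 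Summing all orders then yields the stated formula for $n_0$.

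The main obstacle is dealing with the phases $W(\eta_{\ast})$, which must cancel for the answer to be real as stated. My plan is to use the involution $(m,\xi_1,\xi_2)\mapsto(m,\bar{\xi}_2,\bar{\xi}_1)$ on $F$; one checks that the defining condition $\chi\xi_2\bar{\xi}_1\equiv 1$ implies $\chi\bar{\xi}_1\xi_2\equiv 1$, so this map preserves $F$, and its fixed-point set is exactly $F_0$. Paired elements of $F\setminus F_0$ yield characters $\eta_{\ast}$ and $\bar{\eta}_{\ast}$, so the classical identity $W(\eta_{\ast})W(\bar{\eta}_{\ast})=\eta_{\ast}(-1)=1$ collapses the phases pairwise. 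The remaining task is to combine the $(-1)^{(\#F-\#F_0)/2}$ from \eqref{e:tvarphi_Gamma0N}, the $(-1)^{\#F}$ from the Gamma quotient, the sign from $\zeta(0)=-\frac{1}{2}$ per $F_0$ element, and the sign introduced by replacing $\prod(2\log p)$ with $\prod(-2\log p)$ in the denominator of \eqref{e:an0ec2_Gamma0N}, into the compact form $(-1)^{(\#F-\#F_0)/2-(\#F-\#F_0)}$; the involution guarantees that the $F\setminus F_0$ sign contributions cancel in pairs, and a direct parity count on the $F_0$ elements finishes the bookkeeping.
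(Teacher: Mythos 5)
Your route is essentially the paper's: expand $\tilde{\varphi}(s;\chi)$ from \eqref{e:tvarphi_Gamma0N} factor by factor at $s=0$, with the Gamma quotient contributing $-1/(2\pi^{3/2})$ per element of $F$, the $F_0$ terms contributing $\zeta(2)=\pi^2/6$ against $\zeta(0)=-\tfrac12$ together with the Euler-factor zeros $(1-p^{-2s})=2s\log p+O(s^2)$, and the $F\setminus F_0$ terms contributing a simple zero of $L(2s,(\xi_1\xi_2)_*)$ via the functional equation for even primitive characters; the orders sum to $n_0$ and the leading coefficients multiply to $a_{n_0}d(1)$. Your insistence on tracking the root numbers is warranted rather than pedantic: the functional equation as invoked from \cite[Theorem 4.15]{IK04} carries the factor $\epsilon((\xi_1\xi_2)_*)$, which the paper's displayed version omits, and the stated real-valued formula is only correct because these phases cancel over all of $F\setminus F_0$. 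So on this point you are supplying a step the paper needs.

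The gap is in the cancellation mechanism you propose. The map $(m,\xi_1,\xi_2)\mapsto(m,\overline{\xi_2},\overline{\xi_1})$ does \emph{not} preserve $F$: membership in $F$ requires $m\,q_1\mid N$ and $q_2\mid m$, and these two conditions are not interchanged by swapping the characters while keeping $m$ fixed; you verified only the condition $\mathrm{cond}(\chi\xi_2\overline{\xi_1})=1$, which is trivially symmetric. A counterexample sits in the paper's own computation for $N=\ell^2$ with $\mathrm{cond}(\chi)=\ell^2$: there $(1,\chi,1)\in F$, but $(1,1,\overline{\chi})\notin F$ since $\ell^2=\mathrm{cond}(\overline{\chi})\nmid 1$; the conjugate partner actually listed is $(\ell^2,1,\overline{\chi})$. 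The repair is to move $m$ as well: the map $(m,\xi_1,\xi_2)\mapsto(N/m,\overline{\xi_2},\overline{\xi_1})$ does preserve $F$ (indeed $(N/m)q_2\mid N$ is equivalent to $q_2\mid m$, and $q_1\mid N/m$ is equivalent to $mq_1\mid N$), it sends $(\xi_1\xi_2)_*$ to its conjugate, it maps $F\setminus F_0$ to itself without fixed points, and it therefore yields the pairwise cancellation $W(\eta)W(\overline{\eta})=\eta(-1)=1$ you want, as well as the pairing $L(2,\eta_*)/L(1,\overline{\eta}_*)\cdot L(2,\overline{\eta}_*)/L(1,\eta_*)$ that makes the product real. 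Be aware also that your closing sign bookkeeping is more delicate than the sketch suggests: the per-prime signs from rewriting $2\log p$ as $-2\log p$ are indexed by $p\mid mq_1$ with $(\xi_1\xi_2)_*(p)=1$, and the paired triples need not contribute the same set of such primes, so the claimed pairwise cancellation of these particular signs requires a separate check. Since the paper only ever uses $|a_{n_0}d(1)|$ downstream, this affects at most the overall sign, but if you intend to prove the signed identity you must carry that count through explicitly.
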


\begin{proof}
From \eqref{e:tvarphi_Gamma0N}, we first compute $n_0$ and take $s\to 0$ for $s^{-n_0} \tilde{\varphi}(s; \chi)$.

As one can see from  \eqref{e:tvarphi_Gamma0N},
the possible poles and zeros of $\tilde{\varphi}(s; \chi)$ only occur from the term involving the ratio of $L$-functions and Gamma functions. 
The ratio of Gamma functions has no zeros or poles around $s=0$.
For any $(m, \xi_1, \xi_2)\in F$, $L(2-2s, \xi_1\xi_2\omega_m)$ is analytic and has no zeros at $s=0$.
However $L(2s, \xi_1\xi_2\omega_m)$ has zeros at $s=0$ except in the case when $m=1$ and $\xi_1=\xi_2=1$.
In that case, $L(2s, \xi_1\xi_2\omega_m) = \zeta(2s)$ and $\zeta(0)=-\frac{1}{2}$.

More precisely, we have that
\begin{equation*}
L(2s, \xi_1\xi_2\omega_m)
= \bigg(\prod_{\substack{p\mid mq_1 \\ p\nmid {\rm cond}(\xi_1\xi_2)}} (1-(\xi_1\xi_2)_*(p)p^{-2s}) \bigg) L(2s, (\xi_1\xi_2)_*)
\end{equation*}
where $(\xi_1\xi_2)_*$ is the primitive character such that $\xi_1\xi_2$ is induced from and the product is taken over all primes $p$ satisfying stated conditions.
Note that by the assumption $\xi_1(-1)=\xi_2(-1)$, so then $\xi_1\xi_2$ is an even character.
By \cite[Theorem 4.15]{IK04}, we have the functional equation that
\begin{equation*}
L(2s, (\xi_1\xi_2)_*) = \left(\frac{{\rm cond}(\xi_1\xi_2))}{\pi}\right)^{\frac{1}{2}-2s} \frac{\Gamma\left(\frac{1}{2}-s\right)}{\Gamma\left(s\right)} L(1-2s, \overline{(\xi_1\xi_2)_*}).
\end{equation*}
When $(\xi_1\xi_2)_*=1$, then $L(0, (\xi_1\xi_2)_*) = \zeta(0) = -\frac{1}{2}.$
Otherwise,
\begin{equation*}
\lim_{s\to 0} \frac{L(2s, (\xi_1\xi_2)_*)}{s}
= ({\rm cond}(\xi_1\xi_2))^{\frac{1}{2}} L(1, \overline{(\xi_1\xi_2)_*}).
\end{equation*}
For a prime $p\mid mq_1$ with $p\nmid {\rm cond}(\xi_1\xi_2)$, if $(\xi_1\xi_2)_*(p)=1$ then
$(1-(\xi_1\xi_2)_*(p)p^{-2s}) = (1-p^{-2s})$ has a zero at $s=0$.
In particular, $$
\lim_{s\to 0} \frac{(1-p^{-2s})}{s} = -2\log p.$$

Recall that $F_{0} = \left\{(m, \xi_1, \xi_2)\in F:\, {\rm cond}(\xi_1\xi_2)=1\right\}$.
So we have that
\begin{equation*}
n_0 = \sum_{(m, \xi_1, \xi_2)\in F} \#\left\{p\mid mq_1:\, (\xi_1\xi_2)_*(p)=1 \right\} - (\#F-\#F_0).
\end{equation*}
Note that $n_0<0$.  Upon taking the limit $s\to 0$, we get
\begin{align*}
\lim_{s\to 0} (s^{-n_0} \tilde{\varphi}(s; \chi))
=&
 (-1)^{\frac{\# F-\# F_0}{2}}d(1)^{-1}(-2\pi^{\frac{3}{2}})^{-\# F}
\prod_{m\in G_{N, q}} \bigg(\frac{N}{\gcd(m, N/m)}\bigg)^{\varphi(\gcd(m, N/m))}
\\& \times \prod_{(m, \xi_1, \xi_2)\in F_0} \bigg(-2 q_1 \zeta(2) \prod_{p\mid mq_1}\bigg( \frac{1-p^{-2}}{-2\log p}\bigg)\bigg)
\\ \times \prod_{(m, \xi_1, \xi_2)\in F\setminus F_0}
\bigg( q_1&
\frac{\prod_{p\mid mq_1} (1-(\xi_1\xi_2)_*(p) p^{-2})}{\prod_{\substack{p\mid mq_1\\ (\xi_1\xi_2)_*(p)\neq 1}} (1-(\xi_1\xi_2)_*(p))
\prod_{\substack{p\mid mq_1\\ (\xi_1\xi_2)_*(p)=1}} (-2\log p)}
\frac{L(2, (\xi_1\xi_2)_*)}{({\rm cond}(\xi_1\xi_2))^{\frac{1}{2}} L(1, \overline{(\xi_1\xi_2)_*})} \bigg).
\end{align*}
From this identity, and using that $\zeta(2)=\frac{\pi^2}{6}$, we obtain \eqref{e:an0ec2_Gamma0N}.
\end{proof}

\begin{example}\rm
Assume that $\chi$ is the trivial character modulo $N$, so $q=1$.
Then
\begin{equation*}
F = \left\{(m, \xi_1, \xi_1): \, m\in \mathbb{Z}_{\geq 1},\, mq_1\mid N, \, q_1\mid m \right\}
= \left\{(mq_1, \xi_1, \xi_1):\, m\in \mathbb{Z}_{\geq 1}, \, mq_1^2\mid N\right\}.
\end{equation*}
and
\begin{equation*}
F_0 = \left\{(mq_1, \xi_1, \xi_1):\, m\in \mathbb{Z}_{\geq 1}, \, mq_1^2\mid N, \, \xi_1^2=1\right\}.
\end{equation*}
If we further assume that $N$ is square-free, then $F = F_0=\{(1, 1, 1)\}$ and $G_{N,1}$ is the set of divisors of $N$.
In this case formula \eqref{e:an0ec2_Gamma0N} reduces to
\begin{equation}\label{eq: an0c2 N sqfree}
a_{n_0}d(1)= \frac{\sqrt{\pi}}{6}N^{2^{\omega(N)}},
\end{equation}
where $\omega(N)$ is the number of prime divisors of $N$.
\end{example}

\subsection{The expansion of $R(s;\chi)$ at $s=0$}

In this section we compute the absolute value of the leading term in the expansion at $s=0$ of the Ruelle zeta function $R(s;\chi)$ associated to the congruence group $\Gamma_0(N)$ and the Dirichlet character modulo $N$, viewed as a multiplier system of weight zero.
This amounts to computing the right-hand side of \eqref{e:limR_k=0_N}.

We combine Lemma \ref{lem:limR_k=0_N_ell} and \eqref{e:limR_k=0_N_parabolic}
\begin{proposition}\label{prop:limR_k=0_N}
Assume $\Gamma=\Gamma_0(N)$, $\chi$ is a Dirichlet character modulo $N$ with ${\rm cond}(\chi)=q$ and $k=0$.
Then we have, with the notations in Sections \ref{sec:notation_N} and \ref{sec: character action} that
\begin{equation*}
{\rm ord}_{R}(0) = (\tau_0-\# F_0) + \sum_{(m, \xi_1, \xi_2)\in F} \sum_{\substack{p\mid mq_1 \\ (\xi_1\xi_2)_*(p)=1}} 1
+ 2g-2+\rho+\tau-\tau_0-\tilde{\tau}_0
\end{equation*}
and
\begin{equation*}\label{e:limR_k=0_N_formula}
\left|\lim_{s\to 0} (s^{-{\rm ord}_R(0)} R(s; \chi))\right|
= |d(1) a_{n_0}|^{-1}
2^{(2g-2)} \pi^{(2g-2+\rho +\tau)-\frac{1}{2}\tau_0-\tilde{\tau}_0}
E(N; \chi) P(N; \chi)
\end{equation*}
where
\begin{equation*}\label{e:ENchi_def}
E(N; \chi)= 2^{C_{1}} 3^{C_{2}}\left(\frac{\sqrt{3}}{2}\right)^{-C_{3}}
\end{equation*}
and
\begin{equation}\label{e:PNchi_def}
P(N; \chi)
= \prod_{\substack{c\mid N\\ \gcd(c, N/c)\nmid \frac{N}{{\rm cond}(\chi)}}} \prod_{\substack{a\bmod{\gcd(c, N/c)}\\ \gcd(a, c)=1}} 2 \left|1-\chi(S_{a/c})\right|^{-1}.
\end{equation}
Here we also have
\begin{align}\label{e:an0_abs_N}\notag
|a_{n_0}d(1)|
&= (2\pi^{\frac{3}{2}})^{-\# F} (\pi^2 3^{-1})^{\#F_0}
\prod_{m\in G_{N, q}} \bigg(\frac{N}{\gcd(m, N/m)}\bigg)^{\varphi(\gcd(m, N/m))}
\\& \notag \times \prod_{(m, \xi_1, \xi_2)\in F} \bigg(\frac{{\rm cond}(\xi_1)}{{\rm cond}(\xi_1\xi_2)^{\frac{1}{2}}}
\frac{\prod_{p\mid mq_1}(|1-(\xi_1\xi_2)_*(p)p^{-2}|)}{\prod_{p\mid mq_1, (\xi_1\xi_2)_*(p)\neq 1} |1-(\xi_1\xi_2)_*(p)| \prod_{p\mid mq_1, (\xi_1\xi_2)_*(p)=1}2\log p)}\bigg)
\\& \times \prod_{(m, \xi_1, \xi_2)\in F\setminus F_0}
\frac{\left|L(2, (\xi_1\xi_2)_*)\right|}{\left|L(1, \overline{(\xi_1\xi_2)_*})\right|}.
\end{align}
\end{proposition}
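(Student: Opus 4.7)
The plan is to assemble the formula by combining the general results for $k=0$ (Proposition \ref{cor:lim_R_at0}) with the explicit congruence-subgroup data collected in Sections \ref{sec:notation_N}--\ref{ss:scatteringmtx_congruence}. Since $k=0 \in \mathbb{Z}$ and the multiplier system is one-dimensional ($m=1$), Theorem \ref{thm:Ruelle_vanishing_constant} gives
\[
{\rm ord}_R(0) \;=\; (2g-2+\rho+\tau)-\tau_0-\tilde{\tau}_0 - n_0 .
\]
First I would substitute the value of $n_0$ from Lemma \ref{lem:an0_N}, noting the identity $\# F = \tau_0$ which follows from $\# F = \sum_{m \in G_{N,q}} \varphi(\gcd(m,N/m))$ and \eqref{e:tau0_N}; this rewrites $-n_0 = (\tau_0 - \# F_0) + \sum_{(m,\xi_1,\xi_2)\in F} \#\{p\mid mq_1:(\xi_1\xi_2)_*(p)=1\}$, giving the claimed formula for ${\rm ord}_R(0)$.

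Next I would take the absolute value of \eqref{e:limR_k=0_N}, which is the $m=1$, $k=0$ specialization of Proposition \ref{cor:lim_R_at0}. The identity-and-volume prefactor collapses to $2^{2g-2}\pi^{(2g-2+\rho+\tau)-\tfrac12\tau_0-\tilde{\tau}_0}$, while the elliptic factor
\[
\prod_{j=1}^\rho\Bigl(\nu_j^{r_j}\prod_{\ell=1}^{\nu_j-1}|\sin(\pi\ell/\nu_j)|^{-r_j(\ell)}\Bigr)
\]
is computed by Lemma \ref{lem:limR_k=0_N_ell} as $E(N;\chi) = 2^{C_1}3^{C_2}(\sqrt{3}/2)^{-C_3}$, using only $\sin(\pi/2)=1$ and $\sin(\pi/3)=\sqrt{3}/2$ together with the counting in \eqref{eq:exponentof2}. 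The parabolic factor $\prod_{j=1}^\tau\prod_{p=m_j+1}^1|\sin(\pi\beta_{jp})|^{-1}$ is exactly $P(N;\chi)$ of \eqref{e:PNchi_def} once one rewrites the index set over cusps $a/c$ using the parametrization of Section \ref{sec: character action}: a cusp is singular (so contributes nothing) iff $\gcd(c,N/c)\mid N/{\rm cond}(\chi)$, and for the remaining cusps one has $\beta_{a/c}\in(0,1)$ with $e^{2\pi i\beta_{a/c}}=\chi(S_{a/c})$, giving $2|\sin(\pi\beta_{a/c})|^{-1}=2|1-\chi(S_{a/c})|^{-1}$.

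For \eqref{e:an0_abs_N}, I would take absolute values term-by-term in \eqref{e:an0ec2_Gamma0N} of Lemma \ref{lem:an0_N}: the sign $(-1)^{\text{stuff}}$ disappears, the real constants $2\pi^{3/2}$, $\pi^{2}/3$, and $\frac{N}{\gcd(m,N/m)}$ are positive, ${\rm cond}(\xi_1)$ and ${\rm cond}(\xi_1\xi_2)^{1/2}$ are positive integers, and $-2\log p$ becomes $2\log p$ (note that $p\mid mq_1$ forces $p\ge 2$, so $\log p>0$). The factors $(1-(\xi_1\xi_2)_*(p)p^{-2})$ and $(1-(\xi_1\xi_2)_*(p))$ are complex numbers whose absolute values are kept as such. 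No further simplification is available for the ratio $L(2,(\xi_1\xi_2)_*)/L(1,\overline{(\xi_1\xi_2)_*})$, so it is left in that form.

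The main obstacle is bookkeeping rather than any single hard estimate: one must (i) verify $\#F=\tau_0$ so that the two descriptions of the order of vanishing of $\tilde{\varphi}$ at $s=0$ are consistent with the parabolic-contribution count, (ii) match the indexing of cusps used in \cite{BLS20} with the one used in Section \ref{sss:parabolic}, and (iii) be careful that the conventions $\beta_{a/c}\in(0,1]$ from Section \ref{sec: character action} and $\beta_{jp}\in(0,1)$ for non-singular cusps from Section \ref{sss:parabolic} agree on the singular/open dichotomy. Once these bookkeeping steps are checked, the proposition follows by direct substitution.
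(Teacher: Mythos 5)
Your proposal is correct and follows exactly the paper's route: the paper assembles the proposition by specializing Theorem \ref{thm:Ruelle_vanishing_constant} (via the $k=0$ formula \eqref{e:limR_k=0_N}) and substituting Lemma \ref{lem:an0_N} for $n_0$ and $|a_{n_0}d(1)|$, Lemma \ref{lem:limR_k=0_N_ell} for the elliptic factor $E(N;\chi)$, and \eqref{e:limR_k=0_N_parabolic} for the parabolic factor $P(N;\chi)$, using $\#F=\tau_0$. Your bookkeeping checks (the equivalence $q\mid \frac{N}{\gcd(c,N/c)}\iff \gcd(c,N/c)\mid \frac{N}{q}$, the sign of $-2\log p$, and $2|\sin(\pi\beta_{a/c})|=2|\sin(\pi\beta_{a/c})|$ matching $|1-e^{2\pi i\beta_{a/c}}|$) are all the paper implicitly relies on.
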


\begin{remark}
One can find formulas for $\rho$, $\tau$, $\tau_0$, $\tilde{\tau}_0$ and $g$ in \eqref{e:rhoN}, \eqref{e:tauN}, \eqref{e:tau0_N}, \eqref{e:tildetau0_N}, and \eqref{e:vol_F_Gamma0N} respectively.  The constants $C_{1}$, $C_{2}$ and $C_{3}$ are defined
in \eqref{eq:exponentof2}.
The number of elements in $F_0$ can be found in  \cite[Lemma 2.34]{BLS20}.
\end{remark}


For example, when $N=1$, then $\chi=1$, $\rho=2$, $\tau=\tau_0=1=\#F=\#F_0$, $\tilde{\tau_0}=\rho=2$ and
$|a_{n_0} d(1)|=\frac{\sqrt{\pi}}{6}$ which is given by \eqref{eq: an0c2 N sqfree}.
 We can also compute $E(N)$ in the view of formula \eqref{eq: no all elements congr}.
Then ${\rm ord}_R(0) = 2g-2 = -2$ and
\begin{equation*}
\left|\lim_{s\to 0} (s^{-{\rm ord}_R(0)} R(s; \chi))\right|
= \frac{9}{\pi^2}.
\end{equation*}
This evaluation agree with the result stated on \cite[p.79]{Teo20}.

\subsection{Specialization to prime square levels}

Let us now further specialise the computations of Proposition \ref{prop:limR_k=0_N}
to the case when the level $N=\ell^2$ is square of a prime number $\ell\geq 5$.
In doing so, we obtain the following result.

\begin{corollary}
Let $\ell \geq 5$ be a prime, $N=\ell^2$ and $\chi$ be a Dirichlet character modulo $N$ of conductor $\ell^b$ for $b\in \{0, 1, 2\}$.
Assume that the weight $2k=0$.
Then the order of the divisor of Ruelle zeta function $R(s; \chi)$ at $s=0$
is
\begin{equation}\label{e:ordR0_ell2}
{\rm ord}_R(0)
= -2+b -\tilde{\tau}_0+ \frac{(\ell-6)(\ell+1)}{6}
+ \begin{cases}
\frac{5}{3} & \text{ if } \ell\equiv 1\bmod{12}, \\
1 & \text{ if } \ell \equiv 5\bmod{12}, \\
\frac{2}{3} & \text{ if } \ell \equiv 7 \bmod{12}, \\
0 & \text{ if } \ell \equiv 11\bmod{12}.
\end{cases}
\end{equation}
Furthermore,
\begin{equation}
\left|\lim_{s\to 0} (s^{-{\rm ord}_R(0)} R(s; \chi))\right|
= 2^{\tau_0+c_1(\ell)} 3^{4-2b}  \pi^{\tau_0 -3(2-b) + {\rm ord}_R(0)} A_1(b)^{-1}A_2(b)^{-1}A_3(b)^{-1} E(\ell^2; \chi) P(\ell^2; \chi),
\end{equation}
where
\begin{equation*}
\tau_0 = \begin{cases}\ell+1& \text{ if } b\in \{0, 1\}, \\ 2 & \text{ if } b=2,
\end{cases}
\end{equation*}
\begin{equation*}
c_1(\ell) =  \frac{(\ell-6)(\ell+1)}{6}
- \begin{cases} \frac{7}{3} & \text{ if } \ell \equiv 1\bmod{12}, \\
1 & \text{ if } \ell\equiv 5 \bmod{12}, \\
\frac{4}{3} & \text{ if } \ell \equiv 7\bmod{12}, \\
0 & \text{ if } \ell \equiv 11\bmod{12},
\end{cases}
\end{equation*}
\begin{equation*}\label{e:Pell2}
P(\ell^2; \chi) = \begin{cases}
1 & \text{ if } b\in \{0, 1\}, \\
\prod_{\substack{a\bmod{\ell}\\ \gcd(a, \ell)=1}} 2\left|1-\chi(1-a\ell)\right|^{-1}
& \text{ if } b=2,
\end{cases}
\end{equation*}
\begin{equation*}
A_1(b)
= \begin{cases}
\ell^{4+\varphi(\ell)} & \text{ when } b\in \{0, 1\}, \\
\ell^4 & \text{ when } b=2,
\end{cases} \quad \quad
A_2(b) = \begin{cases}
\ell^{1+\frac{\ell-3}{2}}
\frac{(1-\ell^{-2})^3}{(2\log \ell)^3}
& \text{ if } b=0, \\
\ell^{\frac{3}{2}+\frac{\ell-5}{2}}
\frac{(1-\ell^{-2})^2}{(2\log\ell)^2}
& \text{ if } b=1, \\
1 & \text{ if } b=2
\end{cases}
\end{equation*}
and
\begin{equation*}
A_3 (b)
= \begin{cases}
\prod_{\xi\bmod{\ell}, \xi^2\neq 1} \frac{|L(2, \xi^2)|}{|L(1, \xi^2)|} & \text{ if } b=0, \\
\frac{|L(2, \chi)|^2}{|L(1, \chi)|^2}
\prod_{\xi\bmod{\ell}, \xi^2\neq \bar{\chi}} \frac{|L(2, \chi\xi^2)|}{|L(1, \chi\xi^2)|} & \text{ if } b=1, \\
\frac{|L(2, \chi)|^2}{|L(1, \chi)|^2} & \text{ if } b=2.
\end{cases}
\end{equation*}
Here $\tilde{\tau}_0 = \#\left\{R_j:\, \chi(R_j)=1\right\}$
and $E(\ell^2, \chi)$ is given as in Proposition \ref{prop:limR_k=0_N},
\end{corollary}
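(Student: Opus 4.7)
The plan is to specialize Proposition \ref{prop:limR_k=0_N} to $N = \ell^2$ and compute each of the quantities $\rho$, $\tau$, $\tau_0$, $\tilde{\tau}_0$, $g$, $E(\ell^2;\chi)$, $P(\ell^2;\chi)$, and $|a_{n_0}d(1)|$ explicitly, case by case in $b\in\{0,1,2\}$. The formula for ${\rm ord}_R(0)$ in Proposition \ref{prop:limR_k=0_N} has several ingredients, and each must be unpacked for this level. None of the steps is individually difficult; the work lies in systematic enumeration and careful bookkeeping.

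First I would compute the topological and geometric invariants for $\Gamma_0(\ell^2)$. The divisors of $\ell^2$ are $1,\ell,\ell^2$, with $\gcd(1,\ell^2)=\gcd(\ell^2,1)=1$ and $\gcd(\ell,\ell)=\ell$. By \eqref{e:tauN} this gives $\tau=\varphi(1)+\varphi(\ell)+\varphi(1)=\ell+1$. By \eqref{e:vol_F_Gamma0N} the volume satisfies $\omega(\mathcal F)/2\pi = \ell(\ell+1)/6$. The number of elliptic classes $\rho$ is determined by \eqref{e:rhoN} and reduces to evaluating $(\tfrac{-1}{\ell})$ and $(\tfrac{-3}{\ell})$, which depends only on $\ell\bmod{12}$; the four cases $\ell\equiv 1,5,7,11\bmod{12}$ give the four subcases that appear in the stated formula for ${\rm ord}_R(0)$. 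The genus is then read off from \eqref{e:vol_F_Gamma0N}, and the constants $C_1,C_2,C_3$ defining $E(\ell^2;\chi)$ come from Lemma \ref{lem:limR_k=0_N_ell}. The factor $P(\ell^2;\chi)$ from \eqref{e:PNchi_def} is trivial when $b\in\{0,1\}$ because $\gcd(c,N/c)\in\{1\}$ divides $N/{\rm cond}(\chi)$; only when $b=2$ does the cusp at $c=\ell$ contribute, giving the displayed product.

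Next I would enumerate the set $F$ and determine $\tau_0$ and $G_{N,q}$ for each $b$. For $N=\ell^2$, $q=\ell^b$, a triple $(m,\xi_1,\xi_2)\in F$ requires $m q_1\mid \ell^2$, $q_2\mid m$, and $\chi\xi_2\overline{\xi_1}$ trivial on $(\mathbb Z/\ell^2)^\times$. For $b=0$ I expect $F$ to include $(1,1,1)$, $(\ell,1,1)$, $(\ell^2,1,1)$ and $(\ell,\xi,\xi)$ for all nontrivial $\xi$ mod $\ell$, which also constitute $F_0$. For $b=1$ one gets $(\ell^2,1,\chi)$, $(\ell^2,\chi,1)$, together with $(\ell,\xi,\chi\xi)$ ranging over $\xi\bmod\ell$. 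For $b=2$ one gets only $(\ell^2,1,\chi)$ and $(\ell^2,\chi,1)$. The quantity $\tau_0$ is then $\#F$, which gives $\ell+1$ in the cases $b\in\{0,1\}$ and $2$ for $b=2$, agreeing with the formula. The set $G_{N,q}$ is similarly obtained by checking which $m\mid\ell^2$ satisfy $q\mid N/\gcd(m,N/m)$.

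Finally I would substitute these enumerations into Lemma \ref{lem:an0_N} to compute $|a_{n_0}d(1)|$, splitting the product $\prod_{(m,\xi_1,\xi_2)\in F}$ into the $F_0$ factors (which contribute the $-2\log \ell$ terms and $\zeta(2)=\pi^2/6$) and the $F\setminus F_0$ factors (which contribute the ratios $|L(2,(\xi_1\xi_2)_*)|/|L(1,\overline{(\xi_1\xi_2)_*})|$). The $A_1(b)$ factor arises from the product $\prod_{m\in G_{N,q}}(N/\gcd(m,N/m))^{\varphi(\gcd(m,N/m))}$; the $A_2(b)$ factor collects the contributions from $F_0$, including the conductor ratio and the local Euler factors at primes where $(\xi_1\xi_2)_*(p)=1$; the $A_3(b)$ factor collects the $L$-value ratios. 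The main obstacle will be the careful $b$-dependent enumeration of $F$ and $F_0$ together with tracking which triples contribute the $(2\log\ell)$ terms versus which give the $(1-(\xi_1\xi_2)_*(p))^{-1}$ terms; once this bookkeeping is complete, assembly of \eqref{e:ordR0_ell2} and the lead term formula is routine substitution into Proposition \ref{prop:limR_k=0_N}.
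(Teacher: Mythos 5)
Your overall strategy is exactly the paper's: the corollary is proved by specializing Proposition \ref{prop:limR_k=0_N}, Lemma \ref{lem:limR_k=0_N_ell} and Lemma \ref{lem:an0_N} to $N=\ell^2$, and the topological computations ($\tau=\ell+1$, the four residue classes of $\ell$ modulo $12$ for $\rho$, the genus from \eqref{e:vol_F_Gamma0N}, and the identification of the single potentially contributing cusp $c=\ell$ in $P(\ell^2;\chi)$) are all handled as in the paper. The problem is that the enumeration of $F$ and $F_0$ --- which you yourself identify as the crux of the bookkeeping --- is wrong in each case of $b$, and these errors propagate into both \eqref{e:ordR0_ell2} and the lead coefficient.

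Concretely: for $b=0$ you assert that the triples $(1,1,1)$, $(\ell,1,1)$, $(\ell^2,1,1)$, $(\ell,\xi,\xi)$ ``also constitute $F_0$.'' They do not. By definition $F_0=\{(m,\xi_1,\xi_2)\in F:\xi_1=\overline{\xi_2}\}$, so a triple $(\ell,\xi,\xi)$ lies in $F_0$ only when $\xi^2=1$, i.e.\ $\xi$ is trivial or the Legendre symbol; hence $\#F_0=4$ while $\#F=\ell+1$. Since $n_0=-(\#F-\#F_0)-\sum(\cdots)$ and the lead coefficient carries the factor $(\pi^2 3^{-1})^{\#F_0}$, conflating $F$ with $F_0$ would give the wrong ${\rm ord}_R(0)$, the wrong powers of $3$ and $\pi$, and would force $A_3(0)=1$, contradicting the nontrivial product over $\xi^2\neq 1$ in the statement. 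For $b=1$ and $b=2$ several of your proposed triples violate the membership conditions of $F$: $(\ell^2,\chi,1)$ fails $mq_1\mid N$ (one gets $mq_1=\ell^3$ or $\ell^4$), the correct element being $(1,\chi,1)$; $(\ell^2,1,\chi)$ should be $(\ell^2,1,\overline{\chi})$ since one needs ${\rm cond}(\chi\xi_2\overline{\xi_1})=1$; and $(\ell,\xi,\chi\xi)$ gives $\chi\xi_2\overline{\xi_1}=\chi^2$, which is trivial only when $\chi$ is quadratic --- the correct family is $(\ell,(\chi\xi_2)_*,\xi_2)$. You need to redo this enumeration directly from the definition of $F$ (and then recompute $\#F_0$, the sum $\sum_{p\mid mq_1,\,(\xi_1\xi_2)_*(p)=1}1$, and the split of the product in Lemma \ref{lem:an0_N} into the $F_0$ and $F\setminus F_0$ parts) before the assembly step can be called routine.
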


\begin{proof}
As stated, assume that
$N=\ell^2$ for a prime $\ell$ and $q={\rm cond}(\chi)=\ell^b$ for some $b\in \{0, 1, 2\}$.
Then
\begin{equation*}
\rho = \begin{cases}
\left(1+\left(\frac{-1}{\ell}\right)\right)+\left(1+\left(\frac{-3}{\ell}\right)\right) & \text{ when } \ell\notin\{2, 3\}, \\
0 & \text{ otherwise.}
\end{cases}
\end{equation*}
Note that $\left(\frac{-1}{\ell}\right)=(-1)^{\frac{\ell-1}{2}}$ and $\left(\frac{-3}{\ell}\right)=\left(\frac{\ell}{3}\right)$.
We then have that
\begin{equation*}
\rho = \begin{cases}
4 & \text{ if } \ell\equiv 1\bmod{12}, \\
2 & \text{ if } \ell \equiv 5, 7 \bmod{12}, \\
0 & \text{ if } \ell\equiv 11 \bmod{12}.
\end{cases}
\end{equation*}
We also have that $\tau=\ell+1$,
$$
\tau_0 = \begin{cases} \ell+1 & \text{ if } b\in \{0, 1\}, \\ 2 & \text{ if } b=2 \end{cases}
$$
and
\begin{equation*}
2g-2
= \frac{(\ell-6)(\ell+1)}{6}
- \begin{cases} \frac{7}{3} & \text{ if } \ell \equiv 1\bmod{12}, \\
1 & \text{ if } \ell\equiv 5 \bmod{12}, \\
\frac{4}{3} & \text{ if } \ell \equiv 7\bmod{12}, \\
0 & \text{ if } \ell \equiv 11\bmod{12}.
\end{cases}
\end{equation*}
Let us now compute ${\rm ord}_R(0)$ and $|a_{n_0} d(1)|$ which are given in \eqref{e:an0_abs_N}.

To begin, we get that
\begin{equation*}
F 
= \begin{cases}
\left\{(1, 1, 1)\right\} \cup \left\{(\ell, \xi_1, \xi_1): \xi_1\text{ primitive, }{\rm cond}(\xi_1)\in \{1, \ell\}\right\}
\cup\left\{(\ell^2, 1, 1)\right\} & \text{ when } b=0, \\
\left\{(1, \chi, 1)\right\} \cup \left\{(\ell, (\chi\xi_2)_*,\xi_2):\, \xi_2 \text{ primitive, }{\rm cond}(\xi_2)\in \{1,\ell\}\right\}
\cup\left\{(\ell^2, 1, \overline{\chi})\right\}
& \text{ when } b=1, \\
\left\{(1, \chi, 1)\right\}\cup \left\{(\ell^2, 1, \overline{\chi}) \right\}& \text{ when } b=2,
\end{cases}
\end{equation*}
and
\begin{equation*}
F_ 0 =\begin{cases}
\left\{(1, 1, 1), (\ell, \left(\frac{\cdot}{\ell}\right), \left(\frac{\cdot}{\ell}\right)), (\ell, 1, 1), (\ell^2, 1, 1)\right\}  & \text{ when } b=0, \\
\left\{(\ell, \xi, \bar{\xi}):\, {\rm cond}(\xi)=\ell, \xi^2=\chi \right\} & \text{ when } b=1, \\
\emptyset & \text{ when } b=2.
\end{cases}
\end{equation*}
Note that $\#F_0 = 2$ when $b=1$ since we have two choices, namely an even character $\xi$ or an odd character $\xi$.
Then
\begin{equation*}
\sum_{(m, \xi_1, \xi_2)\in F} \sum_{\substack{p\mid mq_1\\ (\xi_1\xi_2)_*(p)=1}} 1
= \begin{cases} 2& \text{ when } b=0, \\
1 & \text{ when } b=1, \\
0 & \text{ when } b=2. \end{cases}
= 2-b
\end{equation*}
When combining with the above formulas for $2g-2$, $\rho$, $\tau$ and $\#F_0$, we can get \eqref{e:ordR0_ell2}.
We also have that
\begin{equation*}
G_{\ell^2, \ell^b} 
= \begin{cases}
\left\{1, \ell, \ell^2\right\} & \text{ when } b=0, 1, \\
\left\{1, \ell^2\right\} & \text{ when } b=2.
\end{cases}
\end{equation*}
Then, when applying to \eqref{e:an0_abs_N}, we have that
\begin{equation*}
|a_{n_0} d(1)|
= (2\pi^{\frac{3}{2}})^{-\# F} (\pi^2 3^{-1})^{\#F_0}
A_1(b) A_2(b) A_3(b)
\end{equation*}
where
\begin{align*}
A_1(b) = \prod_{m\in G_{N, q}} \bigg(\frac{N}{\gcd(m, N/m)}\bigg)^{\varphi(\gcd(m, N/m))}
= \begin{cases}
\ell^{4+\varphi(\ell)} & \text{ when } b\in \{0, 1\}, \\
\ell^4 & \text{ when } b=2,
\end{cases}
\end{align*}
with
\begin{align*}
A_2(b) &= \prod_{(m, \xi_1, \xi_2)\in F} \bigg(\frac{{\rm cond}(\xi_1)}{{\rm cond}(\xi_1\xi_2)^{\frac{1}{2}}}
\frac{\prod_{p\mid mq_1}(|1-(\xi_1\xi_2)_*(p)p^{-2}|)}{\prod_{p\mid mq_1, (\xi_1\xi_2)_*(p)\neq 1} |1-(\xi_1\xi_2)_*(p)| \prod_{p\mid mq_1, (\xi_1\xi_2)_*(p)=1}2\log p)}\bigg)
\\ &= \begin{cases}
\ell^{1+\frac{\ell-3}{2}}
\frac{(1-\ell^{-2})^3}{(2\log \ell)^3}
& \text{ if } b=0, \\
\ell^{\frac{3}{2}+\frac{\ell-5}{2}}
\frac{(1-\ell^{-2})^2}{(2\log\ell)^2}
& \text{ if } b=1, \\
1 & \text{ if } b=2
\end{cases}
\end{align*}
and
\begin{align*}
A_3 (b) = \prod_{(m, \xi_1, \xi_2)\in F\setminus F_0}
\frac{\left|L(2, (\xi_1\xi_2)_*)\right|}{\left|L(1, \overline{(\xi_1\xi_2)_*})\right|}
= \begin{cases}
\prod_{\xi\bmod{\ell}, \xi^2\neq 1} \frac{|L(2, \xi^2)|}{|L(1, \xi^2)|} & \text{ if } b=0, \\
\frac{|L(2, \chi)|^2}{|L(1, \chi)|^2}
\prod_{\xi\bmod{\ell}, \xi^2\neq \bar{\chi}} \frac{|L(2, \chi\xi^2)|}{|L(1, \chi\xi^2)|} & \text{ if } b=1, \\
\frac{|L(2, \chi)|^2}{|L(1, \chi)|^2} & \text{ if } b=2.
\end{cases}
\end{align*}
Finally, from \eqref{e:PNchi_def} with $N=\ell^2$ and $\ell$ is a prime, we get that
\begin{equation*}
P(N; \chi) = \begin{cases}
1 & \text{ if } b\in \{0, 1\}, \\
\prod_{\substack{a\bmod{\ell}\\ \gcd(a, \ell)=1}} 2\left|1-\chi(S_{a/\ell})\right|^{-1}
= \prod_{\substack{a\bmod{\ell}\\ \gcd(a, \ell)=1}} 2\left|1-\chi(1-a\ell)\right|^{-1}
& \text{ if } b=2.
\end{cases}
\end{equation*}

Applying all of the above computations to Proposition \ref{prop:limR_k=0_N}, we the proof of the corollary is completed.

\end{proof}

\thispagestyle{empty}
{\footnotesize
\bibliographystyle{amsalpha}
\bibliography{reference_Ruelle_Sept}
}

\vspace{3mm}
\noindent
Jay Jorgenson \\
 Department of Mathematics \\
 The City College of New York \\
 Convent Avenue at 138th Street \\
 New York, NY 10031 U.S.A. \\
 e-mail: jjorgenson@mindspring.com

 \vspace{3mm}
\noindent
Min Lee \\
 Department of Mathematics \\
University of Bristol\\
 e-mail: min.lee@bristol.ac.uk

 \vspace{3mm}
\noindent
Lejla Smajlovi\'{c} \\
 Department of Mathematics and Computer Sciences\\
 University of Sarajevo\\
 Zmaja od Bosne 35, 71 000 Sarajevo\\
 Bosnia and Herzegovina\\
 e-mail: lejlas@pmf.unsa.ba

\end{document}